\newtheorem{thm}{Theorem}[section]
\newtheorem{lem}[thm]{Lemma}
\newtheorem{prop}[thm]{Proposition}
\newtheorem{cor}[thm]{Corollary}
\newtheorem{rem}[thm]{Remark}
\newcommand {\B}    {\mathbb{B}}
\newcommand {\N}    {\mathbb{N}}
\newcommand {\esssp}  {\hbox{\rm ess-sp}\,}
\newcommand {\Ind}    {\hbox{\rm Ind}\,}
\begin{document}
\date{\today}
%\subjclass[2010]{Primary 53C17, 58A10}
\title{Algebras of Toeplitz operators on the \\ $n$-dimensional unit ball
\thanks{This work was partially supported by CONACYT Project 238630, M\'exico and by DFG  (Deutsche Forschungsgemeinschaft), Project BA 3793/4-1.}}
\author{\normalsize Wolfram Bauer\\ 
\normalsize Institut f\"{u}r Analysis, Leibniz Universit\"{a}t \\ 
\normalsize  30167, Hannover, Germany\\
\normalsize {\sf bauer@math.uni-hannover.de} \\ \vspace{1mm}\\
\normalsize Raffael Hagger\\ 
\normalsize Institut f\"{u}r Analysis, Leibniz Universit\"{a}t \\ 
\normalsize  30167, Hannover, Germany\\
\normalsize {\sf raffael.hagger@math.uni-hannover.de} \\ \vspace{1mm}\\
\normalsize Nikolai Vasilevski \\
\normalsize Departamento de Matem\'aticas, CINVESTAV \\
\normalsize Apartado Postal 14-740, 07000, M\'exico, D.F., M\'exico \\
\normalsize {\sf nvasilev@math.cinvestav.mx} }
\maketitle
\begin{abstract}
We study $C^*$-algebras generated by Toeplitz operators acting on the standard weighted Bergman space $\mathcal{A}_{\lambda}^2(\mathbb{B}^n)$ over the unit ball $\mathbb{B}^n$ in $\mathbb{C}^n$. 
The symbols $f_{ac}$ of generating operators are assumed to be of a certain product type, see (\ref{Introduction_form_of_the_symbol}). By choosing $a$ and $c$ in different function algebras $\mathcal{S}_a$ 
and $\mathcal{S}_c$  over lower dimensional unit balls $\mathbb{B}^{\ell}$ and $\mathbb{B}^{n-\ell}$, respectively,  and by assuming the invariance of $a\in \mathcal{S}_a$ under some torus action we obtain 
$C^*$-algebras $\boldsymbol{\mathcal{T}}_{\lambda}(\mathcal{S}_a, \mathcal{S}_c)$ whose structural properties can be described. In the case of $k$-quasi-radial functions $\mathcal{S}_a$ and bounded uniformly continuous 
or vanishing oscillation symbols $\mathcal{S}_c$ we describe the structure of elements from  the algebra $\boldsymbol{\mathcal{T}}_{\lambda}(\mathcal{S}_a, \mathcal{S}_c)$, derive a list of irreducible 
representations of $\boldsymbol{\mathcal{T}}_{\lambda}(\mathcal{S}_a, \mathcal{S}_c)$, and prove completeness of this list in some cases. Some of these representations originate from a ``quantization effect'', induced 
by the representation of $\mathcal{A}_{\lambda}^2(\mathbb{B}^n)$ as the direct sum of Bergman spaces over a lower dimensional unit ball with growing weight parameter. As an application we derive the 
essential spectrum and index formulas for matrix-valued operators. 
\vspace{1mm}\\
{\bf MSC:} primary: 47B35, 47L80; secondary: 32A36\\
{\bf keywords:} weighted Bergman spaces, operator $C^*$-algebra, irreducible representations
\end{abstract}

%%%%%%%%%%%%%%%%%%%%%%%%%%%%%%%%%%%%%%%%%%%%%%%%%%%%%%%%%%%%%%%%%%%%%%%%%%%%%%%%%%%
%%%%%%%%%%%%%%%%%%%%%%%%%%%%%%%%%%%%%%%%%%%%%%%%%%%%%%%%%%%%%%%%%%%%%%%%%%%%%%%%%%%
\section{Introduction}
%\section{Intro}
%%%%%%%%%%%%%%%%%%%%%%%%%%%%%%%%%%%%%%%%%%%%%%%%%%%%%%%%%%%%%%%%%%%%%%%%%%%%%%%%%%%%
The analysis of Banach  and $C^*$-algebras generated by Toeplitz operators acting on Bergman or Hardy spaces over a domain in $\mathbb{C}^n$ 
has a long history. Until today it is an area of active research (see \cite{AA,BV1,BV1-II,BV2,C,Le_2010, MiSuWi,XZ} and the references therein). The methods combine tools from functional analysis,  
complex analysis, algebra and, in the case of the present paper, they include recent asymptotic relations in deformation quantization in the sense of Rieffel \cite{BaHaVa, BV2}. 
In order to obtain manageable algebras one often starts with a generating family of Toeplitz operators $\mathcal{T}= \{T_f \: : \: f \in \mathcal{S}\}$, where $\mathcal{S}$ denotes a set of essentially bounded symbols with additional properties. By manageable algebras we mean operator $C^*$-algebras for which we can  
effectively describe their irreducible representations or the compact of their  
maximal ideals and the Gelfand transform, in the commutative case. For example, one may require some specific regularity or oscillatory behavior of elements in $\mathcal{S}$, or the invariance of $f \in \mathcal{S}$ under a Lie group action. If the resulting Banach algebras are commutative (e.g. see \cite{BV1,BV1-II, BV2, V1} for such cases), an explicit description of the maximal ideal space and the Gelfand transform provides some structural insight and, in particular, can be applied to calculate the spectrum of its elements. In the non-commutative case a complete classification of the irreducible representations of the algebra is of interest.  
\vspace{0.5ex}\par 
The present paper extends the results in \cite{BV2}. We study $C^*$-algebras generated by Toeplitz operators acting on the standard weighted Bergman spaces $\mathcal{A}_{\lambda}^2(\mathbb{B}^n)$ 
over the Euclidean unit ball $\mathbb{B}^n$ in $\mathbb{C}^n$. We represent $\mathcal{A}_{\lambda}^2(\mathbb{B}^n)$ as an infinite direct sum of Hilbert subspaces. 
Via a splitting of coordinates $z=(z^{\prime}, z^{\prime \prime}) \in \mathbb{C}^{\ell} \times \mathbb{C}^{n-\ell}$, we consider operator symbols of the form 
\begin{equation}\label{Introduction_form_of_the_symbol}
f_{ac}(z)  = a\left(\frac{z^{\prime}}{\sqrt{1-|z^{\prime\prime}|^2}}\right) c(z^{\prime\prime})\in L^{\infty}(\mathbb{B}^n), 
\end{equation}
with $a \in L^{\infty}(\mathbb{B}^{\ell})$ and $c \in L^{\infty} (\mathbb{B}^{n-\ell})$, respectively. If, in addition, $a$ is invariant under the diagonal action of a torus, then the above subspaces are invariant under the action of the 
Toeplitz operator $\boldsymbol{T}^{\lambda}_{f_{ac}}$ (after conjugation with a unitary operator). The restriction of $\boldsymbol{T}^{\lambda}_{f_{ac}}$ to each level 
acts as a tensor product of Toeplitz operators with symbols $a$ and $c$, respectively. These operators are  considered on differently weighted Bergman spaces over lower dimensional unit balls.  
Roughly speaking, we reduce the dimensionality of the problem. 
  
We then choose the functions $a$ and $c$ in specific symbol classes $\mathcal{S}_a$ and $\mathcal{S}_c$ and consider the $C^*$-algebra $\boldsymbol{\mathcal{T}}^{\lambda}(\mathcal{S}_a, \mathcal{S}_c)$ 
generated by all Toeplitz operators in $\{ \boldsymbol{T}_{f_{ac}}^{\lambda} \: : \: a \in \mathcal{S}_a, \: c \in \mathcal{S}_c\}$. In the paper \cite{BV2} we treated the lowest dimensional situation 
$n=2$ with $\mathcal{S}_c= C(\overline{\mathbb{D}})$ and $\mathcal{S}_a= L^{\infty}(0,1)$. The present work extends these results to arbitrary dimensions (which is rather straightforward)  and 
$k$-quasi-radial functions $\mathcal{S}_a=L_{k\textup{-}qr}^{\infty}(\mathbb{B}^{n-\ell})$. As for $\mathcal{S}_c$, we consider bounded uniformly continuous symbols  with respect to the Bergman metric 
and its subclass of symbols having vanishing oscillation at the boundary (which is significantly larger than $C(\overline{\mathbb{B}}^{n-\ell})$). We describe irreducible representations 
of the algebras $\boldsymbol{\mathcal{T}}^{\lambda}(\mathcal{S}_a, \mathcal{S}_c)$ and prove completeness of our list in different cases. Some of the representations arise via a ``quantization effect'' linked 
to the observation that in the above mentioned infinite orthogonal decomposition of  $\mathcal{A}_{\lambda}^2(\mathbb{B}^n)$ Bergman spaces with arbitrary large weight parameter appear. We characterize 
the quotient of the algebras by the ideal of compact operators. As as result the essential spectrum of elements in $\boldsymbol{\mathcal{T}}^{\lambda}(\mathcal{S}_a, \mathcal{S}_c)$  and an index formula 
in the case of matrix-valued Fredholm operators are derived. 
\vspace{0.5ex}\par
 The paper is organized as follows. In Section \ref{Section_1} we describe an isometric isomorphism between $\mathcal{A}_{\lambda}^2(\mathbb{B}^n)$ and an infinite orthogonal sum of Hilbert space tensor 
products involving differently weighted Bergman spaces over lower dimensional balls. \par 
In Section \ref{Section_3} we consider Toeplitz operators ${\bf T}_{f_{ac}}^{\lambda}$ acting on $\mathcal{A}_{\lambda}^2(\mathbb{B}^n)$ with symbols $f_{ac}$ of the form (\ref{Introduction_form_of_the_symbol}).  
The main observations are Proposition \ref{propositoin_decomposition_TO_form_3_3} and its Corollary \ref{co:f_ac} which state that under an invariance property of the operator 
symbol $a$ under a torus action and, up to unitary equivalence, ${\bf T}_{f_{ac}}^{\lambda}$ decomposes into an infinite sum of tensor products of operators. 
Each factor is a Toeplitz operator again acting on a (differently) weighted Bergman space over a lower dimensional unit ball. 
\par
Section \ref{Section_4} analyzes the $C^*$-algebra $\boldsymbol{\mathcal{T}}_{\lambda}(1,{\rm BUC}(\mathbb{B}^{n-\ell}))$, generated by Toeplitz operators ${\bf T}_{f_{c}}^{\lambda}$ with symbols $c$ in $\textup{BUC}(\mathbb{B}^{n-\ell})$, the bounded uniformly continuous functions  on  $\mathbb{B}^{n-\ell}$ with  respect to the Bergman metric. Theorem \ref{th:rep} provides a unique infinite sum representation for the elements of this algebra. This representation is based on the decomposition of the Bergman space in Section \ref{Section_1}. We compare and identify different $C^*$-algebras that naturally appear in the construction. As a main result we construct  families of irreducible representations of the algebra. A class of one-dimensional representations arises via a ``quantization effect'', i.e.~when the weight parameter tends to infinity. We note that the latter representations do not appear for the $C^*$-algebra generated by Toeplitz operators with uniformly continuous symbols on the ball $\mathbb{B}^n$ of full dimension. 
\par 
In Section \ref{se:VO} we consider the $C^*$-algebra  generated by Toeplitz operators ${\bf T}_{f_{c}}^{\lambda}$ having symbols $c$ in the space $\textup{VO}_{\partial}(\mathbb{B}^{n-\ell})$ 
of functions having vanishing oscillation at the boundary of $\mathbb{B}^{n-\ell}$. Since elements in $\textup{VO}_{\partial}(\mathbb{B}^{n-\ell})$ are bounded and uniformly continuous 
we obtain a subalgebra of   $\boldsymbol{\mathcal{T}}_{\lambda}(1,{\rm BUC}(\mathbb{B}^{n-\ell}))$. As an additional feature, we use the compactness of the semi-commutator for two Toeplitz operators with symbols 
in $\textup{VO}_{\partial}(\mathbb{B}^{n-\ell})$. The restriction of irreducible representations in Section  \ref{Section_4} to this subalgebra is shown to be a complete list of all such representations 
(cf. Theorem \ref{th:rep_list}). In the case of matrix-valued symbols of vanishing oscillation and based on recent results in \cite{XZ} we discuss the Fredholm property and index in Corollary \ref{Corollary_Fredholm_property_and_index}. 
\par 
Section \ref{Section_6} is concerned with the more complicated case of the $C^*$-algebra
\[\boldsymbol{\mathcal{T}}_{\lambda}(L^{\infty}_{k\textup{-}qr}, {\rm VO}_{\partial}(\mathbb{B}^{n-\ell}))\]
which is generated by Toeplitz operators ${\bf T}_{f_{ac}}^{\lambda}$ with $k$-quasi-radial symbols $a$ on $\mathbb{B}^{\ell}$ and symbols $c\in \textup{VO}_{\partial}(\mathbb{B}^{n-\ell})$.  In the spirit of 
Theorem \ref{th:rep_VO} we first derive a representation of the elements $\boldsymbol{T}^{\lambda}$ in this algebra in form of an infinite sum of tensor products. Differently from Section 
\ref{se:VO} this representation is not uniquely determined by $\boldsymbol{T}^{\lambda}$. Up to elements in a small ideal  we can assign to $\boldsymbol{T}^{\lambda}$  a ''symbol'' in 
${\rm SO}(\mathbb{Z}_+^m) \otimes {\rm VO}_{\partial}(\mathbb{B}^{n-\ell})$. Here ${\rm SO}(\mathbb{Z}_+^m)$ denotes a space of sequences with slow oscillation at infinity  (in a certain sense). 
A complete list of irreducible representations of  the algebra $\boldsymbol{\mathcal{T}}_{\lambda}(L^{\infty}_{k\textup{-}qr}, {\rm VO}_{\partial}(\mathbb{B}^{n-\ell}))$ is given in 
Theorem \ref{Section_6_complete_list_of_representations}. 
Finally we express the essential spectrum, characterize Fredholmness and provide an index formula for matrix-valued 
operators in $ \boldsymbol{\mathcal{T}}_{\lambda}(L^{\infty}_{k\textup{-}qr},{\rm VO}_{\partial}(\mathbb{B}^{n-\ell})) \otimes \mathrm{Mat}_p(\mathbb{C})$. 
%%%%%%%%%%%%%%%%%%%%%%%%%%%%%%%%%%%%%%%%%%%%%%%%%%%%%%%%%%%%%%%%%%%%%%%%%%%%%%%%%%%%
\section{Bergman space representation}
\label{Section_1}
\setcounter{equation}{0}
%%%%%%%%%%%%%%%%%%%%%%%%%%%%%%%%%%%%%%%%%%%%%%%%%%%%%%%%%%%%%%%%%%%%%%%%%%%%%%%%%%%%%%
Consider the open unit ball in $\mathbb{C}^n$: 
$$\mathbb{B}^n:=\big{\{}z=(z_1, \cdots, z_n)\in \mathbb{C}^n \: : \: |z|^2=|z_1|^2+\cdots+|z_n|^2<1\big{\}}.$$ 
For $\lambda >-1$ introduce the weighted Bergman space 
\begin{equation}\label{Def_weighted_Bergman_space}
\mathcal{A}_{\lambda}^2(\mathbb{B}^n):= \big{\{} f : \mathbb{B}^n \rightarrow \mathbb{C} \: : \: f \ \text{holomorphic and } v_{\lambda} \text{-square integrable} \big{\}},
\end{equation}
where the weighted measure $v_{\lambda}$ is absolutely continuous with respect to the Lebesgue volume form $dv(z)$ on $\mathbb{B}^n$ and given by: 
\begin{equation*}
dv_{\lambda}(z)=c_{\lambda}^{(n)} (1-|z|^2)^{\lambda} dv(z) \hspace{3ex} \text{ with} \hspace{3ex} c_{\lambda}^{(n)}:= \frac{\Gamma(n+\lambda+1)}{\pi^n \Gamma(\lambda+1)}. 
\end{equation*}
The inner product on $L^2(\mathbb{B}^n, dv_{\lambda})$ or $\mathcal{A}_{\lambda}^2(\mathbb{B}^n)$ is denoted by $\langle \cdot, \cdot \rangle_{\lambda}$. 
Let $\mathbb{Z}_+=\{0,1,2, \cdots \}$, and recall that the standard orthonormal basis of (\ref{Def_weighted_Bergman_space}) has the form: 
\begin{equation*}
\mathcal{E}:= [e^{\lambda}_{\alpha}(z) \: : \: \alpha \in \mathbb{Z}_+^n]\hspace{4ex} \text{with} \hspace{4ex} 
e^{\lambda}_{\alpha}(z)= \sqrt{\frac{\Gamma(n+|\alpha|+\lambda+1)}{\alpha! \Gamma(n+\lambda+1)}}z^{\alpha}. 
\end{equation*}
\par 
In what follows, we work with a certain orthogonal decomposition of the weighted Bergman space into an infinite orthogonal sum of Hilbert subspaces. In more detail:

With $\ell \in \{1, \cdots, n-1\}$ we divide the coordinates $(z_1, \cdots, z_n) \in \mathbb{C}^n$ into two parts: 
\begin{equation}\label{GL_separation_of_variables}
z=(\underbrace{z_1, \cdots, z_{\ell}}_{=:z^{\prime}}, \underbrace{z_{\ell +1}, \cdots ,z_n}_{=:z^{\prime \prime}})=(z^{\prime}, z^{\prime \prime}) \in \mathbb{C}^{\ell} \times \mathbb{C}^{n-\ell}.
\end{equation}
Let $m \leq \ell$, and fix a multi-index $k =(k_1, \cdots, k_m) \in \mathbb{N}^m$ with $|k|=k_1+\cdots +k_m=\ell$. We further divide $z^{\prime}\in \mathbb{C}^{\ell}$ into 
$m$ groups of coordinates as follows: 
\begin{equation}\label{definition_z_prime}
z^{\prime}=\big{(}z_{(1)}, \cdots, z_{(m)}\big{)}, \hspace{2ex} \text{where} \hspace{2ex} z_{(j)}
=\big{(} z_{k_1+\cdots +k_{j-1}+1}, \cdots , z_{k_1+ \cdots +k_j}\big{)}\in \mathbb{C}^{k_j},
\end{equation}
where $k_0 := 0$. For each $\rho=(\rho_1, \cdots, \rho_m)\in \mathbb{Z}_+^m$ we introduce the Hilbert space
\begin{equation*}
H_{\rho}:= \overline{\textup{span}}\left\{ e_{\alpha}^{\lambda} \, : \, \alpha=(\alpha_{(1)}, \cdots, \alpha_{(m)}, \alpha^{\prime \prime}) 
\in \mathbb{Z}_+^n \ \ \text{and} \ \ |\alpha_{(j)}|=\rho_j, \ \ j =1,...,m  \right\}.
\end{equation*}
Then we have the following orthogonal decomposition 
\begin{equation}\label{Orthogonal_decomposition_of_the_Bergman_space}
\mathcal{A}_{\lambda}^2(\mathbb{B}^n)=\bigoplus_{\rho \in \mathbb{Z}_+^m} H_{\rho}. 
\end{equation}
A similar orthogonal decomposition can be done for the Bergman space $\mathcal{A}^2_{\lambda}(\mathbb{B}^{\ell})$, whose orthogonal basis consists of the monomials  
\begin{equation*}
 e^{\lambda}_{\alpha^{\prime}}(w) = \sqrt{\frac{\Gamma(\ell+|\alpha^{\prime}|+\lambda+1)} {(\alpha^{\prime})! \,\Gamma(\ell+\lambda+1)}}\, w^{\alpha^{\prime}}, \hspace{2ex} 
 \mbox{where } \hspace{2ex} \alpha^{\prime} \in \mathbb{Z}^{\ell}_+.
\end{equation*}
Namely, we have
\begin{equation*}
 \mathcal{A}^2_{\lambda}(\mathbb{B}^{\ell}) = \bigoplus_{\rho \in \mathbb{Z}^m_+} \mathscr{H}_{\rho},
\end{equation*}
where, for each $\rho = (\rho_1,...,\rho_m) \in \mathbb{Z}^m_+$, the finite dimensional space $\mathscr{H}_{\rho}$ is defined as
\begin{equation}\label{defn_mathcal_H}
 \mathscr{H}_{\rho} = \textup{span}\left\{ e_{\alpha^{\prime}}^{\lambda} \, : \, \alpha^{\prime}=(\alpha_{(1)}, \cdots, \alpha_{(m)}) 
\in \mathbb{Z}_+^{\ell} \ \ \text{and} \ \ |\alpha_{(j)}|=\rho_j, \ \forall \: j =1,...,m  \right\}.
\end{equation}
For each $p \in \mathbb{Z}_+$, we also introduce the Bergman space $\mathcal{A}^2_{\lambda+p+\ell}(\mathbb{B}^{n-\ell})$ with basis
\begin{equation*}
 e^{\lambda+p+\ell}_{\alpha^{\prime\prime}}(\zeta)
 =\sqrt{\frac{\Gamma(n+|\alpha^{\prime\prime}|+\lambda+p+1)} {(\alpha^{\prime\prime})! \,\Gamma(n+\lambda+p+1)}}\, \zeta^{\alpha^{\prime\prime}},  \hspace{3ex} 
 \alpha^{\prime\prime} \in \mathbb{Z}^{n-\ell}_+. 
\end{equation*}
Given a tuple $\rho = (\rho_1,...,\rho_m) \in \mathbb{Z}^m_+$, we introduce then the linear mapping
\begin{equation*}
 u_{\rho} \, : \ H_{\rho} \ \ \longrightarrow \ \ \mathscr{H}_{\rho} \otimes \mathcal{A}^2_{\lambda+|\rho|+\ell}(\mathbb{B}^{n-\ell}),
\end{equation*}
defined on the basis elements $e^{\lambda}_{\alpha}$, $\alpha = (\alpha^{\prime}, \alpha^{\prime\prime}) \in \mathbb{Z}^{\ell}_+ \times \mathbb{Z}^{n-\ell}_+$, of $H_{\rho}$ as follows
\begin{equation} \label{eq:u_rho}
 u_{\rho} \, : \ e^{\lambda}_{\alpha} \ \ \longmapsto \ \ e^{\lambda}_{\alpha^{\prime}} \otimes e^{\lambda+|\rho|+\ell}_{\alpha^{\prime\prime}}.
\end{equation}
By definition this mapping is an isometric isomorphism. 
\vspace{1ex}\par 
Now introduce the Hilbert space 
\begin{equation} \label{eq:caligraphic_H}
 \mathcal{H} = \bigoplus_{\rho \in \mathbb{Z}^m_+} \mathcal{H}_{\rho},\quad \text{with} \quad \mathcal{H}_{\rho} = \mathscr{H}_{\rho} \otimes \mathcal{A}^2_{\lambda+|\rho|+\ell}(\mathbb{B}^{n-\ell})
\end{equation}
and the unitary operator
\begin{equation} \label{eq:U}
 U = \bigoplus_{\rho \in \mathbb{Z}^m_+} u_{\rho} \, : \ \mathcal{A}_{\lambda}^2(\mathbb{B}^n)=\bigoplus_{\rho \in \mathbb{Z}_+^m} H_{\rho} \ \longrightarrow \ \mathcal{H} = \bigoplus_{\rho \in \mathbb{Z}^m_+} \mathscr{H}_{\rho} \otimes \mathcal{A}^2_{\lambda+|\rho|+\ell}(\mathbb{B}^{n-\ell}),
\end{equation}
acting componentwise according to the direct sum decomposition. Then the above discussion leads to the following proposition.

\begin{prop}\label{Proposition_decomposition_Bergman_space}
 The unitary operator $U$ gives an isometric isomorphism between the spaces
\begin{equation*}
 \mathcal{A}_{\lambda}^2(\mathbb{B}^n)=\bigoplus_{\rho \in \mathbb{Z}_+^m} H_{\rho}
\quad {\rm and} \quad \mathcal{H} = \bigoplus_{\rho \in \mathbb{Z}^m_+} \mathscr{H}_{\rho} \otimes \mathcal{A}^2_{\lambda+|\rho|+\ell}(\mathbb{B}^{n-\ell}).
\end{equation*}
\end{prop}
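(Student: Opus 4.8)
The plan is to reduce the assertion to a single index and to verify that each $u_\rho$ carries an orthonormal basis of $H_\rho$ bijectively onto an orthonormal basis of $\mathcal{H}_\rho = \mathscr{H}_\rho \otimes \mathcal{A}^2_{\lambda+|\rho|+\ell}(\mathbb{B}^{n-\ell})$. Since $U=\bigoplus_{\rho\in\mathbb{Z}_+^m} u_\rho$ acts diagonally with respect to the orthogonal decomposition (\ref{Orthogonal_decomposition_of_the_Bergman_space}) and its counterpart (\ref{eq:caligraphic_H}), and since an orthogonal sum of isometric isomorphisms is again an isometric isomorphism, it suffices to fix $\rho$ and treat $u_\rho$ alone.

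First I would record orthonormal bases on both sides. The monomials $e^\lambda_\alpha$, $\alpha\in\mathbb{Z}_+^n$, are pairwise orthogonal in $\mathcal{A}_\lambda^2(\mathbb{B}^n)$ --- this follows from the invariance of the weight $(1-|z|^2)^\lambda\,dv(z)$ under the diagonal torus action $z\mapsto(t_1z_1,\dots,t_nz_n)$, $|t_j|=1$ --- and they are normalized by construction, so $\mathcal{E}$ is an orthonormal basis. Hence its subfamily indexed by $\{\alpha : |\alpha_{(j)}|=\rho_j,\ j=1,\dots,m\}$ is, by the very definition of $H_\rho$, an orthonormal basis of $H_\rho$. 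For the same reason $\{e^\lambda_{\alpha'} : \alpha'\in\mathbb{Z}_+^\ell\}$ and $\{e^{\lambda+|\rho|+\ell}_{\alpha''} : \alpha''\in\mathbb{Z}_+^{n-\ell}\}$ are orthonormal bases of $\mathcal{A}_\lambda^2(\mathbb{B}^\ell)$ and $\mathcal{A}^2_{\lambda+|\rho|+\ell}(\mathbb{B}^{n-\ell})$; restricting the first to the constraint $|\alpha_{(j)}|=\rho_j$ gives an orthonormal basis of the finite dimensional space $\mathscr{H}_\rho$ of (\ref{defn_mathcal_H}). Invoking the standard fact that elementary tensors of orthonormal bases form an orthonormal basis of the Hilbert space tensor product, the vectors $e^\lambda_{\alpha'}\otimes e^{\lambda+|\rho|+\ell}_{\alpha''}$, ranging over $|\alpha_{(j)}|=\rho_j$ and $\alpha''\in\mathbb{Z}_+^{n-\ell}$, constitute an orthonormal basis of $\mathcal{H}_\rho$.

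Next I would exhibit the bijection of index sets. The coordinate splitting $\alpha=(\alpha',\alpha'')\in\mathbb{Z}_+^\ell\times\mathbb{Z}_+^{n-\ell}$ identifies $\{\alpha\in\mathbb{Z}_+^n : |\alpha_{(j)}|=\rho_j\}$ with $\{\alpha'\in\mathbb{Z}_+^\ell : |\alpha_{(j)}|=\rho_j\}\times\mathbb{Z}_+^{n-\ell}$, and under this identification the defining rule (\ref{eq:u_rho}) matches the two orthonormal bases term by term. A linear map sending one orthonormal basis bijectively onto another extends uniquely to an isometric isomorphism of the underlying Hilbert spaces; thus $u_\rho$ is an isometric isomorphism, and assembling $U=\bigoplus_\rho u_\rho$ as in (\ref{eq:U}) yields the claim.

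The main point to watch is not analytic but combinatorial: one must check that the constraint $|\alpha_{(j)}|=\rho_j$ for $j=1,\dots,m$ passes verbatim from the full multi-index $\alpha$ to its truncation $\alpha'$, so that the two index sets coincide and $u_\rho$ is onto as well as isometric. I expect no genuine obstacle, since the normalization constants are absorbed into the basis vectors and no identity among the Gamma factors needs to be verified; a direct computation of $\langle e^\lambda_\alpha, e^\lambda_\beta\rangle_\lambda$ through the Beta integral would reproduce the factorization of weights explicitly, but the orthonormal-basis argument renders this superfluous.
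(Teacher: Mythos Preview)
Your proposal is correct and follows exactly the paper's approach: the paper simply states that $u_\rho$ ``by definition \ldots\ is an isometric isomorphism'' (since it sends an orthonormal basis to an orthonormal basis) and then declares the proposition an immediate consequence of the preceding construction. You have merely unpacked this one-line justification in full detail.
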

%%%%%%%%%%%%%%%%%%%%%%%%%%%%%%%%%%%%%%%%%%%%%%%%%%%%%%%%%%%%%%%%%%%%%%%%%%%%%%%%%%%%
\section{Toeplitz operators with invariant subspaces} \label{se:inv-subspaces}
\setcounter{equation}{0}
\label{Section_3}
%%%%%%%%%%%%%%%%%%%%%%%%%%%%%%%%%%%%%%%%%%%%%%%%%%%%%%%%%%%%%%%%%%%%%%%%%%%%%%%%%%%%%
Given a function $f \in L^{\infty}(\mathbb{B}^n)$, we write ${\bf T}_f^{\lambda}$ for the Toeplitz operator with symbol $f$ acting on $\mathcal{A}_{\lambda}^2(\mathbb{B}^n)$. 
More precisely, let 
\begin{equation*}
{\bf P}_{\lambda}: L^2(\mathbb{B}^n, dv_{\lambda}) \longrightarrow \mathcal{A}^2_{\lambda}(\mathbb{B}^n)
\end{equation*}
be the orthogonal projection (Bergman projection). Then ${\bf T}_f^{\lambda}h$ is defined by 
\begin{equation*}
({\bf T}_f^{\lambda}h)(z):= {\bf P}_{\lambda}(fh)(z)= \int_{\mathbb{B}^n} \frac{f(w)h(w)}{(1- z \cdot  \overline{w} )^{\lambda+n+1}} dv_{\lambda}(w), \hspace{4ex}  h \in \mathcal{A}^2_{\lambda}(\mathbb{B}^n). 
\end{equation*}

Throughout the paper we simultaneously use different weighted Bergman spaces on the unit balls of various dimensions, as well as the corresponding Toeplitz operators. To distinguish them, we will always write in {\bf bold} the Toeplitz operators ${\bf T}_f^{\lambda}$ that act on the Bergman space on the ball of the maximal dimension $n$, while we will use the standard font for Toeplitz operators $T_g^{\mu}$ that act on the Bergman space on the ball of a smaller dimension (of dimension $\ell$ or $n-\ell$, in the majority of cases).

\medskip
We start with an auxiliary lemma.

\begin{lem} \label{le:different}
 Let $w = (w^\prime,w^{\prime\prime}) \in \mathbb{B}^k$, and let a function $g \in L^{\infty}(\mathbb{B}^k)$ be invariant under the following action of $\mathbb{T}$ on $\mathbb{B}^k$:
\begin{equation*}
  t \in  \mathbb{T} \, : \ (w^\prime,w^{\prime\prime}) \ \longmapsto \ (tw^\prime,w^{\prime\prime}).
\end{equation*}
Then for multi-indices $\alpha = (\alpha^\prime,\alpha^{\prime\prime})$ and 
$\beta = (\beta^\prime,\beta^{\prime\prime})$ with $|\alpha^\prime| \neq |\beta^\prime|$ we have that 
\begin{equation*}
 \langle g w^{\alpha}, w^{\beta}\rangle_{\mu} = 0.
\end{equation*}
\end{lem}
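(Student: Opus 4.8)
The plan is to exploit the $\mathbb{T}$-invariance of $g$ together with the rotation invariance of the weighted measure by a standard symmetry/averaging argument. Writing the inner product explicitly, we have
\[
 \langle g w^{\alpha}, w^{\beta}\rangle_{\mu} = c_{\mu}^{(k)}\int_{\mathbb{B}^k} g(w)\, w^{\alpha}\, \overline{w^{\beta}}\,(1-|w|^2)^{\mu}\, dv(w),
\]
and I would compare this integral with the one obtained after the substitution $w^\prime \mapsto t w^\prime$ for a fixed $t \in \mathbb{T}$.

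First I would verify that this substitution leaves the measure $dv_{\mu}$ unchanged: the Lebesgue volume form $dv$ on $\mathbb{C}^{k}$ is invariant under the unitary map $(w^\prime,w^{\prime\prime}) \mapsto (tw^\prime,w^{\prime\prime})$, and the weight $(1-|w|^2)^{\mu}$ depends only on $|w|^2 = |w^\prime|^2+|w^{\prime\prime}|^2$, which is preserved since $|tw^\prime|=|w^\prime|$. By hypothesis $g$ is invariant under the same map, so $g(tw^\prime,w^{\prime\prime}) = g(w^\prime,w^{\prime\prime})$. On the other hand the monomials transform by a pure phase: $(tw^\prime)^{\alpha^\prime} = t^{|\alpha^\prime|}(w^\prime)^{\alpha^\prime}$ and $\overline{(tw^\prime)^{\beta^\prime}} = \bar t^{\,|\beta^\prime|}\,\overline{(w^\prime)^{\beta^\prime}}$, while the $w^{\prime\prime}$-factors are untouched. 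Since $|t|=1$ we have $t^{|\alpha^\prime|}\bar t^{\,|\beta^\prime|} = t^{|\alpha^\prime|-|\beta^\prime|}$, so the change of variables yields the identity
\[
 \langle g w^{\alpha}, w^{\beta}\rangle_{\mu} = t^{\,|\alpha^\prime|-|\beta^\prime|}\,\langle g w^{\alpha}, w^{\beta}\rangle_{\mu}, \qquad t \in \mathbb{T}.
\]

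To conclude, I would integrate this identity against the normalized Haar measure on $\mathbb{T}$ (equivalently, average over $t = e^{i\theta}$, $\theta \in [0,2\pi)$). Since $|\alpha^\prime| \neq |\beta^\prime|$ the exponent $|\alpha^\prime|-|\beta^\prime|$ is a nonzero integer, so $\int_{\mathbb{T}} t^{\,|\alpha^\prime|-|\beta^\prime|}\, dt = 0$, which forces $\langle g w^{\alpha}, w^{\beta}\rangle_{\mu} = 0$. Alternatively one may simply pick a single $t \in \mathbb{T}$ with $t^{\,|\alpha^\prime|-|\beta^\prime|} \neq 1$ and divide. I do not expect any serious obstacle here; the only point requiring genuine care is the invariance of $dv_{\mu}$ under the rotation of the $w^\prime$-block, which is immediate once one observes that the weight is radial in $|w|$.
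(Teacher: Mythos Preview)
Your argument is correct and essentially identical to the paper's own proof: both use the change of variables $w'\mapsto tw'$, the invariance of $g$ and of $dv_{\mu}$, to obtain $\langle g w^{\alpha}, w^{\beta}\rangle_{\mu} = t^{\,|\alpha'|-|\beta'|}\langle g w^{\alpha}, w^{\beta}\rangle_{\mu}$ for all $t\in\mathbb{T}$, and then conclude. The only cosmetic differences are a sign in the exponent (the paper writes $e^{i(|\beta'|-|\alpha'|)\theta}$ due to the direction of the substitution) and that the paper simply notes the identity holds for all $\theta$ rather than explicitly averaging.
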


\begin{proof}
Let $e^{i\theta} = t \in \mathbb{T}$, we calculate
\begin{eqnarray*}
\langle g w^{\alpha}, w^{\beta}\rangle_{\mu} &:=&
 c^{(k)}_{\mu} \int_{\mathbb{B}^k} g(w^\prime,w^{\prime\prime}) w^{\alpha}\overline{w}^{\beta} (1-|w|^2)^{\mu} dv(w) \\
 &=& c^{(k)}_{\mu} \int_{\mathbb{B}^k} g(e^{i\theta}w^\prime,w^{\prime\prime}) w^{\alpha}\overline{w}^{\beta} (1-|w|^2)^{\mu} dv(w) \\
 &=& c^{(k)}_{\mu} \int_{\mathbb{B}^k} g(\zeta^\prime,\zeta^{\prime\prime}) \zeta^{\alpha}\overline{\zeta}^{\beta} e^{i(|\beta^\prime|-|\alpha^\prime|)\theta}(1-|\zeta|^2)^{\mu} dv(\zeta) \\
 &=& e^{i(|\beta^\prime|-|\alpha^\prime|)\theta} \langle g w^{\alpha}, w^{\beta}\rangle_{\mu},
\end{eqnarray*}
here we made a change of variables: $\zeta^{\prime} = e^{i\theta}w^\prime$ and $\zeta^{\prime\prime} = w^{\prime\prime}$. As this holds for all $\theta \in [0,2\pi)$, we get $\langle g w^{\alpha}, w^{\beta}\rangle_{\mu} = 0$.
\end{proof}
{We now present a class of Toeplitz operators on $\mathcal{A}_{\lambda}^2(\mathbb{B}^n)$ which leaves all spaces $H_{\rho}$ in the  decomposition (\ref{Orthogonal_decomposition_of_the_Bergman_space}) invariant. For this we use the separation of coordinates (\ref{GL_separation_of_variables}) and \eqref{definition_z_prime}.} 
\begin{prop} \label{prop:invariant}
 Let $f \in L^{\infty}(\mathbb{B}^n)$ be invariant under the action (\ref{eq:action}) of the group $\mathbb{T}^m \times I \cong 
 \mathbb{T}^m$ on $\mathbb{B}^n$:
\begin{equation} \label{eq:action}
 (t_1,..., t_m) \in \mathbb{T}^m \, : \ (z_{(1)}, \ldots, z_{(m)},\,z^{\prime \prime}) \ \ \longmapsto \ \ (t_1z_{(1)}, \ldots, t_mz_{(m)},\,z^{\prime \prime}). 
\end{equation}
Then the Toeplitz operator ${\bf T}_f^{\lambda}$, acting on $\mathcal{A}^2_{\lambda}(\mathbb{B}^n)$, leaves all the spaces $H_{\rho}$ in the orthogonal decomposition {\rm (\ref{Orthogonal_decomposition_of_the_Bergman_space})} invariant. 
\end{prop}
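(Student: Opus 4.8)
The plan is to verify the invariance at the level of matrix coefficients with respect to the orthonormal basis $\mathcal{E}$ and then to invoke Lemma \ref{le:different} componentwise. To prove that ${\bf T}_f^{\lambda}$ maps $H_{\rho}$ into $H_{\rho}$, it suffices by continuity of ${\bf T}_f^{\lambda}$ and closedness of the spaces $H_{\sigma}$ to show that for every basis vector $e_{\alpha}^{\lambda} \in H_{\rho}$ and every $e_{\beta}^{\lambda} \in H_{\sigma}$ with $\sigma \neq \rho$ one has $\langle {\bf T}_f^{\lambda} e_{\alpha}^{\lambda}, e_{\beta}^{\lambda}\rangle_{\lambda} = 0$. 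Indeed, this forces ${\bf T}_f^{\lambda} e_{\alpha}^{\lambda}$ to be orthogonal to every $H_{\sigma}$ with $\sigma \neq \rho$, whence ${\bf T}_f^{\lambda} e_{\alpha}^{\lambda} \in H_{\rho}$ by the orthogonal decomposition \eqref{Orthogonal_decomposition_of_the_Bergman_space}, and the claim for all of $H_{\rho}$ then follows by passing to closed linear combinations.

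First I would rewrite the matrix coefficient. Since ${\bf P}_{\lambda}$ is the orthogonal projection onto $\mathcal{A}_{\lambda}^2(\mathbb{B}^n)$ and $e_{\beta}^{\lambda}$ lies in that space, the projection can be dropped:
\[
 \langle {\bf T}_f^{\lambda} e_{\alpha}^{\lambda}, e_{\beta}^{\lambda}\rangle_{\lambda} = \langle {\bf P}_{\lambda}(f e_{\alpha}^{\lambda}), e_{\beta}^{\lambda}\rangle_{\lambda} = \langle f e_{\alpha}^{\lambda}, e_{\beta}^{\lambda}\rangle_{\lambda}.
\]
Up to the positive normalization constants of the basis elements, this equals a positive multiple of $\langle f z^{\alpha}, z^{\beta}\rangle_{\lambda}$, so it suffices to show the latter vanishes.

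Next, because $\sigma \neq \rho$, there is an index $j \in \{1, \ldots, m\}$ with $\rho_j \neq \sigma_j$, that is, $|\alpha_{(j)}| \neq |\beta_{(j)}|$. Setting all torus parameters except the $j$-th equal to $1$ in the action \eqref{eq:action}, the invariance of $f$ reduces to invariance under the single circle acting only on the block $z_{(j)}$. After relabeling the coordinates so that $z_{(j)}$ plays the role of $w^\prime$ and the remaining coordinates $(z_{(1)}, \ldots, z_{(j-1)}, z_{(j+1)}, \ldots, z_{(m)}, z^{\prime\prime})$ play the role of $w^{\prime\prime}$ (which leaves the ball and the weighted measure invariant), this is precisely the hypothesis of Lemma \ref{le:different} with $k = n$ and $\mu = \lambda$. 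Since $|\alpha_{(j)}| \neq |\beta_{(j)}|$, that lemma gives $\langle f z^{\alpha}, z^{\beta}\rangle_{\lambda} = 0$, completing the verification.

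The argument is essentially bookkeeping, and I expect no genuine analytic obstacle, since the key orthogonality was already isolated in Lemma \ref{le:different}. The only point that requires a little care is the reduction from the full $\mathbb{T}^m$-invariance to invariance under a single circle factor, together with the accompanying coordinate reordering; this is exactly what allows one to quote the auxiliary lemma verbatim for the distinguished block $z_{(j)}$.
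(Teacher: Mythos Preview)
Your proof is correct and follows essentially the same route as the paper's own argument: both reduce the invariance of $H_{\rho}$ to the vanishing of the matrix coefficients $\langle {\bf T}_f^{\lambda} z^{\alpha}, z^{\beta}\rangle_{\lambda}$ whenever some block satisfies $|\alpha_{(j)}| \neq |\beta_{(j)}|$, and both obtain this vanishing from Lemma~\ref{le:different}. The paper's version is terser---it simply asserts that the lemma yields the needed orthogonality---whereas you spell out explicitly the reduction from $\mathbb{T}^m$-invariance to a single circle factor and the accompanying coordinate relabeling; this extra care is welcome but does not constitute a different approach.
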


\begin{proof}
Indeed, Lemma \ref{le:different} implies that $\langle {\bf T}_f^{\lambda} z^{\alpha},
z^{\beta} \rangle_{\lambda} = 0$ for all multi-indices
\begin{equation*}
 \alpha = (\alpha_{(1)}, \ldots, \alpha_{(m)},\,\alpha^{\prime \prime}) \quad \text{and} \quad \beta = (\beta_{(1)}, \ldots, \beta_{(m)},\,\beta^{\prime \prime}),
\end{equation*}
for which there exists $j \in \{1,2,...,m\}$ such that $|\alpha_{(j)}| \neq |\beta_{(j)}|$.

That is, for each $\rho =(\rho_1, \cdots, \rho_m)\in \mathbb{Z}_+^m$, the image ${\bf T}_f^{\lambda}(H_{\rho})$ belongs to $H_{\rho}$.
\end{proof}

We introduce some notation: Given a function $a \in L^{\infty}(\mathbb{B}^{\ell})$ we denote by $f_a \in L^{\infty}(\mathbb{B}^n)$ the function
\begin{equation*}
 f_a(z) = f_a(z^{\prime},z^{\prime\prime}) = a\left(\frac{z^{\prime}}{\sqrt{1-|z^{\prime\prime}|^2}}\right).
\end{equation*}
Similarly, given $c \in L^{\infty}(\mathbb{B}^{n-\ell})$, we define the function $f_c \in L^{\infty}(\mathbb{B}^n)$ by $f_c(z) = f_c(z^{\prime},z^{\prime\prime}) = c(z^{\prime\prime})$.
In what follows we restrict Proposition \ref{prop:invariant} to functions of the form
\begin{equation} \label{eq:f=ac}
 f(z) = f_a(z)f_c(z) = f_{ac}(z)  = a\left(\frac{z^{\prime}}{\sqrt{1-|z^{\prime\prime}|^2}}\right) c(z^{\prime\prime}), 
\end{equation}
where $a \in L^{\infty}(\mathbb{B}^{\ell})$ and $c \in L^{\infty}(\mathbb{B}^{n-\ell})$. Note that $f_{ac}$ is invariant under the action (\ref{eq:action}) if and only if the function $a$ is invariant 
under the following action of the group $\mathbb{T}^m$:
\begin{equation} \label{eq:a-invar}
 (t_1,..., t_m) \in \mathbb{T}^m \, : \ (z_{(1)}, \ldots, z_{(m)}) \ \ \longmapsto \ \ (t_1z_{(1)}, \ldots, t_mz_{(m)}).
\end{equation}

\begin{cor}\label{Corollary_invariant_spaces_orthogonal_decomposition} 
The Toeplitz operator with symbol {\rm (\ref{eq:f=ac})}, with $a$ being invariant under the action {\rm (\ref{eq:a-invar})}, leaves all the spaces $H_{\rho}$ in the orthogonal decomposition {\rm (\ref{Orthogonal_decomposition_of_the_Bergman_space})} invariant.
\end{cor}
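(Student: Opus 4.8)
The plan is to recognize the symbol $f_{ac}$ in \eqref{eq:f=ac} as a special instance of the invariant symbols treated in Proposition \ref{prop:invariant}, so that the conclusion follows by direct application. The only point requiring verification is the equivalence --- already announced in the paragraph preceding the corollary --- between invariance of $a$ under \eqref{eq:a-invar} and invariance of $f_{ac}$ under \eqref{eq:action}.

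First I would examine how $f_{ac}$ transforms under the action \eqref{eq:action}. Since that action fixes the second block of coordinates, $z^{\prime\prime} \mapsto z^{\prime\prime}$, both the factor $c(z^{\prime\prime})$ and the scaling factor $\sqrt{1-|z^{\prime\prime}|^2}$ remain unchanged; only the first block is rotated blockwise, $z_{(j)} \mapsto t_j z_{(j)}$. Writing $w_{(j)} := z_{(j)}/\sqrt{1-|z^{\prime\prime}|^2}$, the value of $f_{ac}$ therefore changes only through the replacement of $a(w_{(1)}, \ldots, w_{(m)})$ by $a(t_1 w_{(1)}, \ldots, t_m w_{(m)})$. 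Consequently, if $a$ is invariant under \eqref{eq:a-invar}, then $f_{ac}$ is invariant under \eqref{eq:action}.

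Having established this, the corollary follows at once: $f_{ac}$ is a bounded symbol on $\mathbb{B}^n$ invariant under the torus action \eqref{eq:action}, so Proposition \ref{prop:invariant} applies verbatim and guarantees that ${\bf T}^{\lambda}_{f_{ac}}$ leaves each subspace $H_{\rho}$ in the decomposition \eqref{Orthogonal_decomposition_of_the_Bergman_space} invariant.

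I expect no serious obstacle here; the statement is a corollary precisely because the substantive work --- the orthogonality argument via Lemma \ref{le:different} --- has already been carried out in the proof of Proposition \ref{prop:invariant}. The only mildly delicate point, should one want the full ``if and only if'' stated in the surrounding text, is the reverse implication, which uses that the rescaled variable $w = z^{\prime}/\sqrt{1-|z^{\prime\prime}|^2}$ sweeps out all of $\mathbb{B}^{\ell}$ as $(z^{\prime},z^{\prime\prime})$ ranges over $\mathbb{B}^n$; but for the corollary as stated only the forward direction is needed.
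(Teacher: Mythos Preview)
Your proposal is correct and matches the paper's intended argument: the paper does not give an explicit proof of this corollary, relying instead on the sentence just before it (that $f_{ac}$ is invariant under \eqref{eq:action} iff $a$ is invariant under \eqref{eq:a-invar}) together with Proposition \ref{prop:invariant}. Your write-up simply spells out that observation in detail.
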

We now characterize the action of such Toeplitz operators related to each subspace $H_{\rho}$.
\begin{prop}\label{propositoin_decomposition_TO_form_3_3}
 Let $f_{ac}$ be of the form {\rm (\ref{eq:f=ac})}, with the function $a$ being invariant under the action {\rm (\ref{eq:a-invar})}. Let $T_a^{\lambda}$ be the Toeplitz operator with symbol $a$ acting on $\mathcal{A}^2_{\lambda}(\mathbb{B}^{\ell})$, and let $T_c^{\lambda+|\rho|+\ell}$ be the Toeplitz operator with symbol $c$ acting on $\mathcal{A}^2_{\lambda+|\rho|+\ell}(\mathbb{B}^{n-\ell})$. Then
\begin{equation*}
 u_{\rho} {\bf T}^{\lambda}_{f_{ac}} u_{\rho}^{-1} = T_a^{\lambda}|_{\mathscr{H}_{\rho}} \, \otimes \, T_c^{\lambda+|\rho|+\ell},
\end{equation*}
where the operator $u_{\rho} : H_{\rho}  \longrightarrow \mathscr{H}_{\rho} \otimes \mathcal{A}^2_{\lambda+|\rho|+\ell}(\mathbb{B}^{n-\ell})$ is defined by {\rm (\ref{eq:u_rho})}.
\end{prop}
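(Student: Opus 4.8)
The plan is to verify the claimed operator identity by comparing matrix coefficients against the orthonormal basis $\{e^{\lambda}_{\alpha^\prime}\otimes e^{\lambda+|\rho|+\ell}_{\alpha^{\prime\prime}}\}$ of $\mathscr{H}_{\rho}\otimes\mathcal{A}^2_{\lambda+|\rho|+\ell}(\mathbb{B}^{n-\ell})$. By Corollary \ref{Corollary_invariant_spaces_orthogonal_decomposition} the operator ${\bf T}^{\lambda}_{f_{ac}}$ leaves $H_{\rho}$ invariant, so $u_{\rho}{\bf T}^{\lambda}_{f_{ac}}u_{\rho}^{-1}$ is a well-defined operator on $\mathscr{H}_{\rho}\otimes\mathcal{A}^2_{\lambda+|\rho|+\ell}(\mathbb{B}^{n-\ell})$, and it suffices to consider indices $\alpha=(\alpha^\prime,\alpha^{\prime\prime})$, $\beta=(\beta^\prime,\beta^{\prime\prime})$ indexing $H_{\rho}$, that is, with $|\alpha_{(j)}|=|\beta_{(j)}|=\rho_j$ and hence $|\alpha^\prime|=|\beta^\prime|=|\rho|$. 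Since $u_{\rho}$ sends $e^{\lambda}_{\alpha}\mapsto e^{\lambda}_{\alpha^\prime}\otimes e^{\lambda+|\rho|+\ell}_{\alpha^{\prime\prime}}$ by \eqref{eq:u_rho}, the identity to be proved is equivalent to
\[
\langle {\bf T}^{\lambda}_{f_{ac}} e^{\lambda}_{\alpha}, e^{\lambda}_{\beta}\rangle_{\lambda} = \langle T_a^{\lambda} e^{\lambda}_{\alpha^\prime}, e^{\lambda}_{\beta^\prime}\rangle_{\lambda}\,\langle T_c^{\lambda+|\rho|+\ell} e^{\lambda+|\rho|+\ell}_{\alpha^{\prime\prime}}, e^{\lambda+|\rho|+\ell}_{\beta^{\prime\prime}}\rangle_{\lambda+|\rho|+\ell}.
\]
As $e^{\lambda}_{\beta}$ is holomorphic, the Bergman projection in the definition of the Toeplitz operator may be dropped, so the left-hand side equals $\langle f_{ac}\,e^{\lambda}_{\alpha}, e^{\lambda}_{\beta}\rangle_{\lambda}$.

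First I would write this inner product as an integral over $\mathbb{B}^n$ and perform the change of variables $z^\prime=\sqrt{1-|z^{\prime\prime}|^2}\,w^\prime$ with $w^\prime\in\mathbb{B}^{\ell}$, keeping $z^{\prime\prime}\in\mathbb{B}^{n-\ell}$ fixed. Under this substitution $f_{ac}(z)=a(w^\prime)c(z^{\prime\prime})$ by \eqref{eq:f=ac}, the factor $1-|z|^2$ splits as $(1-|z^{\prime\prime}|^2)(1-|w^\prime|^2)$, the monomials factor as $z^{\alpha}=(1-|z^{\prime\prime}|^2)^{|\alpha^\prime|/2}(w^\prime)^{\alpha^\prime}(z^{\prime\prime})^{\alpha^{\prime\prime}}$ and likewise for $\overline z^{\beta}$, while the volume element contributes a Jacobian $(1-|z^{\prime\prime}|^2)^{\ell}$. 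Collecting the powers of $1-|z^{\prime\prime}|^2$ and invoking $|\alpha^\prime|=|\beta^\prime|=|\rho|$, the total exponent becomes exactly $\lambda+|\rho|+\ell$. Consequently the integral separates into the product of an integral over $\mathbb{B}^{\ell}$ carrying the symbol $a$ with weight $\lambda$ and an integral over $\mathbb{B}^{n-\ell}$ carrying the symbol $c$ with weight $\lambda+|\rho|+\ell$. This is the structural heart of the argument and explains the weight shift $\lambda\mapsto\lambda+|\rho|+\ell$: it is forced by the Jacobian together with the constraint $|\alpha^\prime|=|\rho|$.

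It then remains to match normalization constants. Writing $e^{\lambda}_{\alpha}$, $e^{\lambda}_{\alpha^\prime}$ and $e^{\lambda+|\rho|+\ell}_{\alpha^{\prime\prime}}$ with their explicit $\Gamma$-factors, using $\alpha!=(\alpha^\prime)!\,(\alpha^{\prime\prime})!$ and $|\alpha^\prime|=|\rho|$ (and the same for $\beta$), one checks that the product of normalizations together with the constants $c^{(\ell)}_{\lambda}\,c^{(n-\ell)}_{\lambda+|\rho|+\ell}$ on the right-hand side agrees with $c^{(n)}_{\lambda}$ times the normalizations of $e^{\lambda}_{\alpha}, e^{\lambda}_{\beta}$ on the left; the key cancellation is produced by the identity $\Gamma(\ell+|\rho|+\lambda+1)=\Gamma(\lambda+|\rho|+\ell+1)$. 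I expect this bookkeeping to be the only genuinely delicate step, although it is ultimately routine $\Gamma$-function algebra and is in any case guaranteed to be consistent by the fact that $u_\rho$ is an isometry. Finally, to confirm that the right-hand side is a well-defined operator on $\mathscr{H}_\rho\otimes\mathcal{A}^2_{\lambda+|\rho|+\ell}(\mathbb{B}^{n-\ell})$, I would observe that, by the invariance \eqref{eq:a-invar} and Lemma \ref{le:different} applied on $\mathbb{B}^{\ell}$, the operator $T_a^{\lambda}$ leaves each finite-dimensional space $\mathscr{H}_\rho$ invariant, so that $T_a^{\lambda}|_{\mathscr{H}_\rho}$ indeed makes sense as the left tensor factor.
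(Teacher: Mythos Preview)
Your proposal is correct and follows essentially the same route as the paper: compute matrix coefficients of $u_{\rho}{\bf T}^{\lambda}_{f_{ac}}u_{\rho}^{-1}$ against the basis $e^{\lambda}_{\alpha'}\otimes e^{\lambda+|\rho|+\ell}_{\alpha''}$, apply the change of variables $z'=\sqrt{1-|z''|^2}\,w'$ to factor the integral, and then match the $\Gamma$-function normalizations. The paper carries out the constant bookkeeping explicitly rather than leaving it as a remark, and it records the invariance of $\mathscr{H}_\rho$ under $T_a^{\lambda}$ (via Lemma \ref{le:different}) at the outset rather than at the end, but these are only presentational differences.
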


\begin{proof}
We first mention that by Lemma \ref{le:different} each subspace $\mathscr{H}_{\rho}$ is invariant under the operator~$T_a^{\lambda}$.
Take any two basis elements $e^{\lambda}_{\alpha^{\prime}} \otimes e^{\lambda+|\rho|+\ell}_{\alpha^{\prime\prime}}$ and $e^{\lambda}_{\beta^{\prime}} \otimes e^{\lambda+|\rho|+\ell}_{\beta^{\prime\prime}}$ of 
$\mathscr{H}_{\rho} \otimes \mathcal{A}^2_{\lambda+|\rho|+\ell}(\mathbb{B}^{n-\ell})$, i.e.~$|\alpha_{(j)}| = |\beta_{(j)}| = \rho_j$ for all $j =1,...,m$ and $|\alpha^{\prime}| = |\beta^{\prime}| = |\rho|$. We also set $\alpha = (\alpha^{\prime},\alpha^{\prime\prime})$ and $\beta = (\beta^{\prime},\beta^{\prime\prime})$ and calculate
\begin{eqnarray} \label{eq:part_1} \nonumber
 && \left\langle u_{\rho} {\bf T}^{\lambda}_{f_{ac}} u_{\rho}^{-1} (e^{\lambda}_{\alpha^{\prime}} \otimes e^{\lambda+|\rho|+\ell}_{\alpha^{\prime\prime}}), (e^{\lambda}_{\beta^{\prime}} \otimes e^{\lambda+|\rho|+\ell}_{\beta^{\prime\prime}}) \right\rangle_{\lambda}  \\ \nonumber
 && = \left\langle{\bf T}^{\lambda}_{f_{ac}} e^{\lambda}_{\alpha},  e^{\lambda}_{\beta}\right\rangle_{\lambda} 
 = \left\langle f_{ac}\, e^{\lambda}_{\alpha},  e^{\lambda}_{\beta}\right\rangle_{\lambda} \\
&& = \sqrt{\frac{\Gamma(n+|\alpha|+\lambda+1)\Gamma(n+|\beta|+\lambda+1)} {\alpha!\, \beta!\, \Gamma(n+\lambda+1)^2}} \, \left\langle f_{ac}\, z^{\alpha},  z^{\beta}\right\rangle_{\lambda}.
\end{eqnarray}
Then (we will do a change of variables: $z^{\prime} = w \sqrt{1-|\zeta|^2}$, \ $z^{\prime\prime} = \zeta$):
\begin{eqnarray} \label{eq:part_2} \nonumber
 && \left\langle f_{ac}\, z^{\alpha},  z^{\beta}\right\rangle_{\lambda} = 
c^{(n)}_{\lambda} \int_{\mathbb{B}^n} f_{ac}\,z^{\alpha}\overline{z}^{\beta} (1-|z|^2)^{\lambda}\,dv(z) \\ \nonumber
&& = \ c^{(n)}_{\lambda} \int_{\mathbb{B}^{\ell}\times \mathbb{B}^{n-\ell}} a(w)c(\zeta) \, w^{\alpha^{\prime}}\overline{w}^{\beta^{\prime}} \,  \zeta^{\alpha^{\prime\prime}}\overline{\zeta}^{\beta^{\prime\prime}}\\ \nonumber
&& \times \ (1-|w|^2)^{\lambda} (1-|\zeta|^2)^{\lambda+|\rho|+\ell}\,dv(w)dv(\zeta) \\ \nonumber
&& = \ \frac{c^{(n)}_{\lambda}}{c^{(\ell)}_{\lambda}c^{(n-\ell)}_{\lambda+|\rho|+\ell}}\, \int_{\mathbb{B}^{\ell}} a(w) \, w^{\alpha^{\prime}}\overline{w}^{\beta^{\prime}}\,dv_{\lambda}(w) \int_{\mathbb{B}^{n-\ell}} c(\zeta) \,  \zeta^{\alpha^{\prime\prime}}\overline{\zeta}^{\beta^{\prime\prime}}\,dv_{\lambda+|\rho|+\ell}(\zeta) \\ \nonumber
&& = \ \frac{\Gamma(n+\lambda+1)\Gamma(\ell+|\rho|+\lambda+1)}{\Gamma(\ell+\lambda+1)\Gamma(n+|\rho|+\lambda+1)}\, \left\langle a\, w^{\alpha^{\prime}}, w^{\beta^{\prime}} \right\rangle_{\lambda} 
\cdot \left\langle c\, \zeta^{\alpha^{\prime\prime}}, \zeta^{\beta^{\prime\prime}} \right\rangle_{\lambda+ |\rho|+ \ell } \\ \nonumber
&& = \frac{\Gamma(n+\lambda+1)\Gamma(\ell+|\rho|+\lambda+1)}{\Gamma(\ell+\lambda+1)\Gamma(n+|\rho|+\lambda+1)}\\ \nonumber
&& \times \ \sqrt{\frac{(\alpha^{\prime})!\, (\beta^{\prime})!\, [\Gamma(\ell+\lambda+1)]^2}{\Gamma(\ell+|\alpha^{\prime}|+\lambda+1) \Gamma(\ell+|\beta^{\prime}|+\lambda+1)}}\,\left\langle a\, e^{\lambda}_{\alpha^{\prime}}, e^{\lambda}_{\beta^{\prime}} \right\rangle_{\lambda} \\
&& \times \ \sqrt{\frac{(\alpha^{\prime\prime})!\, (\beta^{\prime\prime})!\, [\Gamma(n+\lambda+|\alpha^{\prime}|+1)]^2}{\Gamma(n+|\alpha|+\lambda+1) \Gamma(n+|\beta|+\lambda+1)}}\,\left\langle c\, e^{\lambda+|\rho|+\ell}_{\alpha^{\prime\prime}}, e^{\lambda+|\rho|+\ell}_{\beta^{\prime\prime}} \right\rangle_{\lambda+|\rho|+ \ell}.
\end{eqnarray} 
Comparing (\ref{eq:part_1}) and (\ref{eq:part_2}), we obtain
\begin{multline*}
 \left\langle u_{\rho} {\bf T}^{\lambda}_{f_{ac}} u_{\rho}^{-1} (e^{\lambda}_{\alpha^{\prime}} \otimes e^{\lambda+|\rho|+\ell}_{\alpha^{\prime\prime}}), (e^{\lambda}_{\beta^{\prime}} \otimes e^{\lambda+|\rho|+\ell}_{\beta^{\prime\prime}}) \right\rangle_{\lambda}  =\\=
  \left\langle T^{\lambda}_a\, e^{\lambda}_{\alpha^{\prime}}, e^{\lambda}_{\beta^{\prime}} \right\rangle_{\lambda} \cdot \left\langle T^{\lambda+|\rho|+\ell}_c\, e^{\lambda+|\rho|+\ell}_{\alpha^{\prime\prime}}, e^{\lambda+|\rho|+\ell}_{\beta^{\prime\prime}} \right\rangle_{\lambda+|\rho|+ \ell}
\end{multline*}
and the result follows.
\end{proof}

In the next corollaries we characterize the action of a Toeplitz operator ${\bf T}^{\lambda}_{f_{ac}}$ related to the direct sum decomposition (\ref{eq:caligraphic_H}), as well as some of its properties.

\begin{cor} \label{co:f_ac}
Under the assumptions of Proposition \ref{propositoin_decomposition_TO_form_3_3} we have
\begin{equation*}
 U {\bf T}^{\lambda}_{f_{ac}} U^{-1} = \bigoplus_{\rho \in \mathbb{Z}^m_+}\,  T_a^{\lambda}|_{\mathscr{H}_{\rho}} \, \otimes \, T_c^{\lambda+|\rho|+\ell},
\end{equation*}
where the operator $U$ is given by {\rm (\ref{eq:U})}. In particular, 
\begin{eqnarray*}
 U {\bf T}^{\lambda}_{f_{a}} U^{-1} &=& \bigoplus_{\rho \in \mathbb{Z}^m_+}\,  T_a^{\lambda}|_{\mathscr{H}_{\rho}} \, \otimes \, I, \\
 U {\bf T}^{\lambda}_{f_{c}} U^{-1} &=& \bigoplus_{\rho \in \mathbb{Z}^m_+}\,  I \, \otimes \, T_c^{\lambda+|\rho|+\ell}, \\
 {\bf T}^{\lambda}_{f_{ac}} &=& {\bf T}^{\lambda}_{f_{a}} {\bf T}^{\lambda}_{f_{c}} = 
 {\bf T}^{\lambda}_{f_{c}} {\bf T}^{\lambda}_{f_{a}}.
\end{eqnarray*}
\end{cor}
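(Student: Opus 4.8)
The plan is to assemble the per-block identity of Proposition~\ref{propositoin_decomposition_TO_form_3_3} into the single direct-sum formula, and then obtain the three additional assertions as immediate specializations.

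First I would note that, by Corollary~\ref{Corollary_invariant_spaces_orthogonal_decomposition}, each summand $H_\rho$ of the decomposition (\ref{Orthogonal_decomposition_of_the_Bergman_space}) is invariant under ${\bf T}^\lambda_{f_{ac}}$; hence ${\bf T}^\lambda_{f_{ac}}$ coincides with the orthogonal direct sum $\bigoplus_\rho {\bf T}^\lambda_{f_{ac}}|_{H_\rho}$ of its restrictions. Since the unitary $U$ of (\ref{eq:U}) acts componentwise as $U=\bigoplus_\rho u_\rho$, conjugation by $U$ is carried out blockwise, so $U{\bf T}^\lambda_{f_{ac}}U^{-1}=\bigoplus_\rho u_\rho({\bf T}^\lambda_{f_{ac}}|_{H_\rho})u_\rho^{-1}$. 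Applying Proposition~\ref{propositoin_decomposition_TO_form_3_3} to each index $\rho$ identifies the $\rho$-th block with $T_a^\lambda|_{\mathscr{H}_\rho}\otimes T_c^{\lambda+|\rho|+\ell}$, which yields the first displayed formula. Boundedness of the resulting direct sum is automatic, since ${\bf T}^\lambda_{f_{ac}}$ is bounded and $U$ is unitary.

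Next I would derive the two special cases by substitution. Taking $c\equiv 1$ turns $f_{ac}$ into $f_a$, and $T_c^{\lambda+|\rho|+\ell}=T_1^{\lambda+|\rho|+\ell}$ is the identity on $\mathcal{A}^2_{\lambda+|\rho|+\ell}(\mathbb{B}^{n-\ell})$, giving the formula for ${\bf T}^\lambda_{f_a}$. Symmetrically, taking $a\equiv 1$ (which is trivially invariant under the action (\ref{eq:a-invar}), so the hypotheses still apply) turns $f_{ac}$ into $f_c$; here $T_a^\lambda$ is the identity on $\mathcal{A}^2_\lambda(\mathbb{B}^\ell)$, and its restriction $T_a^\lambda|_{\mathscr{H}_\rho}$ is the identity on the finite-dimensional space $\mathscr{H}_\rho$, producing the formula for ${\bf T}^\lambda_{f_c}$. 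For the factorization I would then invoke the elementary tensor identity $(A\otimes I)(I\otimes B)=A\otimes B=(I\otimes B)(A\otimes I)$. Conjugating the two products by $U$ and using $U(ST)U^{-1}=(USU^{-1})(UTU^{-1})$, both $U{\bf T}^\lambda_{f_a}{\bf T}^\lambda_{f_c}U^{-1}$ and $U{\bf T}^\lambda_{f_c}{\bf T}^\lambda_{f_a}U^{-1}$ reduce blockwise to $\bigoplus_\rho T_a^\lambda|_{\mathscr{H}_\rho}\otimes T_c^{\lambda+|\rho|+\ell}=U{\bf T}^\lambda_{f_{ac}}U^{-1}$; since $U$ is unitary, I conclude ${\bf T}^\lambda_{f_{ac}}={\bf T}^\lambda_{f_a}{\bf T}^\lambda_{f_c}={\bf T}^\lambda_{f_c}{\bf T}^\lambda_{f_a}$.

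I do not anticipate a genuine obstacle here: the substantive content is entirely in Proposition~\ref{propositoin_decomposition_TO_form_3_3}, and the corollary is its bookkeeping over $\rho$. The only points requiring a word of care are the well-definedness of the restriction $T_a^\lambda|_{\mathscr{H}_\rho}$, i.e. the invariance of $\mathscr{H}_\rho$ under $T_a^\lambda$ (already recorded at the start of that proof), and the harmless observation that the constant symbols $a\equiv 1$ and $c\equiv 1$ indeed give identity Toeplitz operators.
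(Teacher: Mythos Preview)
Your proposal is correct and follows exactly the intended route: the paper states this corollary without proof, treating it as immediate from Proposition~\ref{propositoin_decomposition_TO_form_3_3} together with the blockwise definition of $U$ in~(\ref{eq:U}), and your write-up simply spells out those routine details (including the specializations $a\equiv 1$, $c\equiv 1$ and the tensor factorization).
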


Let $H$ be a Hilbert space with orthogonal decomposition 
\begin{equation*}
H=\bigoplus_{j=1}^{\infty} H_j
\end{equation*}
and assume that $A \in \mathcal{L}(H)$ leaves $H_j$ invariant for all $j \in \mathbb{N}$. We write $A_j$ for the restriction of $A$ to $H_j$. Then 
it can be easily seen that 
\begin{equation*}
\|A\|= \sup_{j \in \mathbb{N}} \|A_j\|. 
\end{equation*}
\begin{cor}\label{cor_Theorem_1_2 }
With the above assumptions on symbols, we have
\begin{equation*}
\|{\bf T}^{\lambda}_{f_{ac}}\|=\sup_{\rho \in \mathbb{Z}^m_+} \, \|T_a^{\lambda}|_{\mathscr{H}_{\rho}}\|\cdot \|T_c^{\lambda+|\rho|+\ell}\|.
\end{equation*}
\end{cor}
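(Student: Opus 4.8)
The plan is to reduce the claim to two standard facts: the unitary invariance of the operator norm combined with the direct-sum norm formula displayed immediately before the statement, and the multiplicativity of the operator norm under Hilbert-space tensor products.

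First I would use that the operator $U$ from (\ref{eq:U}) is unitary, so that $\|{\bf T}^{\lambda}_{f_{ac}}\| = \|U {\bf T}^{\lambda}_{f_{ac}} U^{-1}\|$. By Corollary \ref{co:f_ac}, the conjugated operator is the orthogonal direct sum
\[
 U {\bf T}^{\lambda}_{f_{ac}} U^{-1} = \bigoplus_{\rho \in \mathbb{Z}^m_+}\,  T_a^{\lambda}|_{\mathscr{H}_{\rho}} \, \otimes \, T_c^{\lambda+|\rho|+\ell},
\]
acting on $\mathcal{H} = \bigoplus_{\rho} \mathcal{H}_{\rho}$ from (\ref{eq:caligraphic_H}). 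In particular it leaves each summand $\mathcal{H}_{\rho} = \mathscr{H}_{\rho} \otimes \mathcal{A}^2_{\lambda+|\rho|+\ell}(\mathbb{B}^{n-\ell})$ invariant, with restriction $T_a^{\lambda}|_{\mathscr{H}_{\rho}} \otimes T_c^{\lambda+|\rho|+\ell}$. Since $\mathbb{Z}_+^m$ is countable, I may relabel the index set by $\mathbb{N}$ and apply verbatim the norm identity for operators respecting an orthogonal decomposition stated just above the corollary, obtaining
\[
 \|{\bf T}^{\lambda}_{f_{ac}}\| = \sup_{\rho \in \mathbb{Z}^m_+} \big\| T_a^{\lambda}|_{\mathscr{H}_{\rho}} \otimes T_c^{\lambda+|\rho|+\ell}\big\|.
\]

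Second, I would invoke the elementary identity $\|A \otimes B\| = \|A\|\,\|B\|$ for the Hilbert-space tensor product of two bounded operators. Here $A = T_a^{\lambda}|_{\mathscr{H}_{\rho}}$ is bounded, as it acts on the finite-dimensional space $\mathscr{H}_{\rho}$, and $B = T_c^{\lambda+|\rho|+\ell}$ is bounded since $c \in L^{\infty}(\mathbb{B}^{n-\ell})$. Substituting $\| T_a^{\lambda}|_{\mathscr{H}_{\rho}} \otimes T_c^{\lambda+|\rho|+\ell}\| = \|T_a^{\lambda}|_{\mathscr{H}_{\rho}}\| \cdot \|T_c^{\lambda+|\rho|+\ell}\|$ into the previous display yields exactly the asserted formula.

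There is no serious obstacle here; beyond the preceding results the proof rests only on these two textbook facts, so the ``hard part'' is merely to state them cleanly. The one point deserving a line of justification is the tensor-product norm identity: the inequality $\|A\otimes B\| \le \|A\|\,\|B\|$ is immediate from the cross-norm property, while the reverse inequality follows by testing $A \otimes B$ on elementary tensors $x \otimes y$ with $\|Ax\|$ arbitrarily close to $\|A\|\,\|x\|$ and $\|By\|$ arbitrarily close to $\|B\|\,\|y\|$. Finiteness of the supremum is automatic, since ${\bf T}^{\lambda}_{f_{ac}}$ is bounded as $f_{ac} \in L^{\infty}(\mathbb{B}^n)$.
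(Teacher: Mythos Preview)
Your proposal is correct and follows exactly the route the paper intends: the corollary is stated without proof, as it is an immediate consequence of Corollary~\ref{co:f_ac} together with the direct-sum norm formula displayed just above, plus the standard identity $\|A\otimes B\|=\|A\|\,\|B\|$ for Hilbert-space tensor products. You have simply made explicit the two textbook facts that the paper leaves implicit.
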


\begin{rem} \label{re:abrev} {\rm
Let ${\bf T}^{\lambda}$ be a bounded operator on $\mathcal{A}^2_{\lambda}(\mathbb{B}^n)$ which leaves all spaces $H_{\rho}$, $\rho \in \mathbb{Z}^m_+$, invariant and set $\mathcal{H}_{\rho} = \mathscr{H}_{\rho} \otimes \mathcal{A}^2_{\lambda+|\rho|+\ell}(\mathbb{B}^{n-\ell})$. Then there exists a sequence  $\{T_{\rho}\}_{\rho \in \mathbb{Z}^m_+}\subset \mathcal{L}(\mathcal{H}_{\rho})$ such that
\begin{equation} \label{eq:o-plus}
 U {\bf T}^{\lambda}U^* = \bigoplus_{\rho \in \mathbb{Z}^m_+} T_{\rho}
\end{equation}
via the unitary operator $U$ given by (\ref{eq:U}). In what follows we will abbreviate (\ref{eq:o-plus}) by
\begin{equation*}
 {\bf T}^{\lambda} \asymp \bigoplus_{\rho \in \mathbb{Z}^m_+} T_{\rho},
\end{equation*}
identifying the operator ${\bf T}^{\lambda}$ with its direct sum representation.  }
\end{rem}
We also recall the definition of the Berezin transform 
\begin{equation*}
\mathcal{B}_{\mu}\ : \ \mathcal{L}(\mathcal{A}^2_{\mu}(\mathbb{B}^{d})) \rightarrow L^{\infty}(\mathbb{B}^{d})\ : \ \mathcal{B}_{\mu}[A](z)\, :=  \, \big{\langle} Ak_z^{\mu},k_z^{\mu} \big{\rangle}_{\mu}.
\end{equation*}
Here $k_z^{\mu}$ denotes the normalized reproducing kernel function of $\mathcal{A}^2_{\mu}(\mathbb{B}^{d})$ defined by
\begin{equation*}
 k_z^{\mu}(w)=\frac{(1-|z|^2)^{\frac{d+\mu+1}{2}}}{(1-w \cdot \overline{z})^{d+\mu+1}}, \hspace{5ex} z,w \in \mathbb{B}^{d}. 
\end{equation*}
For the special case of a Toeplitz operator $T^{\mu}_g$, we have
\begin{equation*}
 \mathcal{B}_{\mu}[T^{\mu}_g](z)  =  \mathcal{B}_{\mu}[g](z) = \big{\langle} g\,k_z^{\mu},k_z^{\mu} \big{\rangle}_{\mu}.
\end{equation*}
In this paper we aim to analyze operator algebras that are generated by certain families of Toeplitz operators (also see the results in \cite{AA,BV1,BV1-II,Le_2010, MiSuWi, Suarez_2007,V1,XZ}). 
For general  symbols $a \in L^{\infty}(\mathbb{B}^{\ell})$ and $c \in L^{\infty}(\mathbb{B}^{n-\ell})$ not much can be said about the structure of the algebra generated by all Toeplitz operators ${\bf T}^{\lambda}_{f_{ac}}$. Thus for the rest of the paper our strategy will be as follows. We select subclasses $\mathcal{S}_{\ell} \subset L^{\infty}(\mathbb{B}^{\ell})$ of functions invariant under the action (\ref{eq:a-invar}) of the group $\mathbb{T}^m$ and $\mathcal{S}_{n-\ell} \subset L^{\infty}(\mathbb{B}^{n-\ell})$. Then we consider the closed unital algebra 
$\boldsymbol{\mathcal{T}}_{\lambda}(\mathcal{S}_{\ell}, \mathcal{S}_{n-\ell})\subset \mathcal{L}(\mathcal{A}_{\lambda}^2(\mathbb{B}^n))$ 
generated by all Toeplitz operators ${\bf T}^{\lambda}_{f_{ac}}$ with $a \in \mathcal{S}_{\ell}$ and $c \in \mathcal{S}_{n-\ell}$.

By Corollary \ref{co:f_ac} the algebra $\boldsymbol{\mathcal{T}}_{\lambda}(\mathcal{S}_{\ell}, \mathcal{S}_{n-\ell})$ is generated by its subalgebras 
$\boldsymbol{\mathcal{T}}_{\lambda}(\mathcal{S}_{\ell}, 1)$ and $\boldsymbol{\mathcal{T}}_{\lambda}(1, \mathcal{S}_{n-\ell})$. Moreover, elements ${\bf T}^{\prime} \in \boldsymbol{\mathcal{T}}_{\lambda}(\mathcal{S}_{\ell}, 1)$ commute with elements ${\bf T}^{\prime\prime} \in \boldsymbol{\mathcal{T}}_{\lambda}(1, \mathcal{S}_{n-\ell})$. 
\vspace{0.5ex} \par 
In the next sections we will select  $\mathcal{S}_{\ell}$  and $\mathcal{S}_{n-\ell}$ for which $\boldsymbol{\mathcal{T}}_{\lambda}(\mathcal{S}_{\ell}, 1)$ and $\boldsymbol{\mathcal{T}}_{\lambda}(1, \mathcal{S}_{n-\ell})$ admit {\it ``reasonable descriptions''}. Then we will characterize the algebra $\boldsymbol{\mathcal{T}}_{\lambda}(\mathcal{S}_{\ell}, \mathcal{S}_{n-\ell})$.
%%%%%%%%%%%%%%%%%%%%%%%%%%%%%%%%%%%%%%%%%%%%%%%%%%%%%%%%%%%%%%%%%%%%%%%%%%%%%%%%%%%%%%%%%%%%
\section{Operators with BUC symbols}
\setcounter{equation}{0}
\label{Section_4}
%%%%%%%%%%%%%%%%%%%%%%%%%%%%%%%%%%%%%%%%%%%%%%%%%%%%%%%%%%%%%%%%%%%%%%%%%%%%%%%%%%%%%%%%%%%%%%
Let us denote by ${\rm BUC}(\mathbb{B}^{n-\ell})$ the space ($C^*$-algebra) of all bounded complex-valued functions on $\mathbb{B}^{n-\ell}$ that are uniformly continuous with respect to the Bergman metric on $\mathbb{B}^{n-\ell}$ (cf. \cite{BC1}).

Introduce the $C^*$-algebra $\boldsymbol{\mathcal{T}}_{\lambda}(1,{\rm BUC}(\mathbb{B}^{n-\ell}))$, which is generated by all Toeplitz operators $\mathbf{T}^{\lambda}_{f_c}$ acting on $\mathcal{A}_{\lambda}^2(\mathbb{B}^n)$ with $c \in {\rm BUC}(\mathbb{B}^{n-\ell})$ and denote by $\mathcal{T}_{\mu}({\rm BUC}(\mathbb{B}^{n-\ell}))$ the $C^*$-algebra generated by all Toeplitz operators $T^{\mu}_c$ acting on $\mathcal{A}_{\mu}^2(\mathbb{B}^{n-\ell})$ with $c \in {\rm BUC}(\mathbb{B}^{n-\ell})$.
\begin{rem}{\rm 
We note that the algebra $\mathcal{T}_{\mu}({\rm BUC}(\mathbb{B}^{n-\ell}))$ (by \cite[Theorem 7.3]{Suarez_2007}) coincides with the $C^*$-algebra $\mathcal{T}_{\mu}(L^{\infty}(\mathbb{B}^{n-\ell}))$ 
generated by all Toeplitz operators $T^{\mu}_c$  acting on $\mathcal{A}_{\mu}^2(\mathbb{B}^{n-\ell})$ with $c \in L^{\infty}(\mathbb{B}^{n-\ell})$.}
\end{rem}
Each operator $\mathbf{T}^{\lambda} \in \boldsymbol{\mathcal{T}}_{\lambda}(1,{\rm BUC}(\mathbb{B}^{n-\ell}))$ admits the representation
\begin{equation} \label{eq:T_dir-sum}
 \mathbf{T}^{\lambda} \asymp \bigoplus_{\rho \in \mathbb{Z}_+^m} I \otimes T^{\lambda+|\rho|+\ell},
\end{equation}
where  $T^{\lambda+|\rho|+\ell} \in \mathcal{T}_{\lambda+|\rho|+\ell}({\rm BUC}(\mathbb{B}^{n-\ell}))$, for each $\rho \in \mathbb{Z}_+^m$. 
\begin{lem} \label{le:sup-norm}
For each operator $\mathbf{T}^{\lambda} \in \boldsymbol{\mathcal{T}}_{\lambda}(1,{\rm BUC}(\mathbb{B}^{n-\ell}))$ we have that
\begin{equation*}
 \|\mathbf{T}^{\lambda}\| = \sup_{\rho \in \mathbb{Z}_+^m} \|T^{\lambda+|\rho|+\ell}\|, \hspace{2ex} \mbox{and} \hspace{2ex} 
\|{\bf T}_{f_c}^{\lambda}\|=\sup_{z \in \mathbb{B}^{n-\ell}} |c(z)|=\| c\|_{\infty}.
\end{equation*}
\end{lem}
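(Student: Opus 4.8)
The plan is to derive both equalities from the block--diagonal structure produced in the previous section, and to reduce the essential analytic content to a single concentration statement for the Berezin transform.

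\emph{First identity.} I would begin from the representation (\ref{eq:T_dir-sum}): after conjugating by the unitary $U$, the operator $\mathbf{T}^{\lambda}$ becomes the block--diagonal operator $\bigoplus_{\rho} I \otimes T^{\lambda+|\rho|+\ell}$. Since $U$ is unitary the norm is unchanged, and by the elementary identity $\|A\| = \sup_j \|A_j\|$ for operators respecting an orthogonal decomposition (noted above) it equals $\sup_{\rho} \|I \otimes T^{\lambda+|\rho|+\ell}\|$. Because each $\mathscr{H}_{\rho}$ is a nonzero finite--dimensional space, $\|I \otimes T^{\lambda+|\rho|+\ell}\| = \|T^{\lambda+|\rho|+\ell}\|$, which gives the first claimed equality.

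\emph{Second identity, upper bound.} For the Toeplitz operator $\mathbf{T}_{f_c}^{\lambda}$, Corollary \ref{co:f_ac} identifies the blocks explicitly as $T^{\lambda+|\rho|+\ell} = T_c^{\lambda+|\rho|+\ell}$, so the first part already yields $\|\mathbf{T}_{f_c}^{\lambda}\| = \sup_{\rho} \|T_c^{\lambda+|\rho|+\ell}\|$. The standard contractivity estimate $\|T_c^{\mu} h\| = \|P_{\mu}(ch)\|_{\mu} \leq \|ch\|_{\mu} \leq \|c\|_{\infty}\|h\|_{\mu}$ gives $\|T_c^{\mu}\| \leq \|c\|_{\infty}$ for every weight $\mu$; as the weights $\mu = \lambda+|\rho|+\ell$ exhaust $\lambda+\ell+\mathbb{Z}_+$ (since $|\rho|$ runs over all of $\mathbb{Z}_+$), this forces $\sup_{\rho}\|T_c^{\lambda+|\rho|+\ell}\| \leq \|c\|_{\infty}$.

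\emph{Second identity, lower bound.} Here I would use that the normalized reproducing kernel $k_z^{\mu}$ is a unit vector, so for every $z \in \mathbb{B}^{n-\ell}$ and every weight $\mu$,
\begin{equation*}
 \|T_c^{\mu}\| \geq \big|\langle T_c^{\mu} k_z^{\mu}, k_z^{\mu}\rangle_{\mu}\big| = \big|\mathcal{B}_{\mu}[c](z)\big|.
\end{equation*}
Granting the quantization limit $\mathcal{B}_{\mu}[c](z) \to c(z)$ as $\mu \to \infty$, valid for $c \in {\rm BUC}(\mathbb{B}^{n-\ell})$, I obtain for each fixed $z$ that $\sup_{\mu}\|T_c^{\mu}\| \geq \limsup_{\mu \to \infty}|\mathcal{B}_{\mu}[c](z)| = |c(z)|$; taking the supremum over $z$ and using continuity of $c$ (so that $\sup_z|c(z)| = \|c\|_{\infty}$) gives the reverse inequality $\sup_{\rho}\|T_c^{\lambda+|\rho|+\ell}\| \geq \|c\|_{\infty}$, completing the argument.

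The main obstacle is precisely this Berezin limit. I would establish it by Möbius invariance: it suffices to treat $z=0$, where $k_0^{\mu}\equiv 1$ and hence $\mathcal{B}_{\mu}[c](0) = c^{(n-\ell)}_{\mu}\int_{\mathbb{B}^{n-\ell}} c(w)(1-|w|^2)^{\mu}\,dv(w)$. The probability densities $c^{(n-\ell)}_{\mu}(1-|w|^2)^{\mu}$ concentrate at the origin as $\mu\to\infty$, so the integral converges to $c(0)$ by continuity; the case of a general point $z$ then follows by pulling back along the automorphism $\varphi_z$ and using the transformation law of the weighted Bergman kernels.
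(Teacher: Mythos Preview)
Your argument is correct and matches the paper's proof: the first identity comes from the block--diagonal norm formula (Corollary~\ref{cor_Theorem_1_2 }), and the second from the trivial upper bound $\|T_c^{\mu}\|\leq\|c\|_{\infty}$ together with the lower bound via the Berezin transform and its quantization limit. The only cosmetic difference is that the paper cites \cite[Proposition~4.4]{BC1} for the uniform convergence $\mathcal{B}_{\mu}[c]\to c$, whereas you sketch the (sufficient) pointwise version directly.
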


\begin{proof}
The first equality follows from Corollary \ref{cor_Theorem_1_2 }. The second equality follows from \cite[Proposition 4.4]{BC1} and
\begin{equation*}
 \|c\|_{\infty} =\| \lim_{j \rightarrow \infty} \mathcal{B}_{\lambda+j+\ell}\big{[}T_c^{\lambda+j+\ell}\big{]}(z)\|_{\infty}
\leq \sup_{j\in \mathbb{Z}_+} \big{\|} T_c^{\lambda+j+\ell} \big{\|} = \|{\bf T}_{f_c}^{\lambda}\| \leq
\|c\|_{\infty}. \qedhere
\end{equation*} 
\end{proof}
\begin{thm} \label{th:rep}
Each operator $\mathbf{T}^{\lambda} \in \boldsymbol{\mathcal{T}}_{\lambda}(1,{\rm BUC}(\mathbb{B}^{n-\ell}))$, in the direct sum decomposition {\rm (\ref{eq:T_dir-sum})}, 
admits the unique representation
\begin{equation} \label{eq:unique}
  \mathbf{T}^{\lambda}
\asymp \bigoplus_{\rho \in \mathbb{Z}_+^m} I \otimes (T_c^{\lambda+|\rho|+\ell} + N^{\lambda+|\rho|+\ell}),
\end{equation}
where $c \in {\rm BUC}(\mathbb{B}^{n-\ell})$, each operator $N^{\lambda+|\rho|+\ell}$ belongs to the semi-commutator ideal of the algebra 
$\mathcal{T}_{\lambda+|\rho|+\ell}({\rm BUC}(\mathbb{B}^{n-\ell}))$, and $\|N^{\lambda+|\rho|+\ell}\| \to 0$ as $|\rho| \to \infty$.
\end{thm}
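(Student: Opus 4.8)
The plan is to construct the symbol $c$ as a large-weight limit of Berezin transforms, verify the representation (\ref{eq:unique}) first on the dense $*$-subalgebra of ``Toeplitz polynomials'' and then pass to the norm closure. Throughout I write $\mu = \lambda + |\rho| + \ell$ and note that, by Corollary \ref{co:f_ac}, the generator $\mathbf{T}^{\lambda}_{f_c}$ has blocks $I \otimes T_c^{\lambda+|\rho|+\ell}$, so the block $T^{\mu}$ in (\ref{eq:T_dir-sum}) depends on $\rho$ only through $p := |\rho|$ for the whole algebra. Two external facts drive the argument: the Bauer--Coburn convergence $\mathcal{B}_{\mu}[T_c^{\mu}] \to c$ uniformly as $\mu \to \infty$ for $c \in {\rm BUC}(\mathbb{B}^{n-\ell})$ (cf.~\cite{BC1}), and the deformation-quantization estimate $\| T_f^{\mu} T_g^{\mu} - T_{fg}^{\mu}\| \to 0$ as $\mu \to \infty$ for $f,g \in {\rm BUC}(\mathbb{B}^{n-\ell})$ (cf.~\cite{BaHaVa,BV2}).

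First I would treat an element $\mathbf{T}^{\lambda}$ of the dense $*$-subalgebra generated by the $\mathbf{T}^{\lambda}_{f_c}$, i.e.~a finite sum of finite products of generators. By Corollary \ref{co:f_ac} its block $T^{\mu}$ is the corresponding sum of products of Toeplitz operators $T_{c_i}^{\mu}$. Working modulo the closed two-sided semi-commutator ideal, the relation $T_f^{\mu}T_g^{\mu} \equiv T_{fg}^{\mu}$ reduces every such sum of products to a single $T_c^{\mu}$, where $c$ is obtained by replacing operator products by pointwise products of symbols; thus $N^{\mu} := T^{\mu} - T_c^{\mu}$ lies in the semi-commutator ideal, purely algebraically. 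The quantization estimate then upgrades each reduction step to a norm bound, yielding $\| N^{\mu}\| \to 0$ as $p \to \infty$. Combining $\|\mathcal{B}_\mu[N^\mu]\|_\infty \le \|N^\mu\| \to 0$ with the Bauer--Coburn convergence shows $\mathcal{B}_{\mu}[T^{\mu}] \to c$ uniformly, so $c$ is intrinsically determined by $\mathbf{T}^\lambda$ as this Berezin limit, and the assignment $\operatorname{sym}(\mathbf{T}^\lambda) := c$ is linear on the subalgebra.

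Next I would extend to the whole algebra. Using Lemma \ref{le:sup-norm} and $\|\mathcal{B}_\mu[T^\mu]\|_\infty \le \|T^\mu\| \le \|\mathbf{T}^\lambda\|$, the convergence $\mathcal{B}_\mu[T^\mu]\to c$ gives $\|c\|_\infty \le \|\mathbf{T}^\lambda\|$; hence $\operatorname{sym}$ is contractive on the dense subalgebra and extends to a contraction on $\boldsymbol{\mathcal{T}}_{\lambda}(1,{\rm BUC}(\mathbb{B}^{n-\ell}))$. For a general $\mathbf{T}^\lambda$ I put $c := \operatorname{sym}(\mathbf{T}^\lambda)$ and $N^\mu := T^\mu - T_c^\mu$. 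Approximating $\mathbf{T}^\lambda$ in norm by polynomials $\mathbf{T}_n^\lambda$ with symbols $c_n$, one writes the block identity
\begin{equation*}
 N^{\mu} = (T^{\mu} - T_n^{\mu}) + T_{c_n - c}^{\mu} + N_n^{\mu}
\end{equation*}
and combines it with $\|T^\mu - T_n^\mu\| \le \|\mathbf{T}^\lambda - \mathbf{T}_n^\lambda\|$, with $\|T_{c_n-c}^\mu\| \le \|c_n - c\|_\infty \le \|\mathbf{T}^\lambda - \mathbf{T}_n^\lambda\|$ (the latter by contractivity of $\operatorname{sym}$ and Lemma \ref{le:sup-norm}), and with $\|N_n^\mu\| \to 0$, to obtain $\limsup_{p\to\infty}\|N^\mu\| \le 2\|\mathbf{T}^\lambda - \mathbf{T}_n^\lambda\|$; letting $n\to\infty$ gives $\|N^\mu\|\to 0$. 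For fixed $p$, letting $n \to\infty$ in the same identity exhibits $N^\mu$ as a norm limit of elements $N_n^\mu$ of the closed semi-commutator ideal, hence $N^\mu$ lies in it. This establishes (\ref{eq:unique}).

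Finally, for uniqueness: if $\mathbf{T}^\lambda \asymp \bigoplus_\rho I \otimes (T_{c'}^\mu + N'^\mu)$ with $c' \in {\rm BUC}(\mathbb{B}^{n-\ell})$, $N'^\mu$ in the semi-commutator ideal and $\|N'^\mu\|\to 0$, then $\mathcal{B}_\mu[T^\mu] = \mathcal{B}_\mu[T_{c'}^\mu] + \mathcal{B}_\mu[N'^\mu] \to c'$, forcing $c' = c$ and then $N'^\mu = T^\mu - T_c^\mu = N^\mu$. The main obstacle is the deformation-quantization estimate $\|T_f^\mu T_g^\mu - T_{fg}^\mu\|\to 0$: membership of $N^\mu$ in the ideal is purely algebraic, but the decay $\|N^\mu\|\to 0$, and with it both the contractivity of $\operatorname{sym}$ and the uniqueness of $c$, rests entirely on this asymptotic vanishing of semi-commutator norms as the weight grows.
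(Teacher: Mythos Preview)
Your proof is correct and follows the same overall architecture as the paper's: establish the representation on the dense subalgebra of Toeplitz polynomials using the semi-commutator norm decay from \cite{BaHaVa}, then pass to the closure by approximation, with an $\varepsilon/3$-argument for the decay $\|N^{\lambda+|\rho|+\ell}\|\to 0$. The difference lies in how the symbol $c$ is identified. The paper works directly with the Cauchy sequence of symbols $a_k$ of the approximants and shows it is Cauchy in ${\rm BUC}(\mathbb{B}^{n-\ell})$ via the elementary norm limit $\lim_{|\rho|\to\infty}\|T_{a_k-a_m}^{\lambda+|\rho|+\ell}\| = \|a_k-a_m\|_\infty$ from Lemma \ref{le:sup-norm}; no Berezin transform is invoked at this stage. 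You instead define the symbol intrinsically as $c = \lim_{\mu\to\infty}\mathcal{B}_\mu[T^\mu]$ and package this as a contractive linear map $\operatorname{sym}$ that extends by continuity. Your route is slightly more conceptual and essentially front-loads what the paper does afterwards in Lemma \ref{le:rho} and Remark \ref{rem:rec}, at the price of importing the Bauer--Coburn convergence from \cite{BC1} one step earlier; the paper's route is more self-contained here, needing only Lemma \ref{le:sup-norm}. Both arguments rest on the same indispensable input, namely the asymptotic vanishing of semi-commutator norms as the weight grows.
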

\begin{proof}
We consider first a dense subalgebra $\boldsymbol{\mathcal{D}}_{\lambda}$ of $\boldsymbol{\mathcal{T}}_{\lambda}(1,{\rm BUC}(\mathbb{B}^{n-\ell}))$, which consists of 
finite sums of finite products of initial generators. To prove the statement for this case, it is sufficient to prove it for finite products of $m$ Toeplitz operators. By induction we can 
assume that $m=2$. Thus, given $f_a,\, f_b$ with $a,\, b  \in {\rm BUC}(\mathbb{B}^{n-\ell})$, we consider
\begin{equation*}
 \mathbf{T}_{f_a}^{\lambda}\mathbf{T}_{f_b}^{\lambda} \asymp \bigoplus_{\rho \in \mathbb{Z}_+^m} I \otimes T_a^{\lambda+|\rho|+\ell}T_b^{\lambda+|\rho|+\ell}.
\end{equation*}
For each $\rho$, we write $T_a^{\lambda+|\rho|+\ell}T_b^{\lambda+|\rho|+\ell} = 
T_{ab}^{\lambda+|\rho|+\ell} + N^{\lambda+|\rho|+\ell}$, where the operator 
$$N^{\lambda+|\rho|+\ell} = T_a^{\lambda+|\rho|+\ell}T_b^{\lambda+|\rho|+\ell} - T_{ab}^{\lambda+|\rho|+\ell}$$ 
belongs to the semi-commutator ideal of the algebra $\mathcal{T}_{\lambda+|\rho|+\ell}({\rm BUC}(\mathbb{B}^{n-\ell}))$. Finally, \cite[Theorem 3.8]{BaHaVa} implies that 
$\|N^{\lambda+|\rho|+\ell}\| \to 0$ as $|\rho| \to \infty$.
\vspace{1ex}\par 
Now, given $\mathbf{T}^{\lambda} \in \boldsymbol{\mathcal{T}}_{\lambda}(1,{\rm BUC}(\mathbb{B}^{n-\ell}))$, there exists a fundamental sequence $\{\mathbf{T}^{\lambda, k}\}_{k \in \mathbb{N}}$ of elements 
from $\boldsymbol{\mathcal{D}}_{\lambda}$ that converges in norm to $\mathbf{T}^{\lambda}$. Each $\mathbf{T}^{\lambda,k}$ has the form
\begin{equation*}
\mathbf{T}^{\lambda, k} \asymp \bigoplus_{\rho \in \mathbb{Z}_+^m} I \otimes T^{\lambda+|\rho|+\ell,k}, \quad \textup{with} \quad 
T^{\lambda+|\rho|+\ell,k} = T_{a_k}^{\lambda+|\rho|+\ell} + N_k^{\lambda+|\rho|+\ell},
\end{equation*}
where $a_k \in {\rm BUC}(\mathbb{B}^{n-\ell})$ for each $k \in \mathbb{N}$ and $N_k^{\lambda+|\rho|+\ell}$ belongs to the semi-commutator ideal and with 
$N_k^{\lambda+|\rho|+\ell} \to 0$ as $|\rho| \to \infty$, for fixed $k$ . By Lemma \ref{le:sup-norm}, each sequence 
$\{T^{\lambda+|\rho|+\ell,k}\}_{k \in \mathbb{N}}$ is also fundamental in $\mathcal{L}(\mathcal{A}^2_{\lambda+|\rho|+\ell}(\mathbb{B}^{n-\ell}))$.

In particular, for any $\varepsilon > 0$ there exists $N_0 \in \mathbb{N}$ such that for all $k,\, m > N_0$ we have the following estimate (uniformly in $\rho$): 
\begin{equation*}
 \|T^{\lambda+|\rho|+\ell,k} - T^{\lambda+|\rho|+\ell,m}\| \leq \|\mathbf{T}^{\lambda,k} - \mathbf{T}^{\lambda,m}\| < \textstyle{\frac{\varepsilon}{2}}.
\end{equation*}
Observe now that $T^{\lambda+|\rho|+\ell,k} - T^{\lambda+|\rho|+\ell,m} = T_{(a_k-a_m)}^{\lambda+|\rho|+\ell} + (N_k^{\lambda+|\rho|+\ell} - N_m^{\lambda+|\rho|+\ell})$. For any fixed $k,\, m > N_0$ we 
pass to the limit as $|\rho| \to \infty$. Then, taking into account Lemma \ref{le:sup-norm} together with the observation that both $\|N_k^{\lambda+|\rho|+\ell}\|$ and $\|N_m^{\lambda+|\rho|+\ell}\|$ tend to 
$0$ as $|\rho| \to \infty$, we have
\begin{equation*}
 \lim_{|\rho| \to \infty} \|T^{\lambda+|\rho|+\ell,k} - T^{\lambda+|\rho|+\ell,m}\| =
 \lim_{|\rho| \to \infty} \|T_{(a_k-a_m)}^{\lambda+|\rho|+\ell}\| = \|a_k-a_m\|_{\infty} \leq \textstyle{\frac{\varepsilon}{2}}.
\end{equation*}
Hence the function sequence $\{a_k\}_{k \in \mathbb{N}}$ is fundamental, and thus it converges to some $a \in {\rm BUC}(\mathbb{B}^{n-\ell})$. Then, $\|T_{a_k}^{\lambda+|\rho|+\ell} - T_a^{\lambda+|\rho|+\ell}\| \leq \|a_k-a\|_{\infty}$ implies that, for each fixed $\rho$, the sequence $\{T_{a_k}^{\lambda+|\rho|+\ell}\}_{k \in \mathbb{N}}$ converges in norm to the operator $T_a^{\lambda+|\rho|+\ell}$. Thus, for each fixed $\rho$, the sequence of 
operators $\{N_k^{\lambda+|\rho|+\ell}\}_{k \in \mathbb{N}}$, being the difference of two convergent sequences $\{T^{\lambda+|\rho|+\ell,k}\}_{k \in \mathbb{N}}$ and \\ $\{T_{a_k}^{\lambda+|\rho|+\ell}\}_{k \in \mathbb{N}}$, converges in norm to an operator $N^{\lambda+|\rho|+\ell}$ from the semi-commutator ideal. 

This implies the desired representation 
\begin{equation*}
  \mathbf{T}^{\lambda} \asymp \bigoplus_{\rho \in \mathbb{Z}_+^m} I \otimes T^{\lambda+|\rho|+\ell} = \bigoplus_{\rho \in \mathbb{Z}_+^m} I \otimes (T_a^{\lambda+|\rho|+\ell} + N^{\lambda+|\rho|+\ell}).
\end{equation*}
\par 
It remains to prove that $\|N^{\lambda+|\rho|+\ell}\| \to 0$ as $|\rho| \to \infty$. To do this we use the standard $\frac{\varepsilon}{3}$-trick. Using the representation
$$T^{\lambda+|\rho|+\ell} - T^{\lambda+|\rho|+\ell,k} = T_{(a-a_k)}^{\lambda+|\rho|+\ell} + 
N^{\lambda+|\rho|+\ell} - N_k^{\lambda+|\rho|+\ell},$$ 
we obtain:
\begin{equation*}
 \|N^{\lambda+|\rho|+\ell}\| \leq \|T^{\lambda+|\rho|+\ell} - T^{\lambda+|\rho|+\ell,k}\| + \| T_{(a-a_k)}^{\lambda+|\rho|+\ell}\| + \|N_k^{\lambda+|\rho|+\ell}\|.
\end{equation*}
Now, given any $\varepsilon > 0$, there exists $k \in \mathbb{N}$ such that
\begin{equation*}
 \|T^{\lambda+|\rho|+\ell} - T^{\lambda+|\rho|+\ell,k}\| \leq \|\mathbf{T}^{\lambda} - \mathbf{T}^{\lambda,k}\| < \textstyle{\frac{\varepsilon}{3}} \quad \textup{ and} \quad
 \| T_{(a-a_k)}^{\lambda+|\rho|+\ell}\| \leq \|a - a_k\|_{\infty} < \textstyle{\frac{\varepsilon}{3}},
\end{equation*}
both uniformly in $\rho$. With this fixed $k$ and $\frac{\varepsilon}{3}$, there exists $N_0 \in \mathbb{N}$ such that for all $|\rho| > N_0$ we have that $\|N_k^{\lambda+|\rho|+\ell}\| < \frac{\varepsilon}{3}$. The above implies that for all $|\rho| > N_0$ we have
\begin{equation*}
 \|N^{\lambda+|\rho|+\ell}\| < 3 \, \textstyle{\frac{\varepsilon}{3}} = \varepsilon.
\end{equation*}

To prove the uniqueness of the representation assume that 
\begin{equation*}
 \mathbf{T}^{\lambda} \asymp \bigoplus_{\rho \in \mathbb{Z}_+^m} I \otimes (T_{a_1}^{\lambda+|\rho|+\ell} + N_1^{\lambda+|\rho|+\ell}) = \bigoplus_{\rho \in \mathbb{Z}_+^m} I \otimes (T_{a_2}^{\lambda+|\rho|+\ell} + N_2^{\lambda+|\rho|+\ell}),
\end{equation*}
where $a_j \in {\rm BUC}(\mathbb{B}^{n-\ell})$, each $N_j^{\lambda+|\rho|+\ell}$ belongs to the semi-commutator ideal, and $N_j^{\lambda+|\rho|+\ell} \to 0$ as $|\rho| \to \infty$, for $j=1,2$. Then 
\begin{equation} \label{eq:0}
 T_{a_1-a_2}^{\lambda+|\rho|+\ell} + (N_1^{\lambda+|\rho|+\ell} - N_2^{\lambda+|\rho|+\ell}) = 0, \qquad \textup{ for \ each} \quad \rho \in \mathbb{Z}_+^m,
\end{equation}
and thus
\begin{equation*}
 0 = \lim_{|\rho| \rightarrow \infty} \|T_{a_1-a_2}^{\lambda+|\rho|+\ell} + (N_1^{\lambda+|\rho|+\ell} - N_2^{\lambda+|\rho|+\ell})\| = \lim_{|\rho| \rightarrow \infty} \|T_{a_1-a_2}^{\lambda+|\rho|+\ell} \| = \|a_1 - a_2\|_{\infty}.
\end{equation*}
Thus $a_1 = a_2$, and (\ref{eq:0}) implies that $N_1^{\lambda+|\rho|+\ell} = N_2^{\lambda+|\rho|+\ell}$ for all $|\rho| \in \mathbb{Z}_+$.
\end{proof}

\begin{rem} \label{rem:no-one-dim} {\rm
As it follows from \cite{Le_2010}, for each weight parameter $\mu > -1$, the  commutator ideal of the algebra $\mathcal{T}_{\mu}({\rm BUC}(\mathbb{B}^{n-\ell})) 
= \mathcal{T}_{\mu}(L^{\infty}(\mathbb{B}^{n-\ell}))$ coincides with the whole algebra $\mathcal{T}_{\mu}(L^{\infty}(\mathbb{B}^{n-\ell}))$.
 In particular, this implies that the $C^*$-algebra $\mathcal{T}_{\mu}({\rm BUC}(\mathbb{B}^{n-\ell}))$ does not have non-trivial one-dimensional representations. }
\end{rem}

It is instructive to see that, although $\boldsymbol{\mathcal{T}}_{\lambda}({\rm BUC}(\mathbb{B}^{n})) = \boldsymbol{\mathcal{T}}_{\lambda}(L^{\infty}(\mathbb{B}^{n}))$, 
we have the strict inclusion
\begin{equation}\label{Strict_inclusion_algebras}
\boldsymbol{\mathcal{T}}_{\lambda}(1,{\rm BUC}(\mathbb{B}^{n-\ell})) \subsetneq \boldsymbol{\mathcal{T}}_{\lambda}(1,L^{\infty}(\mathbb{B}^{n-\ell})).
\end{equation}
In fact, this follows from the next result: 
\begin{lem}
 If $a \in L^{\infty}(\mathbb{B}^{n-\ell})$ and $\mathbf{T}^{\lambda}_{f_a} \in \boldsymbol{\mathcal{T}}_{\lambda}(1,{\rm BUC}(\mathbb{B}^{n-\ell}))$, then necessarily the function $a$ belongs to ${\rm BUC}(\mathbb{B}^{n-\ell})$.
\end{lem}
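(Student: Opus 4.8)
The plan is to confront the two descriptions of $\mathbf{T}^{\lambda}_{f_a}$ now at our disposal and to read off from their comparison that $a$ must agree almost everywhere with a $\mathrm{BUC}$ function. On the one hand, Corollary \ref{co:f_ac} (applied with trivial first symbol, so that $a$ plays the role of $c$) gives the block-diagonal form
\[
 \mathbf{T}^{\lambda}_{f_a} \asymp \bigoplus_{\rho \in \mathbb{Z}_+^m} I \otimes T_a^{\lambda+|\rho|+\ell},
\]
where $T_a^{\lambda+|\rho|+\ell}$ is the Toeplitz operator with symbol $a$ on $\mathcal{A}^2_{\lambda+|\rho|+\ell}(\mathbb{B}^{n-\ell})$. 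On the other hand, since by hypothesis $\mathbf{T}^{\lambda}_{f_a}$ lies in $\boldsymbol{\mathcal{T}}_{\lambda}(1,\mathrm{BUC}(\mathbb{B}^{n-\ell}))$, Theorem \ref{th:rep} supplies the unique representation
\[
 \mathbf{T}^{\lambda}_{f_a} \asymp \bigoplus_{\rho\in\mathbb{Z}_+^m} I \otimes \bigl(T_b^{\lambda+|\rho|+\ell} + N^{\lambda+|\rho|+\ell}\bigr),
\]
with $b \in \mathrm{BUC}(\mathbb{B}^{n-\ell})$, each $N^{\lambda+|\rho|+\ell}$ in the semi-commutator ideal, and $\|N^{\lambda+|\rho|+\ell}\| \to 0$ as $|\rho| \to \infty$.

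First I would observe that the block decomposition of Remark \ref{re:abrev} is literally the restriction of $U\mathbf{T}^{\lambda}_{f_a} U^{*}$ to the invariant summands $\mathcal{H}_{\rho}$, hence is uniquely determined by the operator. Comparing the two displays summand by summand and cancelling the (nonzero) factor $I$ on $\mathscr{H}_{\rho}$ yields, for every $\rho$,
\[
 T_a^{\lambda+|\rho|+\ell} = T_b^{\lambda+|\rho|+\ell} + N^{\lambda+|\rho|+\ell}, \qquad \text{that is} \qquad T_{a-b}^{\lambda+|\rho|+\ell} = N^{\lambda+|\rho|+\ell}.
\]
Consequently $\|T_{a-b}^{\lambda+|\rho|+\ell}\| = \|N^{\lambda+|\rho|+\ell}\| \to 0$ as $|\rho| \to \infty$, and since $|\rho|$ runs through all of $\mathbb{Z}_+$, the weight $\mu = \lambda+|\rho|+\ell$ tends to $\infty$. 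Now I would pass to the Berezin transform: since $\mathcal{B}_{\mu}$ is a contraction into $L^{\infty}$ and $\mathcal{B}_{\mu}[T_g^{\mu}] = \mathcal{B}_{\mu}[g]$, one has
\[
 \|\mathcal{B}_{\mu}[a-b]\|_{\infty} = \|\mathcal{B}_{\mu}[T_{a-b}^{\mu}]\|_{\infty} \le \|T_{a-b}^{\mu}\| \longrightarrow 0 .
\]
On the other hand, as $\mu \to \infty$ the Berezin transform of an $L^{\infty}$ symbol converges to that symbol almost everywhere (the quantization limit already invoked in Lemma \ref{le:sup-norm}, cf.\ \cite{BC1}), so $\mathcal{B}_{\mu}[a-b] \to a-b$ pointwise a.e. Combining the two statements gives $(a-b)(z) = 0$ for almost every $z$, whence $a = b$ a.e. Since $b \in \mathrm{BUC}(\mathbb{B}^{n-\ell})$, this shows $a \in \mathrm{BUC}(\mathbb{B}^{n-\ell})$, as claimed.

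The only genuinely delicate point is the a.e.\ convergence $\mathcal{B}_{\mu}[g] \to g$ as $\mu \to \infty$ for a general $g = a-b \in L^{\infty}(\mathbb{B}^{n-\ell})$: one must know that the normalized reproducing kernels concentrate strongly enough that the transform recovers $g$ at every Lebesgue point, and not merely on the dense subclass $\mathrm{BUC}$. Granting this (which is exactly the mechanism underlying Lemma \ref{le:sup-norm}), the argument is complete. Alternatively, one can bypass the pointwise statement by using the norm identity $\lim_{\mu\to\infty}\|T_g^{\mu}\| = \|g\|_{\infty}$, valid for every $g \in L^{\infty}(\mathbb{B}^{n-\ell})$, and read off $\|a-b\|_{\infty} = 0$ directly from $\|T_{a-b}^{\mu}\| \to 0$; this mirrors the computation used in the uniqueness part of Theorem \ref{th:rep}.
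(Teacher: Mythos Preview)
Your approach is sound and genuinely different from the paper's, but the ``delicate point'' you flag is real and is \emph{not} covered by Lemma~\ref{le:sup-norm}: that lemma (via \cite[Proposition~4.4]{BC1}) only gives $\mathcal{B}_{\mu}[c]\to c$ uniformly for $c\in\mathrm{BUC}$, whereas you need it (or the equivalent norm identity $\lim_{\mu}\|T_g^{\mu}\|=\|g\|_{\infty}$) for a general $g=a-b\in L^{\infty}$. Your proposed alternative at the end therefore runs into the same gap; the uniqueness argument in Theorem~\ref{th:rep} that you invoke also lives entirely in $\mathrm{BUC}$. The a.e.\ convergence $\mathcal{B}_{\mu}[g]\to g$ for $g\in L^{\infty}$ is indeed true (approximate-identity/Lebesgue-point reasoning), but it is an input not established in the paper, so you should either supply a reference or a short proof.

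The paper sidesteps this issue entirely. Rather than sending $\mu\to\infty$, it fixes a single weight $\mu$ and proves $T^{\mu}_a=T^{\mu}_c$ there, which forces $a=c$ directly (a Toeplitz operator with $L^{\infty}$ symbol vanishes only if its symbol does). To get this, it invokes the $(m,\mu)$-Berezin transform of \cite{BHV}: with $\mu$ fixed one writes
\[
\|T^{\mu}_a-T^{\mu}_c\|\le \|T^{\mu}_a-T^{\mu}_{\mathcal{B}_{m,\mu}(a)}\|+\|\mathcal{B}_{m,\mu}(a)-\mathcal{B}_{m,\mu}(c)\|_{\infty}+\|\mathcal{B}_{m,\mu}(c)-c\|_{\infty}
\]
and lets $m\to\infty$. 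The first term vanishes by \cite[Theorem~A.1]{BHV} (valid for arbitrary $a\in L^{\infty}$), the second because $\mathcal{B}_{m,\mu}[a]=\mathcal{B}_{m+\mu}[a]\to c$ (this is how $c$ was produced), and the third by \cite[Proposition~4.4]{BC1} since $c\in\mathrm{BUC}$. So both proofs ultimately import one nontrivial external fact about $L^{\infty}$ symbols; yours is the a.e.\ convergence of the Berezin transform, the paper's is the norm approximation $T^{\mu}_{\mathcal{B}_{m,\mu}(a)}\to T^{\mu}_a$. Your route is conceptually cleaner once that ingredient is in hand; the paper's route keeps all limits inside tools already quoted elsewhere in the article.
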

\begin{proof}
By Theorem \ref{th:rep}, we have
\begin{equation*}
 \mathbf{T}^{\lambda}_{f_a} \asymp \bigoplus_{\rho \in \mathbb{Z}_+^m} I \otimes T^{\lambda+|\rho|+\ell}_a = \bigoplus_{\rho \in \mathbb{Z}_+^m} I \otimes (T_c^{\lambda+|\rho|+\ell} + N^{\lambda+|\rho|+\ell}),
\end{equation*}
for some $c \in {\rm BUC}(\mathbb{B}^{n-\ell})$, and $\|N^{\lambda+|\rho|+\ell}\| \to 0$ as $|\rho| \to \infty$.

For simplicity we let $\mu:= \lambda+|\rho|+\ell$. Then
\begin{equation} \label{eq:B(a)=B(c)}
 c = \lim_{\mu \to \infty} \mathcal{B}_{\mu}[c] = \lim_{\mu \to \infty} \mathcal{B}_{\mu}[T_c^{\mu} + N^{\mu}] = \lim_{\mu \to \infty}  \mathcal{B}_{\mu}[T^{\mu}_a] = \lim_{\mu \to \infty} \mathcal{B}_{\mu}[a].
\end{equation}
Let $\varphi_z$ denote the usual automorphism (M\"obius transform) of $\mathbb{B}^{n-\ell}$ hat interchanges $0$ and $z$. Besides the standard Berezin transform
\begin{equation*}
 \mathcal{B}_{\mu}[a](z) = \mathcal{B}_{\mu}[T^{\mu}_a](z) = \int_{\mathbb{B}^{n-\ell}} a \circ \varphi_z(w) \, dv_{\mu}(w),
\end{equation*}
for a Toeplitz operator $T^{\mu}_a$, with $a \in L^{\infty}(\mathbb{B}^{n-\ell})$, we will consider the {\it $(m,\mu)$-Berezin transform} (see, \cite{BHV} for details), defined for 
Toeplitz operators $T^{\mu}_a$ with $a \in L^{\infty}(\mathbb{B}^{n-\ell})$ as follows:
\begin{equation*}
 \mathcal{B}_{m,\mu}[T^{\mu}_a](z) = \mathcal{B}_{m,\mu}[a](z) = \int_{\mathbb{B}^{n-\ell}} a \circ \varphi_z(w) \, dv_{m+\mu}(w).
\end{equation*}
Note that $\mathcal{B}_{\mu}(T^{\mu}_a) = \mathcal{B}_{0,\mu}(T^{\mu}_a)$ and
$\mathcal{B}_{m,\mu}(T^{\mu}_a) = \mathcal{B}_{0,m+\mu}(T^{\mu}_a)$.

Estimate now
\begin{eqnarray*}
 0 &\leq& \|T^{\mu}_a - T^{\mu}_c\|  \leq \|T^{\mu}_a - T^{\mu}_{\mathcal{B}_{m,\mu}(a)}\| + \|T^{\mu}_{\mathcal{B}_{m,\mu}(a)} - T^{\mu}_{\mathcal{B}_{m,\mu}(c)}\| + \|T^{\mu}_{\mathcal{B}_{m,\mu}(c)} - T^{\mu}_c\| \\
 &\leq& \|T^{\mu}_a - T^{\mu}_{\mathcal{B}_{m,\mu}(a)}\| + \|\mathcal{B}_{m,\mu}(a) - \mathcal{B}_{m,\mu}(c) \|_{\infty} + \|\mathcal{B}_{m,\mu}(c) - c\|_{\infty}.
\end{eqnarray*}
Let $m \to \infty$, then $\|T^{\mu}_a - T^{\mu}_{\mathcal{B}_{m,\mu}(a)}\| \to 0$ by \cite[Theorem A.1]{BHV}. Moreover, 
\begin{align*}
\|\mathcal{B}_{m,\mu}(a) - \mathcal{B}_{m,\mu}(c) \|_{\infty} &= \|\mathcal{B}_{0,m+\mu}(a) - \mathcal{B}_{0,m+\mu}(c) \|_{\infty} \to 0, \hspace{3ex} (\textup{by (\ref{eq:B(a)=B(c)})}) \\
\|\mathcal{B}_{m,\mu}(c) - c\|_{\infty} &= \|\mathcal{B}_{0,m+\mu}(c) - c\|_{\infty} \to 0 \hspace{3ex} (\textup{by  \cite[Proposition 4.4]{BC1}}).
\end{align*}
That is, $\|T^{\mu}_a - T^{\mu}_c\| = \|T^{\mu}_{a-c}\| = 0$ and thus $a = c \in {\rm BUC}(\mathbb{B}^{n-\ell})$. 
\end{proof}

Note that for $a \in L^{\infty}(\mathbb{B}^{n-\ell})$ the function $f_a$ belongs to $L^{\infty}(\mathbb{B}^{n})$ and thus $\mathbf{T}^{\lambda}_{f_a}$ can be norm approximated by Toeplitz operators with ${\rm BUC}(\mathbb{B}^{n})$-symbols, but, as the previous lemma shows, it cannot be done with ${\rm BUC}(\mathbb{B}^{n- \ell})$-symbols. This shows the strict inclusion (\ref{Strict_inclusion_algebras}). 

For each $\mu = \lambda + |\rho| + \ell$, denote by $$\mathcal{T}^{(n-\ell)}_{\mu}({\rm BUC}(\mathbb{B}^{n-\ell}))$$ the algebra that consists of all operators $T^{\mu} = T^{\mu}_c + N^{\mu}$ (see (\ref{eq:T_dir-sum}) and (\ref{eq:unique})) coming from the restriction of the algebra $\boldsymbol{\mathcal{T}}_{\lambda}(1,{\rm BUC}(\mathbb{B}^{n-\ell}))$ onto its invariant subspaces.

\begin{prop} \label{prop:coincide}
 The algebra $\mathcal{T}^{(n-\ell)}_{\mu}({\rm BUC}(\mathbb{B}^{n-\ell}))$ coincides with the whole $C^*$-algebra
\[\mathcal{T}_{\mu}({\rm BUC}(\mathbb{B}^{n-\ell})) = \mathcal{T}_{\mu}(L^{\infty}(\mathbb{B}^{n-\ell})).\]
\end{prop}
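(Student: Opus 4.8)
The plan is to realize $\mathcal{T}^{(n-\ell)}_{\mu}({\rm BUC}(\mathbb{B}^{n-\ell}))$ as the range of a single $*$-homomorphism and then exploit the fact that the range of a $*$-homomorphism between $C^*$-algebras is automatically closed. Fix a multi-index $\rho \in \mathbb{Z}_+^m$ with $\lambda + |\rho| + \ell = \mu$. By Remark \ref{re:abrev}, every $\mathbf{T}^{\lambda} \in \boldsymbol{\mathcal{T}}_{\lambda}(1,{\rm BUC}(\mathbb{B}^{n-\ell}))$ satisfies $U\mathbf{T}^{\lambda}U^* = \bigoplus_{\sigma} (I \otimes T^{\lambda+|\sigma|+\ell})$, so the map $\pi_{\rho} : \mathbf{T}^{\lambda} \mapsto T^{\mu}$ which first compresses the block-diagonal operator $U\mathbf{T}^{\lambda}U^*$ to its $\rho$-th block $I_{\mathscr{H}_{\rho}} \otimes T^{\mu}$ and then strips off the amplification $T^{\mu} \mapsto I_{\mathscr{H}_{\rho}} \otimes T^{\mu}$ is well defined. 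Compression to a fixed block of a block-diagonal operator is multiplicative and $*$-preserving, and the amplification is an isometric $*$-isomorphism onto its range; hence $\pi_{\rho}$ is a contractive $*$-homomorphism, and by its very definition via (\ref{eq:T_dir-sum}) and (\ref{eq:unique}) its range is exactly $\mathcal{T}^{(n-\ell)}_{\mu}({\rm BUC}(\mathbb{B}^{n-\ell}))$.

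The inclusion $\mathcal{T}^{(n-\ell)}_{\mu}({\rm BUC}(\mathbb{B}^{n-\ell})) \subseteq \mathcal{T}_{\mu}({\rm BUC}(\mathbb{B}^{n-\ell}))$ is then immediate from Theorem \ref{th:rep}: each element of the former has the form $T^{\mu} = T_c^{\mu} + N^{\mu}$ with $c \in {\rm BUC}(\mathbb{B}^{n-\ell})$ and $N^{\mu}$ in the semi-commutator ideal of $\mathcal{T}_{\mu}({\rm BUC}(\mathbb{B}^{n-\ell}))$, and both summands already lie in $\mathcal{T}_{\mu}({\rm BUC}(\mathbb{B}^{n-\ell}))$.

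For the reverse inclusion I would argue as follows. By Corollary \ref{co:f_ac} the generator $\mathbf{T}^{\lambda}_{f_c}$ of $\boldsymbol{\mathcal{T}}_{\lambda}(1,{\rm BUC}(\mathbb{B}^{n-\ell}))$ satisfies $\pi_{\rho}(\mathbf{T}^{\lambda}_{f_c}) = T_c^{\mu}$, so the range of $\pi_{\rho}$ contains every generator $T_c^{\mu}$, $c \in {\rm BUC}(\mathbb{B}^{n-\ell})$, of $\mathcal{T}_{\mu}({\rm BUC}(\mathbb{B}^{n-\ell}))$. Since $\pi_{\rho}$ is a $*$-homomorphism between $C^*$-algebras, its range is a closed $*$-subalgebra of $\mathcal{L}(\mathcal{A}^2_{\mu}(\mathbb{B}^{n-\ell}))$; containing all these generators, it must contain the whole $C^*$-algebra they generate, namely $\mathcal{T}_{\mu}({\rm BUC}(\mathbb{B}^{n-\ell}))$. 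Combining the two inclusions yields the claimed equality, while the identification $\mathcal{T}_{\mu}({\rm BUC}(\mathbb{B}^{n-\ell})) = \mathcal{T}_{\mu}(L^{\infty}(\mathbb{B}^{n-\ell}))$ is the one already recorded (via \cite{Suarez_2007}).

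The only genuinely delicate point, and the step I would take care over, is the automatic closedness of the range of $\pi_{\rho}$: this is the standard fact that a $*$-homomorphism of $C^*$-algebras has closed range, its image being $*$-isomorphic to the quotient by its kernel. It is precisely this fact that spares us from having to lift norm-limits of $\rho$-components back to operators in $\boldsymbol{\mathcal{T}}_{\lambda}(1,{\rm BUC}(\mathbb{B}^{n-\ell}))$ by hand. Everything else reduces to the routine verification that compression to a single block is a $*$-homomorphism and that the amplification is isometric.
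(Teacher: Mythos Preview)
Your proof is correct and follows essentially the same approach as the paper: both define the restriction-to-level map (your $\pi_{\rho}$, the paper's $\iota_k$), observe that it is a $*$-homomorphism of $C^*$-algebras and hence has closed range, and then use that the range contains all generators $T_c^{\mu}$. The only cosmetic difference is that the paper phrases the conclusion as ``both algebras share the same dense subalgebra of finite sums of finite products of Toeplitz operators,'' whereas you split it into two inclusions and invoke Theorem~\ref{th:rep} for one of them; the latter is not needed, since the shared dense subalgebra already gives both inclusions at once.
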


\begin{proof}
 Given $\mu = \lambda + k + \ell$, observe, first, that the mapping
\begin{eqnarray*}
 \iota_k&:& \boldsymbol{\mathcal{T}}_{\lambda}(1,{\rm BUC}(\mathbb{B}^{n-\ell})) \ \longrightarrow \ \mathcal{L}\big{(} \mathcal{A}^2_{\lambda+k+\ell}(\mathbb{B}^{n-\ell})\big{)}, \\
 && \mathbf{T}^{\lambda} \asymp \bigoplus_{\rho \in \mathbb{Z}_+^m} I \otimes T^{\lambda+|\rho|+\ell} \ \longmapsto \ T^{\lambda+k +\ell}
\end{eqnarray*}
is a morphism (representation) of the $C^*$-algebra $\boldsymbol{\mathcal{T}}_{\lambda}(1,{\rm BUC}(\mathbb{B}^{n-\ell}))$. Thus the algebra $\mathcal{T}^{(n-\ell)}_{\mu}({\rm BUC}(\mathbb{B}^{n-\ell}))$, being its image, is norm closed. To finish the proof it is sufficient to mention that both algebras $\mathcal{T}^{(n-\ell)}_{\mu}({\rm BUC}(\mathbb{B}^{n-\ell}))$ and $\mathcal{T}_{\mu}({\rm BUC}(\mathbb{B}^{n-\ell}))$ share the same dense subalgebra, which consists of all elements of the form
\begin{equation*}
 \sum_{j =1}^k \prod_{p=1}^{ q_j} T^{\mu}_{c_{j,p}}, \quad \textrm{where}  \ \ c_{j,p} \in {\rm BUC}(\mathbb{B}^{n-\ell}) \ \ \textrm{and}  \ \ k,\,q_j \in \mathbb{N},
\end{equation*} 
being the images under the mapping $\iota_k$, of the elements
\begin{equation*}
 \sum_{j =1}^k \prod_{p=1}^{q_j} \mathbf{T}^{\lambda}_{f_{c_{j,p}}}
\end{equation*}
of the algebra $\boldsymbol{\mathcal{T}}_{\lambda}(1,{\rm BUC}(\mathbb{B}^{n-\ell}))$.
\end{proof}

\begin{cor} \label{co:irred-mu}
 The $C^*$-algebra $\mathcal{T}^{(n-\ell)}_{\mu}({\rm BUC}(\mathbb{B}^{n-\ell})) =\mathcal{T}_{\mu}({\rm BUC}(\mathbb{B}^{n-\ell}))$ is irreducible and contains the ideal $\mathcal{K}(\mathcal{A}^2_{\mu}(\mathbb{B}^{n-\ell}))$ of all compact operators on $\mathcal{A}^2_{\mu}(\mathbb{B}^{n-\ell})$.
\end{cor}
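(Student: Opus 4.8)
The plan is to prove both assertions together by the following route: first establish irreducibility by computing the commutant of the algebra, then exhibit a single nonzero compact operator lying in it, and finally invoke the classical fact that an irreducible $C^{*}$-subalgebra of $\mathcal{L}(H)$ containing a nonzero compact operator already contains the whole ideal $\mathcal{K}(H)$. Throughout I write $H = \mathcal{A}^2_{\mu}(\mathbb{B}^{n-\ell})$ and $P$ for its Bergman projection, and I use Proposition~\ref{prop:coincide} to identify the algebra with $\mathcal{T}_{\mu}(L^{\infty}(\mathbb{B}^{n-\ell}))$, so that every $T^{\mu}_{c}$ with $c \in L^{\infty}(\mathbb{B}^{n-\ell})$ is available. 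In particular the coordinate functions $z_1,\dots,z_{n-\ell}$ and their conjugates lie in $C(\overline{\mathbb{B}^{n-\ell}}) \subseteq {\rm BUC}(\mathbb{B}^{n-\ell})$, so the operators $T^{\mu}_{z_i}$ and $T^{\mu}_{\bar z_i} = (T^{\mu}_{z_i})^{*}$ belong to the algebra; since $z_i$ is holomorphic, $T^{\mu}_{z_i} = M_{z_i}$ is simply multiplication by $z_i$ and $T^{\mu}_{\bar z_i} = M_{z_i}^{*}$.

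For irreducibility I would compute the commutant. Let $S \in \mathcal{L}(H)$ commute with every element of the algebra, in particular with all $M_{z_i}$. Then $S$ commutes with $M_{p}$ for every polynomial $p$, and applying $S$ to $z^{\alpha} = M_{z^{\alpha}}\mathbf{1}$ (where $\mathbf{1}$ is the constant function) shows that $S f = (S\mathbf{1})\,f$ on polynomials; boundedness then forces $g := S\mathbf{1}$ to be a bounded holomorphic multiplier and $S = M_{g}$. Imposing in addition that $S$ commutes with $M_{z_i}^{*} = T^{\mu}_{\bar z_i}$ and evaluating $[M_g,T^{\mu}_{\bar z_i}]$ on $\mathbf{1}$, while using $T^{\mu}_{\bar z_i}\mathbf{1} = P(\bar z_i) = 0$, yields $P(\bar z_i\,g) = 0$ for every $i$. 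Since $\langle P(\bar z_i g), z^{\alpha}\rangle_{\mu} = \langle g, z^{\alpha+e_i}\rangle_{\mu}$, this forces every nonconstant Taylor coefficient of $g$ to vanish, so $g$ is constant and $S \in \mathbb{C}\,I$. Hence the algebra is irreducible.

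Next I would produce a nonzero compact operator, the natural candidate being the self-commutator $[T^{\mu}_{z_1},T^{\mu}_{\bar z_1}] = M_{z_1}M_{z_1}^{*} - M_{z_1}^{*}M_{z_1}$. Because $z_1$ is holomorphic one has $T^{\mu}_{\bar z_1}T^{\mu}_{z_1} = T^{\mu}_{|z_1|^2}$ exactly, and the semicommutators $T^{\mu}_{f}T^{\mu}_{g} - T^{\mu}_{fg}$ are compact for symbols $f,g \in C(\overline{\mathbb{B}^{n-\ell}})$; subtracting the two orderings (with $f=z_1,\,g=\bar z_1$ and $f=\bar z_1,\,g=z_1$, the terms $T^{\mu}_{|z_1|^2}$ cancelling) shows that $[T^{\mu}_{z_1},T^{\mu}_{\bar z_1}]$ is compact. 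It is nonzero because testing on the constant function gives $\langle [T^{\mu}_{z_1},T^{\mu}_{\bar z_1}]\mathbf{1},\mathbf{1}\rangle_{\mu} = \|M_{z_1}^{*}\mathbf{1}\|_{\mu}^{2} - \|z_1\|_{\mu}^{2} = -\|z_1\|_{\mu}^{2} < 0$, again using $M_{z_1}^{*}\mathbf{1} = P(\bar z_1) = 0$.

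Finally, an irreducible $C^{*}$-subalgebra of $\mathcal{L}(H)$ that contains one nonzero compact operator contains all of $\mathcal{K}(H)$; applying this standard result with $H = \mathcal{A}^2_{\mu}(\mathbb{B}^{n-\ell})$ completes the proof. The one genuinely nontrivial ingredient is the compactness of the self-commutator $[T^{\mu}_{z_1},T^{\mu}_{\bar z_1}]$, i.e.\ the essential normality of the coordinate multiplication $M_{z_1}$, which I would settle through the classical compactness of semicommutators of Toeplitz operators with symbols continuous on $\overline{\mathbb{B}^{n-\ell}}$ (the same mechanism underlying the semicommutator estimates used elsewhere in the paper); the commutant computation and the final $C^{*}$-algebraic invocation are routine.
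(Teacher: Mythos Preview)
Your proof is correct. The paper does not actually give a proof of this corollary: it simply states it and immediately afterwards remarks ``Recall that the subalgebra $\mathcal{T}_{\mu}(C(\overline{\mathbb{B}}^{n-\ell}))$ of $\mathcal{T}_{\mu}({\rm BUC}(\mathbb{B}^{n-\ell}))$ already contains $\mathcal{K}(\mathcal{A}^2_{\mu}(\mathbb{B}^{n-\ell}))$,'' treating both irreducibility and the presence of the compacts as classical facts about the Toeplitz $C^*$-algebra with continuous symbols on the ball (the Coburn/Venugopalkrishna extension). Your argument is precisely a self-contained unpacking of that classical fact: the commutant computation with the shifts $M_{z_i}$ and their adjoints, the compactness of $[T^{\mu}_{z_1},T^{\mu}_{\bar z_1}]$ via compact semicommutators for $C(\overline{\mathbb{B}}^{n-\ell})$-symbols, and the standard principle that an irreducible $C^*$-algebra containing one nonzero compact contains them all. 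So you are not taking a genuinely different route, but rather supplying the details the paper leaves to the literature; the only external input you still quote---compactness of semicommutators for continuous symbols---is exactly the mechanism the paper has in mind.
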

Recall that the subalgebra $\mathcal{T}_{\mu}(C(\overline{\mathbb{B}}^{n-\ell}))$ of $\mathcal{T}_{\mu}({\rm BUC}(\mathbb{B}^{n-\ell}))$ already contains $\mathcal{K}(\mathcal{A}^2_{\mu}(\mathbb{B}^{n-\ell}))$. We now describe (some of) the irreducible representations of the algebra $\boldsymbol{\mathcal{T}}_{\lambda}(1,{\rm BUC}(\mathbb{B}^{n-\ell}))$. 

\vspace{1mm}
\noindent{\bf 1.  Infinite dimensional representations:} By Corollary \ref{co:irred-mu}, for each $k \in \mathbb{Z}_+$ the representation
\begin{eqnarray} \label{eq:iota_k}
\iota_k &:& \boldsymbol{\mathcal{T}}_{\lambda}(1,{\rm BUC}(\mathbb{B}^{n-\ell})) \ \longrightarrow \ \mathcal{L}\big{(} \mathcal{A}^2_{\lambda+k+\ell}(\mathbb{B}^{n-\ell})\big{)} \\ \nonumber
 && \mathbf{T}^{\lambda} \asymp \bigoplus_{\rho \in \mathbb{Z}_+^m} I \otimes (T_c^{\lambda+|\rho| +\ell} + N^{\lambda+|\rho|+\ell}) \ \longmapsto \ T_c^{\lambda+k +\ell} + N^{\lambda+k+\ell},
\end{eqnarray}
whose image is $\mathcal{T}_{\lambda+k+\ell}({\rm BUC}(\mathbb{B}^{n-\ell}))$, is irreducible.

\medskip

We note that  the representations $\{ \iota_k\}_{k\in \mathbb{Z}_+}$ are not pairwise unitary equivalent. To see this we fix $k \in \mathbb{Z}_+$ and consider 
\begin{equation}\label{iota_k_radial}
\iota_k\big{(} {\bf T}_{f_{1-|z|^2}}^{\lambda}\big{)}= \iota_k \Big{(} \bigoplus_{\rho \in \mathbb{Z}_+^m} I \otimes T_{1-|z|^2}^{\lambda+|\rho|+\ell} \Big{)}
= T_{1-|z|^2}^{\lambda+k+\ell} \in \mathcal{L} \big{(} \mathcal{A}^2_{\lambda+k+\ell}(\mathbb{B}^{n-\ell}) \big{)}. 
\end{equation}
\par
Since the symbol $a(z):= 1-|z|^2$ is radial, the Toeplitz operator $T_{1-|z|^2}^{\lambda+k+\ell}$ is diagonal with respect to the standard monomial orthonormal basis 
of $\mathcal{A}^2_{\lambda+k+1}(\mathbb{B}^{n-\ell})$.
By \cite[Corollary~3.2]{GKW} the norm of this operator is given by
\begin{align*}
\|T_{1 - |z|^2}^{\lambda+k+\ell}\| &= \sup\limits_{m \in \mathbb{Z}_+} \frac{1}{B(m+n-\ell,\lambda+k+\ell+1)} \int_0^1 (1-r)^{\lambda+k+\ell+1}r^{m+n-\ell-1} \, dr\\
&= \sup\limits_{m \in \mathbb{Z}_+} \frac{1}{B(m+n-\ell,\lambda+k+\ell+1)}B(m+n-\ell,\lambda+k+\ell+2)\\
&= \sup\limits_{m \in \mathbb{Z}_+} \frac{\lambda+k+\ell+1}{m+n+\lambda+k+1} = \frac{\lambda+k+\ell+1}{n+\lambda+k+1},
\end{align*}
where $B(x,y) = \frac{\Gamma(x)\Gamma(y)}{\Gamma(x+y)}$ denotes the Beta function. \label{page:not} Since $n >\ell$ the norm of $T_{1 - |z|^2}^{\lambda+k+\ell}$ depends on $k$. 
Hence the representations $\iota_{k_1}$ and $\iota_{k_2}$ are not unitarily equivalent for $k_1 \neq k_2$.
\vspace{1ex}\\
{\bf 2. One-dimensional representations via a "quantization effect":} We consider the Berezin transform $\mathcal{B}_{\mu}$ for bounded operators on $\mathcal{A}_{\mu}^2(\mathbb{B}^{n-\ell})$.
In particular, we may consider a Toeplitz operator $T=T^{\mu}_c$ with symbol $c \in {\rm BUC}(\mathbb{B}^{n-\ell})$. 
By \cite[Lemma 4.5]{MiSuWi} the function $\mathcal{B}_{\mu}[c]$ belongs to  ${\rm BUC}(\mathbb{B}^{n-\ell})$ and thus admits a continuous extension to the compact set $M(\textup{BUC}):=M({\rm BUC}(\mathbb{B}^{n-\ell}))$ of maximal ideals of the $C^*$-algebra ${\rm BUC}(\mathbb{B}^{n-\ell})$.
\begin{lem} \label{le:rho}
The map $\nu \, : \ \boldsymbol{\mathcal{T}}_{\lambda}(1,{\rm BUC}(\mathbb{B}^{n-\ell})) \rightarrow \ {\rm BUC}(\mathbb{B}^{n-\ell}) = C(M(\textup{BUC}))$, defined by
\begin{equation*}
  \nu \, : \ \mathbf{T}^{\lambda} \asymp \bigoplus_{\rho \in \mathbb{Z}_+^m} I \otimes T^{\lambda+|\rho|+\ell} \ \ \longmapsto \ \ \lim_{|\rho|\to \infty} \mathcal{B}_{\lambda+|\rho|+\ell}(T^{\lambda+|\rho|+\ell})
\end{equation*}
is a continuous $*$-homomorphism of the $C^*$-algebra $\boldsymbol{\mathcal{T}}_{\lambda}(1,{\rm BUC}(\mathbb{B}^{n-\ell}))$ onto $C(M(\textup{BUC}))$. 
\end{lem}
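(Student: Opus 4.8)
The plan is to reduce everything to the unique representation of Theorem~\ref{th:rep} and to show that $\nu$ is nothing but the map reading off the ${\rm BUC}$-symbol $c$. First I would take $\mathbf{T}^{\lambda} \in \boldsymbol{\mathcal{T}}_{\lambda}(1,{\rm BUC}(\mathbb{B}^{n-\ell}))$ and write, by Theorem~\ref{th:rep}, its unique representation $\mathbf{T}^{\lambda} \asymp \bigoplus_{\rho} I \otimes (T_c^{\lambda+|\rho|+\ell} + N^{\lambda+|\rho|+\ell})$ with $c \in {\rm BUC}(\mathbb{B}^{n-\ell})$ and $\|N^{\lambda+|\rho|+\ell}\| \to 0$ as $|\rho|\to\infty$. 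Abbreviating $\mu = \lambda+|\rho|+\ell$ and using linearity of the Berezin transform together with the contraction estimate $\|\mathcal{B}_{\mu}(A)\|_{\infty} \le \|A\|$ (immediate from $\|k_z^{\mu}\|=1$), one gets $\mathcal{B}_{\mu}(T^{\mu}) = \mathcal{B}_{\mu}[c] + \mathcal{B}_{\mu}(N^{\mu})$, where $\|\mathcal{B}_{\mu}(N^{\mu})\|_{\infty} \le \|N^{\mu}\| \to 0$, while $\mathcal{B}_{\mu}[c] \to c$ uniformly by \cite[Proposition 4.4]{BC1}. Hence the limit defining $\nu$ exists in the sup-norm and $\nu(\mathbf{T}^{\lambda}) = c \in {\rm BUC}(\mathbb{B}^{n-\ell})$; in particular $\nu$ is well defined with values in $C(M(\textup{BUC}))$.

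With this identification the linear and $*$-properties are immediate: linearity follows from linearity of each $\mathcal{B}_{\mu}$ and of the limit, and $\nu(\mathbf{T}^*) = \overline{\nu(\mathbf{T})}$ follows from $\mathcal{B}_{\mu}(A^*) = \overline{\mathcal{B}_{\mu}(A)}$. Continuity is also clear, since by Lemma~\ref{le:sup-norm} we have $\|\nu(\mathbf{T}^{\lambda})\|_{\infty} = \|c\|_{\infty} = \lim_{|\rho|\to\infty} \|\mathcal{B}_{\mu}(T^{\mu})\|_{\infty} \le \sup_{\rho} \|T^{\lambda+|\rho|+\ell}\| = \|\mathbf{T}^{\lambda}\|$, so $\nu$ is a contraction. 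Surjectivity is handled by exhibiting preimages: for an arbitrary $c \in {\rm BUC}(\mathbb{B}^{n-\ell})$, Corollary~\ref{co:f_ac} gives $\mathbf{T}_{f_c}^{\lambda} \asymp \bigoplus_{\rho} I \otimes T_c^{\lambda+|\rho|+\ell}$, whence $\nu(\mathbf{T}_{f_c}^{\lambda}) = \lim_{|\rho|\to\infty} \mathcal{B}_{\mu}[c] = c$; since ${\rm BUC}(\mathbb{B}^{n-\ell}) = C(M(\textup{BUC}))$ this shows $\nu$ is onto.

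The multiplicativity of $\nu$ is the main point. Given two elements with representations $\mathbf{T}_j^{\lambda} \asymp \bigoplus_{\rho} I\otimes(T_{c_j}^{\mu} + N_j^{\mu})$, $j=1,2$, I would expand the product on each component as
\[
(T_{c_1}^{\mu} + N_1^{\mu})(T_{c_2}^{\mu} + N_2^{\mu}) = T_{c_1}^{\mu} T_{c_2}^{\mu} + T_{c_1}^{\mu} N_2^{\mu} + N_1^{\mu} T_{c_2}^{\mu} + N_1^{\mu} N_2^{\mu}.
\]
The three terms containing a factor $N_j^{\mu}$ have operator norm bounded by $\|c_i\|_{\infty}\|N_j^{\mu}\|$ (respectively $\|N_1^{\mu}\|\|N_2^{\mu}\|$), hence tend to $0$ as $|\rho|\to\infty$, so their Berezin transforms vanish in the limit by the contraction estimate. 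For the remaining term I would write $T_{c_1}^{\mu} T_{c_2}^{\mu} = T_{c_1 c_2}^{\mu} + (T_{c_1}^{\mu} T_{c_2}^{\mu} - T_{c_1 c_2}^{\mu})$; the semi-commutator in parentheses has norm tending to $0$ as $\mu\to\infty$ by \cite[Theorem 3.8]{BaHaVa}, while $\mathcal{B}_{\mu}[c_1 c_2] \to c_1 c_2$ by \cite[Proposition 4.4]{BC1}. Collecting these estimates yields $\nu(\mathbf{T}_1^{\lambda}\mathbf{T}_2^{\lambda}) = c_1 c_2 = \nu(\mathbf{T}_1^{\lambda})\nu(\mathbf{T}_2^{\lambda})$. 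The one genuinely nontrivial input here is the asymptotic vanishing of the semi-commutator norm from \cite{BaHaVa}, which is precisely the \emph{quantization} mechanism; everything else is routine norm bookkeeping relying on $\|N^{\mu}\|\to 0$.
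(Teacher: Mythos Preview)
Your proof is correct and follows essentially the same route as the paper: identify $\nu(\mathbf{T}^{\lambda})$ with the unique symbol $c$ from Theorem~\ref{th:rep} via the Berezin-transform limit (using $\|\mathcal{B}_{\mu}(N^{\mu})\|_{\infty}\le\|N^{\mu}\|\to 0$ and \cite[Proposition~4.4]{BC1}), read off continuity from Lemma~\ref{le:sup-norm}, and get surjectivity from $\nu(\mathbf{T}_{f_c}^{\lambda})=c$. The only cosmetic difference is that the paper dispatches the $*$-homomorphism property in one line (``obviously a $*$-homomorphism''), implicitly relying on the uniqueness in Theorem~\ref{th:rep} together with the semi-commutator argument already carried out in its proof, whereas you spell out multiplicativity explicitly by expanding the product and invoking \cite[Theorem~3.8]{BaHaVa} again; this is the same mechanism, just made visible.
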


\begin{proof}
By Theorem \ref{th:rep} all operators $T^{\lambda+|\rho|+\ell}$ are of the form $T_c^{\lambda+|\rho|+\ell} + N^{\lambda+|\rho|+\ell}$ with a common function 
$c \in {\rm BUC}(\mathbb{B}^{n-\ell})$ and operators $N^{\lambda+|\rho|+\ell}$ such that  $\|N^{\lambda+|\rho|+\ell}\| \to 0$ as $|\rho| \to \infty$. From 
$$|\mathcal{B}_{\lambda+|\rho|+\ell}(N^{\lambda+|\rho|+\ell})| \leq \|N^{\lambda+|\rho|+\ell}\|$$
it follows that $\lim_{|\rho| \to \infty} \mathcal{B}_{\lambda+|\rho|+\ell}(N^{\lambda+|\rho|+\ell}) = 0$. 
Then,  \cite[Proposition 4.4]{BC1} implies that 
$$\lim_{|\rho|\to \infty} \mathcal{B}_{\lambda+|\rho|+\ell}(T^{\lambda+|\rho|+\ell})(z) = c(z)$$ 
uniformly on $\mathbb{B}^{n-\ell}$. That is,
\begin{align*}
& \lim_{|\rho|\to \infty} \mathcal{B}_{\lambda+|\rho|+\ell}(T^{\lambda+|\rho|+\ell}) = \lim_{|\rho|\to \infty} \mathcal{B}_{\lambda+|\rho|+\ell}(T_c^{\lambda+|\rho|+\ell} + N^{\lambda+|\rho|+\ell}) \\
& = \lim_{|\rho|\to \infty} \mathcal{B}_{\lambda+|\rho|+\ell}(T_c^{\lambda+|\rho|+\ell})
 = c \in {\rm BUC}(\mathbb{B}^{n-\ell}) = C(M(\textup{BUC})),
\end{align*}
and $\nu$ is well-defined. It is also onto as for each $c \in {\rm BUC}(\mathbb{B}^{n-\ell})$ we have that $\nu(\boldsymbol{T}_{f_c}^{\lambda}) = c$. 
Moreover, the mapping $\nu$ is obviously a $*$-homomorphism, and its continuity follows from
\begin{align*}
\|c\|_{\infty} &= \sup_{z_2 \in \mathbb{B}^{n-\ell}} |c(z_2)| = \lim_{|\rho|\to \infty} \|T_c^{\lambda+|\rho|+\ell}\| \\
&\leq \sup_{|\rho| \in \mathbb{Z}_+}\| T_c^{\lambda+|\rho|+\ell} + N^{\lambda+|\rho|+\ell} \| = \|\mathbf{T}^{\lambda}\|. \qedhere
\end{align*}
\end{proof}

\begin{cor} \label{co:rho_z}
For each $\eta \in M(\textup{BUC})$, the map $\nu_{\eta}  :  \boldsymbol{\mathcal{T}}_{\lambda}(1,{\rm BUC}(\mathbb{B}^{n-\ell})) \rightarrow \mathbb{C}$, defined by
\begin{equation*} 
  \nu_{\eta} \, : \ \mathbf{T}^{\lambda} \ \ \longmapsto \ \ \nu(\mathbf{T}^{\lambda}) = c \ \ \longmapsto \ \ c(\eta) \in \mathbb{C},
\end{equation*}
is a one-dimensional representation of the $C^*$-algebra $\boldsymbol{\mathcal{T}}_{\lambda}(1,{\rm BUC}(\mathbb{B}^{n-\ell}))$.
\end{cor}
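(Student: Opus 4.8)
The plan is to recognize $\nu_{\eta}$ as a composition of two maps that are already known (or immediately seen) to be $*$-homomorphisms, so that the desired conclusion follows formally from the functoriality of such compositions. Concretely, I would write $\nu_{\eta} = \mathrm{ev}_{\eta} \circ \nu$, where $\nu$ is the map supplied by Lemma \ref{le:rho} and $\mathrm{ev}_{\eta} \colon C(M(\textup{BUC})) \to \mathbb{C}$ is evaluation at the point $\eta$, namely $\mathrm{ev}_{\eta}(c) = c(\eta)$. This is exactly the two-step factorization already displayed in the statement of the corollary.

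The first step is to invoke Lemma \ref{le:rho}, which guarantees that $\nu \colon \boldsymbol{\mathcal{T}}_{\lambda}(1,{\rm BUC}(\mathbb{B}^{n-\ell})) \to C(M(\textup{BUC}))$ is a (continuous, surjective) $*$-homomorphism; nothing further about $\nu$ needs to be reproved. The second step is to observe that, since $M(\textup{BUC})$ is the (compact Hausdorff) maximal ideal space of the commutative $C^*$-algebra ${\rm BUC}(\mathbb{B}^{n-\ell})$, the Gelfand identification ${\rm BUC}(\mathbb{B}^{n-\ell}) = C(M(\textup{BUC}))$ turns each $\eta \in M(\textup{BUC})$ into a character: the point evaluation $\mathrm{ev}_{\eta}$ is linear, multiplicative, and involution-preserving, i.e.\ a nonzero $*$-homomorphism onto $\mathbb{C}$. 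The third step is the purely formal remark that the composition of two $*$-homomorphisms is again a $*$-homomorphism, so $\nu_{\eta} = \mathrm{ev}_{\eta} \circ \nu$ is a $*$-homomorphism from the algebra $\boldsymbol{\mathcal{T}}_{\lambda}(1,{\rm BUC}(\mathbb{B}^{n-\ell}))$ into $\mathbb{C}$, that is, a one-dimensional representation.

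There is essentially no hard step here: once Lemma \ref{le:rho} is in hand, the corollary is immediate. The only points worth an explicit word are that $\nu_{\eta}$ is genuinely a representation rather than the zero map (this holds because $\nu$ is surjective onto $C(M(\textup{BUC}))$ and each $\mathrm{ev}_{\eta}$ is a nonzero character, so $\nu_{\eta}$ maps onto all of $\mathbb{C}$), and that continuity is automatic for a $*$-homomorphism between $C^*$-algebras. Thus the main content of the result is entirely contained in the preceding lemma, and the corollary records its translation into the language of one-dimensional representations via Gelfand theory.
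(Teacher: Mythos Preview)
Your proposal is correct and matches the paper's approach: the paper states this corollary without proof, treating it as an immediate consequence of Lemma~\ref{le:rho}, and your factorization $\nu_{\eta} = \mathrm{ev}_{\eta} \circ \nu$ is exactly the argument implicit in the two-step arrow displayed in the statement itself.
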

Let us  compare the above result with Remark \ref{rem:no-one-dim}. The $C^*$-algebras $\mathcal{T}_{\mu}({\rm BUC}(\mathbb{B}^{n-\ell}))$ and 
$\boldsymbol{\mathcal{T}}_{\lambda}(1,{\rm BUC}(\mathbb{B}^{n-\ell}))$ are generated by all Toeplitz operators $T^{\mu}_c$ and 
$\mathbf{T}^{\lambda}_{f_c}$, with $c \in {\rm BUC}(\mathbb{B}^{n-\ell})$, respectively. At the same time, the first algebra does not have non-trivial 
one-dimensional representations while the second one does have a lot of them.

\begin{rem} \label{rem:rec}
Given an operator $\boldsymbol{T}^{\lambda} \asymp \bigoplus_{\rho \in \mathbb{Z}_+^m} I \otimes T^{\lambda+|\rho|+\ell} \in \boldsymbol{\mathcal{T}}_{\lambda}(1,{\rm BUC}(\mathbb{B}^{n-\ell}))$, 
the result of Lemma~\ref{le:rho} permits us to recover its unique (by Theorem \ref{th:rep}) representation (\ref{eq:unique}). Indeed, all necessary data 
for the representation (\ref{eq:unique}) are given by
\begin{eqnarray*}
 c &=& \nu(\boldsymbol{T}^{\lambda}) = \lim_{|\rho|\to \infty} \mathcal{B}_{\lambda+|\rho|+\ell}(T^{\lambda+|\rho|+\ell}) \in {\rm BUC}(\mathbb{B}^{n-\ell}), \\
 N^{\lambda+|\rho|+\ell} &=& T^{\lambda+|\rho|+\ell} - T_c^{\lambda+|\rho|+\ell}.
\end{eqnarray*}
\end{rem}
%%%%%%%%%%%%%%%%%%%%%%%%%%%%%%%%%%%%%%%%%%%%%%%%%%%%%%%%%%%%%%%%%%%%%%%%%%%%%%%%%%
\section{Symbols of vanishing oscillation at the boundary}%$\partial \mathbb{B}^{n-\ell}$} 
\label{se:VO}
\setcounter{equation}{0}
%%%%%%%%%%%%%%%%%%%%%%%%%%%%%%%%%%%%%%%%%%%%%%%%%%%%%%%%%%%%%%%%%%%%%%%%%%%%%%%%%%%%%
In the present section we consider the subalgebra  $\boldsymbol{\mathcal{T}}_{\lambda}(1,{\rm VO}_{\partial}(\mathbb{B}^{n-\ell}))$ of \\ $\boldsymbol{\mathcal{T}}_{\lambda}(1,{\rm BUC}(\mathbb{B}^{n-\ell}))$, which is generated by all Toeplitz operators $\boldsymbol{T}_{f_c}^{\lambda}$ acting on $\mathcal{A}_{\lambda}^2(\mathbb{B}^n)$ with $c \in {\rm VO}_{\partial}(\mathbb{B}^{n-\ell})$. Presenting the results, we will follow the lines of \cite[Section 4]{BV2}. Recall that
\begin{equation*}
 {\rm VO}_{\partial}(\mathbb{B}^{n-\ell}) = {\rm BUC}(\mathbb{B}^{n-\ell}) \cap{\rm VMO}_{\partial}(\mathbb{B}^{n-\ell})
\end{equation*}
is the maximal $C^*$-subalgebra of ${\rm BUC}(\mathbb{B}^{n-\ell})$ possessing the compact semi-commutator property: {\it for all $c_1,\, c_2 \in   
{\rm VO}_{\partial}(\mathbb{B}^{n-\ell})$ the semi-commutator $T^{\mu}_{c_1} T^{\mu}_{c_2} - T^{\mu}_{c_1c_2}$ is compact.} This leads to the following version of Theorem \ref{th:rep}:

\begin{thm} \label{th:rep_VO}
Each operator $\mathbf{T}^{\lambda} \in \boldsymbol{\mathcal{T}}_{\lambda}(1,{\rm VO}_{\partial}(\mathbb{B}^{n-\ell}))$, in the direct sum decomposition {\rm (\ref{eq:T_dir-sum})}, admits the unique representation
\begin{equation} \label{eq:unique:VO}
\mathbf{T}^{\lambda} \asymp \bigoplus_{\rho \in \mathbb{Z}_+^m} I \otimes (T_c^{\lambda+|\rho|+\ell} + K_{|\rho|}),
\end{equation}
where $c \in {\rm VO}_{\partial}(\mathbb{B}^{n-\ell})$, each $K_{|\rho|}$ is compact, and $\|K_{|\rho|}\| \to 0$ as $|\rho| \to \infty$.
\end{thm}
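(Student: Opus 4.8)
The plan is to mirror, almost verbatim, the proof of Theorem \ref{th:rep}, replacing everywhere the semi-commutator ideal of $\mathcal{T}_{\mu}({\rm BUC}(\mathbb{B}^{n-\ell}))$ by the ideal $\mathcal{K}(\mathcal{A}^2_{\mu}(\mathbb{B}^{n-\ell}))$ of compact operators. The decisive new ingredient is the compact semi-commutator property of ${\rm VO}_{\partial}(\mathbb{B}^{n-\ell})$ recalled just above: for $c_1,c_2 \in {\rm VO}_{\partial}(\mathbb{B}^{n-\ell})$ the semi-commutator $T^{\mu}_{c_1}T^{\mu}_{c_2} - T^{\mu}_{c_1c_2}$ is compact. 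This upgrades every ``semi-commutator ideal element'' appearing in the proof of Theorem \ref{th:rep} to an honest compact operator, and since ${\rm VO}_{\partial}(\mathbb{B}^{n-\ell}) \subset {\rm BUC}(\mathbb{B}^{n-\ell})$ all the analytic estimates used there remain available.

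First I would argue on the dense subalgebra $\boldsymbol{\mathcal{D}}_{\lambda}$ of $\boldsymbol{\mathcal{T}}_{\lambda}(1,{\rm VO}_{\partial}(\mathbb{B}^{n-\ell}))$ consisting of finite sums of finite products of generators $\mathbf{T}^{\lambda}_{f_c}$ with $c \in {\rm VO}_{\partial}(\mathbb{B}^{n-\ell})$. By Corollary \ref{co:f_ac} each such product decomposes componentwise as $\bigoplus_{\rho} I \otimes (T^{\mu}_{c_1}\cdots T^{\mu}_{c_p})$ with $\mu = \lambda+|\rho|+\ell$. Reducing to two factors by induction on $p$, I write $T^{\mu}_a T^{\mu}_b = T^{\mu}_{ab} + N_{|\rho|}$. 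Here $ab \in {\rm VO}_{\partial}(\mathbb{B}^{n-\ell})$ because ${\rm VO}_{\partial}(\mathbb{B}^{n-\ell})$ is an algebra, while $N_{|\rho|}$ is compact by the compact semi-commutator property; moreover $\|N_{|\rho|}\| \to 0$ as $|\rho|\to\infty$ by \cite[Theorem 3.8]{BaHaVa}, exactly as in Theorem \ref{th:rep}. Hence every element of $\boldsymbol{\mathcal{D}}_{\lambda}$ has the asserted form, with symbol in ${\rm VO}_{\partial}(\mathbb{B}^{n-\ell})$ and compact, norm-decaying components.

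Next I would pass to a general $\mathbf{T}^{\lambda}$ as a norm limit of elements $\mathbf{T}^{\lambda,k} \in \boldsymbol{\mathcal{D}}_{\lambda}$ and run the same limiting argument as in Theorem \ref{th:rep}, controlling the restrictions $T^{\mu,k}$ uniformly in $\rho$ via Lemma \ref{le:sup-norm}. The symbol sequence $\{a_k\}$ is Cauchy in $\|\cdot\|_{\infty}$ and converges to some $c$; since each $a_k \in {\rm VO}_{\partial}(\mathbb{B}^{n-\ell})$ and this space is norm closed, the limit satisfies $c \in {\rm VO}_{\partial}(\mathbb{B}^{n-\ell})$. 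For each fixed $\rho$ the operators $N^{\mu}_k = T^{\mu,k} - T^{\mu}_{a_k}$ are compact and converge in norm, so their limit $K_{|\rho|}$ is compact because $\mathcal{K}(\mathcal{A}^2_{\mu}(\mathbb{B}^{n-\ell}))$ is a closed ideal. The decay $\|K_{|\rho|}\| \to 0$ follows from the same $\frac{\varepsilon}{3}$-trick, and uniqueness is proved verbatim: if two such representations agree, then $\lim_{|\rho|\to\infty}\|T^{\mu}_{a_1-a_2}\| = \|a_1-a_2\|_{\infty} = 0$ forces $a_1 = a_2$ and hence $K_{1,|\rho|} = K_{2,|\rho|}$ for all $\rho$.

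I do not anticipate a genuine obstacle, since the statement is essentially a refinement of Theorem \ref{th:rep} rather than a new result. The only points demanding care are the two closedness facts — that the limit symbol stays in ${\rm VO}_{\partial}(\mathbb{B}^{n-\ell})$ (closedness of ${\rm VO}_{\partial}$ as a $C^*$-subalgebra of ${\rm BUC}$) and that each $K_{|\rho|}$ is compact (closedness of the ideal of compacts) — together with checking that the compact semi-commutator property, which holds at each individual weight $\mu = \lambda+|\rho|+\ell$, is genuinely compatible with the uniform-in-$\rho$ norm estimates required to pass to the inductive limit. The latter compatibility is what makes the $\frac{\varepsilon}{3}$-argument for $\|K_{|\rho|}\|\to 0$ go through unchanged.
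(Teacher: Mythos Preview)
Your proposal is correct and matches the paper's approach exactly: the paper does not even supply a separate proof of Theorem \ref{th:rep_VO}, treating it as an immediate variant of Theorem \ref{th:rep} once the compact semi-commutator property of ${\rm VO}_{\partial}(\mathbb{B}^{n-\ell})$ is recalled. Your write-up is in fact more detailed than what the paper provides, and the two closedness checks you single out (that ${\rm VO}_{\partial}$ is a closed subalgebra and that the compacts form a closed ideal) are precisely the points that make the transcription of the proof of Theorem \ref{th:rep} go through.
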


We thus obtain the following family of infinite dimensional irreducible representations $\iota_{\rho}$:

\begin{prop} \label{iota_rho_VO}
 For each $k \in \mathbb{Z}_+$ the restriction of the representation $\iota_k$ in \eqref{eq:iota_k} to the $C^*$-algebra $\boldsymbol{\mathcal{T}}_{\lambda}(1,{\rm VO}_{\partial}(\mathbb{B}^{n-\ell}))$ defines an irreducible representation on $\mathcal{A}^2_{\lambda+|\rho|+\ell}(\mathbb{B}^{n-\ell})$. Moreover,
\begin{equation} \label{eq:iota_k=}
 \iota_{k}(\boldsymbol{\mathcal{T}}_{\lambda}(1,{\rm VO}_{\partial}(\mathbb{B}^{n-\ell}))) = \mathcal{T}_{\lambda+k+\ell}({\rm VO}_{\partial}(\mathbb{B}^{n-\ell}))
\end{equation}
and 
\begin{equation} \label{eq:T_lambda+k+ell}
 \mathcal{T}_{\lambda+k+\ell}({\rm VO}_{\partial}(\mathbb{B}^{n-\ell})) = \{T_c^{\lambda+k+\ell} + K : \ c \in {\rm VO}_{\partial}(\mathbb{B}^{n-\ell}), \ \  K \ \mathrm{is \ compact}\}.
\end{equation}
Furthermore, the representations $\iota_{k_1}$ and $\iota_{k_2}$ are not unitarily equivalent for $k_1 \neq k_2$.
\end{prop}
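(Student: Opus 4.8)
The plan is to establish the four assertions of the proposition — irreducibility, the identification \eqref{eq:iota_k=}, the structural description \eqref{eq:T_lambda+k+ell}, and the non-equivalence — in turn, leaning on the machinery already developed for the BUC algebra in Section \ref{Section_4}. Since $\iota_k$ is the restriction of the $*$-homomorphism \eqref{eq:iota_k} (shown to be a representation in the discussion around Proposition \ref{prop:coincide}) and $\boldsymbol{\mathcal{T}}_{\lambda}(1,{\rm VO}_{\partial}(\mathbb{B}^{n-\ell}))$ is by definition the closed algebra generated by the $\mathbf{T}^{\lambda}_{f_c}$ with $c \in {\rm VO}_{\partial}(\mathbb{B}^{n-\ell})$, continuity of $\iota_k$ shows that its image is the closed $*$-algebra generated by $\iota_k(\mathbf{T}^{\lambda}_{f_c}) = T_c^{\lambda+k+\ell}$, which is exactly $\mathcal{T}_{\lambda+k+\ell}({\rm VO}_{\partial}(\mathbb{B}^{n-\ell}))$; this gives \eqref{eq:iota_k=}. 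For irreducibility I would note that $C(\overline{\mathbb{B}}^{n-\ell}) \subset {\rm VO}_{\partial}(\mathbb{B}^{n-\ell})$, so the image contains $\mathcal{T}_{\lambda+k+\ell}(C(\overline{\mathbb{B}}^{n-\ell}))$ and hence the ideal $\mathcal{K}(\mathcal{A}^2_{\lambda+k+\ell}(\mathbb{B}^{n-\ell}))$ of compact operators (as recalled before Corollary \ref{co:irred-mu}); any $C^*$-algebra containing all compacts acts irreducibly, which settles the first claim.

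To prove the structural identity \eqref{eq:T_lambda+k+ell}, write $\mu := \lambda+k+\ell$ and set $\mathcal{S} := \{T_c^{\mu} + K : c \in {\rm VO}_{\partial}(\mathbb{B}^{n-\ell}),\ K \in \mathcal{K}(\mathcal{A}^2_{\mu}(\mathbb{B}^{n-\ell}))\}$. The inclusion $\mathcal{S} \subset \mathcal{T}_{\mu}({\rm VO}_{\partial}(\mathbb{B}^{n-\ell}))$ is immediate, since the $T_c^{\mu}$ and all compacts lie in the algebra. For the reverse inclusion it suffices to check that $\mathcal{S}$ is a closed $*$-subalgebra containing the generators. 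It is self-adjoint because $(T_c^{\mu})^* = T_{\bar c}^{\mu}$ with $\bar c \in {\rm VO}_{\partial}$, and it is closed under products precisely by the compact semi-commutator property recalled before the theorem:
\[(T_{c_1}^{\mu}+K_1)(T_{c_2}^{\mu}+K_2) = T_{c_1c_2}^{\mu} + \big(T_{c_1}^{\mu}T_{c_2}^{\mu} - T_{c_1c_2}^{\mu}\big) + K,\]
where $c_1 c_2 \in {\rm VO}_{\partial}$, the bracketed semi-commutator is compact, and $K$ gathers the remaining compact terms.

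The delicate point, and what I expect to be the main obstacle, is the norm-closedness of $\mathcal{S}$. Here I would use that $T_c^{\mu}$ is compact exactly when $c$ vanishes at the boundary, so that the essential norm of $T_c^{\mu}$ coincides with the boundary supremum of $c$; consequently the map $c \mapsto T_c^{\mu} + \mathcal{K}$ descends to an isometry from ${\rm VO}_{\partial}(\mathbb{B}^{n-\ell})$ modulo its boundary-vanishing ideal onto its image in the Calkin algebra, and this image is therefore norm-closed. A Cauchy sequence $T_{c_n}^{\mu}+K_n$ then forces the classes $[T_{c_n}^{\mu}]$ to converge to some $[T_c^{\mu}]$ with $c \in {\rm VO}_{\partial}$, so the limit differs from $T_c^{\mu}$ by a compact operator and lies in $\mathcal{S}$. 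This yields \eqref{eq:T_lambda+k+ell}, following the lines of \cite[Section 4]{BV2}.

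Finally, for the non-equivalence of $\iota_{k_1}$ and $\iota_{k_2}$ when $k_1 \neq k_2$, I reuse the radial-symbol computation on page \pageref{page:not}. Since $1-|z|^2 \in C(\overline{\mathbb{B}}^{n-\ell}) \subset {\rm VO}_{\partial}(\mathbb{B}^{n-\ell})$, the operator $\mathbf{T}^{\lambda}_{f_{1-|z|^2}}$ belongs to $\boldsymbol{\mathcal{T}}_{\lambda}(1,{\rm VO}_{\partial}(\mathbb{B}^{n-\ell}))$, and by \eqref{iota_k_radial}
\[\|\iota_k(\mathbf{T}^{\lambda}_{f_{1-|z|^2}})\| = \|T_{1-|z|^2}^{\lambda+k+\ell}\| = \frac{\lambda+k+\ell+1}{n+\lambda+k+1},\]
which genuinely depends on $k$ because $n > \ell$. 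As unitarily equivalent representations must assign equal operator norms to every algebra element, $\iota_{k_1}$ and $\iota_{k_2}$ cannot be equivalent for $k_1 \neq k_2$. All of this except the norm-closedness in the second step reduces fairly directly to the corresponding BUC results of Section \ref{Section_4}; it is only in establishing the precise form $\{T_c^{\mu}+K\}$ that the specific structure of ${\rm VO}_{\partial}$ — the compactness criterion for $T_c^{\mu}$ and the resulting essential-norm identity — is really needed.
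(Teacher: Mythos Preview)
Your proof is correct, and for the irreducibility, the identification \eqref{eq:iota_k=}, and the non-equivalence of the $\iota_k$, it matches the paper's argument essentially word for word (the paper simply points back to Proposition \ref{prop:coincide} and to the radial-symbol computation \eqref{iota_k_radial}).

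The one place where your route differs from the paper's is in establishing \eqref{eq:T_lambda+k+ell}. The paper obtains this in one line from Theorem \ref{th:rep_VO}: that theorem already says every $\mathbf{T}^{\lambda} \in \boldsymbol{\mathcal{T}}_{\lambda}(1,{\rm VO}_{\partial}(\mathbb{B}^{n-\ell}))$ has the form $\bigoplus I \otimes (T_c^{\lambda+|\rho|+\ell} + K_{|\rho|})$, so applying $\iota_k$ and using the inclusion $\mathcal{K}(\mathcal{A}^2_{\mu}(\mathbb{B}^{n-\ell})) \subset \mathcal{T}_{\mu}({\rm VO}_{\partial})$ gives both inclusions immediately. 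You instead work entirely at the single level $\mu = \lambda+k+\ell$: you verify directly that $\mathcal{S} = \{T_c^{\mu}+K\}$ is a $*$-subalgebra via the compact semi-commutator property, and for closedness you use that $c + C_0 \mapsto T_c^{\mu} + \mathcal{K}$ is an injective $*$-homomorphism between $C^*$-algebras, hence an isometry onto its image. This is a perfectly valid alternative; it has the advantage of being self-contained at level $\mu$ and not invoking the global representation Theorem \ref{th:rep_VO}, at the cost of redoing a small piece of what that theorem already packages. A minor phrasing point: when you say ``continuity of $\iota_k$ shows that its image is closed'', the real reason is that $*$-homomorphisms between $C^*$-algebras have closed range; continuity alone would not suffice.
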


\begin{proof}
 The proof of (\ref{eq:iota_k=}) literally follows the arguments of the proof of Proposition \ref{prop:coincide}. The algebra $\mathcal{T}_{\lambda+k+\ell}(C(\overline{\mathbb{B}}^{n-\ell}))$ is a subalgebra of $\mathcal{T}_{\lambda+k+\ell}({\rm VO}_{\partial}(\mathbb{B}^{n-\ell}))$ and, as it was already mentioned, contains the ideal $\mathcal{K}(\mathcal{A}^2_{\mu}(\mathbb{B}^{n-\ell}))$. 
 Thus 
 $$\mathcal{K}(\mathcal{A}^2_{\mu}(\mathbb{B}^{n-\ell})) \subset \mathcal{T}_{\lambda+k+\ell}({\rm VO}_{\partial}(\mathbb{B}^{n-\ell})),$$
 and the representation $\iota_k$ of the algebra $\mathcal{T}_{\lambda+k+\ell}({\rm VO}_{\partial}(\mathbb{B}^{n-\ell}))$ is irreducible.
Description (\ref{eq:T_lambda+k+ell}) follows then from (\ref{eq:unique:VO}), (\ref{eq:iota_k=}), and the inclusion \\ $\mathcal{K}(\mathcal{A}^2_{\mu}(\mathbb{B}^{n-\ell})) \subset \mathcal{T}_{\lambda+k+\ell}({\rm VO}_{\partial}(\mathbb{B}^{n-\ell}))$.

Finally, as $a(z)= 1-|z|^2$ belongs to ${\rm VO}_{\partial}(\mathbb{B}^{n-\ell})$, the arguments below \eqref{iota_k_radial} show that the representations $\iota_{k_1}$ and $\iota_{k_2}$ are not unitarily equivalent for $k_1 \neq k_2$.
\end{proof}
Let $M(\textup{VO})$ denote the maximal ideal space of ${\rm VO}_{\partial}(\mathbb{B}^{n-\ell})$. Note that $M(\textup{VO})$ densely contains $\mathbb{B}^{n-\ell}$ via evaluation maps, and let $M_{\partial}:=M(\textup{VO}) \setminus \mathbb{B}^{n-\ell}$.
It is easy to check (see \cite{BCZ} for the unweighted case) that 
\begin{equation*}
\mathcal{T}_{\lambda+|\rho|+\ell}({\rm VO}_{\partial}(\mathbb{B}^{n-\ell})) / \mathcal{K}(\mathcal{A}^2_{\lambda+|\rho|+\ell}(\mathbb{B}^{n-\ell})) \cong {\rm VO}_{\partial}(\mathbb{B}^{n-\ell}) / C_0(\mathbb{B}^{n-\ell}) \cong C(M_{\partial})
\end{equation*}
via
\[T_c^{\lambda+|\rho|+\ell} + \mathcal{K}(\mathcal{A}^2_{\lambda+|\rho|+\ell}(\mathbb{B}^{n-\ell})) \ \longmapsto  \ c + C_0(\mathbb{B}^{n-\ell}) 
 \ \cong \ c|_{M_{\partial}}.\]
This leads to the following family of one-dimensional irreducible representations:

\begin{cor}
For every $(\eta,k) \in M_{\partial} \times \mathbb{Z}_+$ the map
\[\pi_{\eta,k} : \mathbf{T}^{\lambda} \asymp \bigoplus_{\rho \in \mathbb{Z}_+^m} I \otimes (T_c^{\lambda+|\rho|+\ell} + K_{|\rho|}) \ \stackrel{ \iota_k}{\longmapsto} \ T_c^{\lambda+k+\ell} 
+ K_{k} \ \longmapsto \ c(\eta)\]
defines a one-dimensional irreducible representation. Moreover,  $\pi_{\eta_1,k}$ and $\pi_{\eta_2,k}$ are unitarily equivalent if and only if $\eta_1 = \eta_2$.
\end{cor}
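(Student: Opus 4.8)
The plan is to realize $\pi_{\eta,k}$ as a composition of three $*$-homomorphisms and then read off both assertions essentially formally.

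First I would observe that $\pi_{\eta,k}$ factors as
\[
  \pi_{\eta,k} = \mathrm{ev}_{\eta} \circ q_k \circ \iota_k,
\]
where $\iota_k \colon \boldsymbol{\mathcal{T}}_{\lambda}(1,{\rm VO}_{\partial}(\mathbb{B}^{n-\ell})) \to \mathcal{T}_{\lambda+k+\ell}({\rm VO}_{\partial}(\mathbb{B}^{n-\ell}))$ is the irreducible representation of Proposition \ref{iota_rho_VO}, the map $q_k$ is the quotient onto $\mathcal{T}_{\lambda+k+\ell}({\rm VO}_{\partial}(\mathbb{B}^{n-\ell}))/\mathcal{K}(\mathcal{A}^2_{\lambda+k+\ell}(\mathbb{B}^{n-\ell})) \cong C(M_{\partial})$ recalled just before the statement (so that $T_c^{\lambda+k+\ell} + K \mapsto c|_{M_{\partial}}$), and $\mathrm{ev}_{\eta} \colon C(M_{\partial}) \to \mathbb{C}$ is evaluation at $\eta$. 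Each of the three maps is a $*$-homomorphism; in particular $q_k$ is well defined precisely because the stated isomorphism guarantees that the coset of $T_c^{\lambda+k+\ell} + K$ depends only on $c|_{M_{\partial}}$. Hence $\pi_{\eta,k}$ is a well-defined $*$-homomorphism into $\mathbb{C}$. Since it is unital (it sends $I = \mathbf{T}^{\lambda}_{f_1}$, corresponding to $c\equiv 1$, to $1$), it is nonzero, and any nonzero one-dimensional $*$-homomorphism is automatically an irreducible representation.

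For the equivalence statement I would first use that two representations acting on $\mathbb{C}$ are unitarily equivalent if and only if they coincide as maps: a unitary on $\mathbb{C}$ is a scalar of modulus one, which commutes with every scalar operator, so conjugation is trivial. It therefore suffices to decide when $\pi_{\eta_1,k} = \pi_{\eta_2,k}$. By construction $\pi_{\eta_j,k}(\mathbf{T}^{\lambda}_{f_c}) = c(\eta_j)$ for every $c \in {\rm VO}_{\partial}(\mathbb{B}^{n-\ell})$, so equality of the two representations is equivalent to $c(\eta_1) = c(\eta_2)$ for all $c \in {\rm VO}_{\partial}(\mathbb{B}^{n-\ell}) = C(M(\textup{VO}))$. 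Since the continuous functions on the compact Hausdorff space $M(\textup{VO})$ separate its points, this holds if and only if $\eta_1 = \eta_2$ as elements of $M_{\partial} \subset M(\textup{VO})$, which gives the claim.

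The argument is essentially formal once the preceding structural results are in hand, so there is no genuine analytic obstacle. The only point requiring a little care is the well-definedness and multiplicativity of the middle map $q_k$, which rests entirely on the already-established identification $\mathcal{T}_{\lambda+k+\ell}({\rm VO}_{\partial}(\mathbb{B}^{n-\ell}))/\mathcal{K} \cong C(M_{\partial})$ together with the fact, from \eqref{eq:iota_k=}, that $\iota_k$ maps $\boldsymbol{\mathcal{T}}_{\lambda}(1,{\rm VO}_{\partial}(\mathbb{B}^{n-\ell}))$ into this algebra; everything else follows by composition and point separation.
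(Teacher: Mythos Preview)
Your proof is correct and is precisely the argument the paper intends: the corollary is stated without proof because it follows immediately from composing $\iota_k$ with the quotient isomorphism $\mathcal{T}_{\lambda+k+\ell}({\rm VO}_{\partial})/\mathcal{K}\cong C(M_{\partial})$ and evaluation, exactly as you spell out. Your handling of the equivalence claim via point separation on $M(\textup{VO})$ is also the standard (and intended) route.
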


Furthermore, we have the following versions of Lemma \ref{le:rho} and Corollary \ref{co:rho_z}:

\begin{lem} \label{le:rho_VO}
The mapping $\nu  : \boldsymbol{\mathcal{T}}_{\lambda}(1,{\rm VO}_{\partial}(\mathbb{B}^{n-\ell}))  \rightarrow  {\rm VO}_{\partial}(\mathbb{B}^{n-\ell}) = C(M(\textup{VO}))$ defined by
\begin{equation*}
  \nu \, : \ \mathbf{T}^{\lambda} \asymp \bigoplus_{\rho \in \mathbb{Z}_+^m} I \otimes T^{\lambda+|\rho|+\ell} \ \ \longmapsto \ \ \lim_{|\rho|\to \infty} \mathcal{B}_{\lambda+|\rho|+\ell}(T^{\lambda+|\rho|+\ell})
\end{equation*}
is a continuous $*$-homomorphism of the $C^*$-algebra $\boldsymbol{\mathcal{T}}_{\lambda}(1,{\rm VO}_{\partial}(\mathbb{B}^{n-\ell}))$ onto $C(M(\textup{VO}))$.
\end{lem}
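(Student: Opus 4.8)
The plan is to imitate the proof of Lemma~\ref{le:rho} line by line, substituting the VO-version Theorem~\ref{th:rep_VO} for Theorem~\ref{th:rep}. Since ${\rm VO}_\partial(\mathbb{B}^{n-\ell}) \subset {\rm BUC}(\mathbb{B}^{n-\ell})$, the algebra $\boldsymbol{\mathcal{T}}_\lambda(1,{\rm VO}_\partial(\mathbb{B}^{n-\ell}))$ is a $C^*$-subalgebra of $\boldsymbol{\mathcal{T}}_\lambda(1,{\rm BUC}(\mathbb{B}^{n-\ell}))$, so the machinery is already in place; the only formal change is that the remainder operators are now compact rather than merely in the semi-commutator ideal, which is immaterial for the argument.

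First I would take $\mathbf{T}^\lambda \asymp \bigoplus_{\rho} I \otimes T^{\lambda+|\rho|+\ell}$ in $\boldsymbol{\mathcal{T}}_\lambda(1,{\rm VO}_\partial(\mathbb{B}^{n-\ell}))$ and apply Theorem~\ref{th:rep_VO} to write $T^{\lambda+|\rho|+\ell} = T_c^{\lambda+|\rho|+\ell} + K_{|\rho|}$ with a single $c \in {\rm VO}_\partial(\mathbb{B}^{n-\ell})$, each $K_{|\rho|}$ compact, and $\|K_{|\rho|}\| \to 0$. Because the Berezin transform satisfies $|\mathcal{B}_\mu[A](z)| \le \|A\|$ for every bounded $A$ (the normalized kernels $k_z^\mu$ being unit vectors), the bound $|\mathcal{B}_{\lambda+|\rho|+\ell}[K_{|\rho|}]| \le \|K_{|\rho|}\|$ gives $\mathcal{B}_{\lambda+|\rho|+\ell}[K_{|\rho|}] \to 0$. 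Since $c$ is in particular a BUC function, \cite[Proposition~4.4]{BC1} applies and yields $\mathcal{B}_{\lambda+|\rho|+\ell}[T_c^{\lambda+|\rho|+\ell}] = \mathcal{B}_{\lambda+|\rho|+\ell}[c] \to c$ uniformly on $\mathbb{B}^{n-\ell}$. Adding the two limits shows $\nu(\mathbf{T}^\lambda) = \lim_{|\rho|\to\infty}\mathcal{B}_{\lambda+|\rho|+\ell}[T^{\lambda+|\rho|+\ell}] = c$, which lies in ${\rm VO}_\partial(\mathbb{B}^{n-\ell}) = C(M(\textup{VO}))$. Thus $\nu$ is well-defined and, crucially, takes values in the smaller space $C(M(\textup{VO}))$ rather than merely in $C(M(\textup{BUC}))$, precisely because the symbol extracted by Theorem~\ref{th:rep_VO} is of vanishing oscillation.

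Surjectivity is then immediate from $\nu(\mathbf{T}_{f_c}^\lambda) = c$, which already exhausts all of ${\rm VO}_\partial(\mathbb{B}^{n-\ell})$, and continuity follows from the norm chain $\|c\|_\infty = \lim_{|\rho|\to\infty}\|T_c^{\lambda+|\rho|+\ell}\| \le \sup_\rho \|T_c^{\lambda+|\rho|+\ell} + K_{|\rho|}\| = \|\mathbf{T}^\lambda\|$, exactly as in Lemma~\ref{le:rho}. I do not expect a genuine obstacle; the one step I would write out with care is the $*$-homomorphism property. The involution is respected since the adjoint $(\mathbf{T}^\lambda)^\ast$ carries the symbol $\overline{c}$, which equals $\overline{\nu(\mathbf{T}^\lambda)}$. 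For multiplicativity, given operators $\mathbf{T}^\lambda, \mathbf{S}^\lambda$ with symbols $c$ and $d$, expanding the product of the representatives $(T_c^\mu + K_{|\rho|})(T_d^\mu + L_{|\rho|})$, with $\mu = \lambda+|\rho|+\ell$, leaves only $T_c^\mu T_d^\mu$ modulo a term of vanishing norm, and the norm-vanishing of the semi-commutator $T_c^\mu T_d^\mu - T_{cd}^\mu$ as $\mu \to \infty$ (by \cite[Theorem~3.8]{BaHaVa}) together with $\mathcal{B}_\mu[T_{cd}^\mu] \to cd$ forces $\nu(\mathbf{T}^\lambda \mathbf{S}^\lambda) = cd = \nu(\mathbf{T}^\lambda)\nu(\mathbf{S}^\lambda)$. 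Density of the subalgebra of finite sums of finite products of generators then upgrades multiplicativity to all of $\boldsymbol{\mathcal{T}}_\lambda(1,{\rm VO}_\partial(\mathbb{B}^{n-\ell}))$.
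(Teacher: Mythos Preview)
Your proposal is correct and matches the paper's approach exactly: the paper does not give a separate proof of Lemma~\ref{le:rho_VO} but simply presents it as the ${\rm VO}_\partial$-version of Lemma~\ref{le:rho}, so imitating that proof with Theorem~\ref{th:rep_VO} in place of Theorem~\ref{th:rep} is precisely what is intended. Your explicit verification of multiplicativity (via the vanishing semi-commutator and uniqueness of the representation) is more detailed than the paper's ``obviously a $*$-homomorphism'' in Lemma~\ref{le:rho}, and the final density remark is superfluous since your argument already covers arbitrary elements, but neither point is an error.
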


\begin{cor} \label{co:rho_z_VO}
For each $\eta \in M(\textup{VO})$ the map $\nu_{\eta} \, : \ \boldsymbol{\mathcal{T}}_{\lambda}(1,{\rm VO}_{\partial}(\mathbb{B}^{n-\ell})) \longrightarrow \mathbb{C}$ defined by
\begin{equation*} 
  \nu_{\eta} \, : \ \mathbf{T}^{\lambda} \ \ \longmapsto \ \ \nu(\mathbf{T}^{\lambda}) = c \ \ \longmapsto \ \ c(\eta) \in \mathbb{C},
\end{equation*}
is a one-dimensional representation of the $C^*$-algebra $\boldsymbol{\mathcal{T}}_{\lambda}(1,{\rm VO}_{\partial}(\mathbb{B}^{n-\ell}))$.
\end{cor}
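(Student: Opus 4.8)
The plan is to exhibit $\nu_{\eta}$ as a composition of two $*$-homomorphisms, so that the corollary becomes a purely formal consequence of Lemma \ref{le:rho_VO} together with Gelfand theory, in exact parallel to how Corollary \ref{co:rho_z} follows from Lemma \ref{le:rho}.

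First I would invoke Lemma \ref{le:rho_VO}, which guarantees that
\[
\nu \, : \ \boldsymbol{\mathcal{T}}_{\lambda}(1,{\rm VO}_{\partial}(\mathbb{B}^{n-\ell})) \ \longrightarrow \ {\rm VO}_{\partial}(\mathbb{B}^{n-\ell}) = C(M(\textup{VO}))
\]
is a continuous (surjective) $*$-homomorphism. Since ${\rm VO}_{\partial}(\mathbb{B}^{n-\ell})$ is a commutative unital $C^*$-algebra whose maximal ideal space is $M(\textup{VO})$, the Gelfand isomorphism identifies it with $C(M(\textup{VO}))$, and its characters are precisely the point evaluations $\delta_{\eta} : f \mapsto f(\eta)$ for $\eta \in M(\textup{VO})$. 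Each such $\delta_{\eta}$ is a (nonzero) one-dimensional $*$-homomorphism $C(M(\textup{VO})) \to \mathbb{C}$.

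The key step is then simply to observe that $\nu_{\eta} = \delta_{\eta} \circ \nu$: for $\mathbf{T}^{\lambda}$ with $\nu(\mathbf{T}^{\lambda}) = c$ we have $\nu_{\eta}(\mathbf{T}^{\lambda}) = c(\eta) = \delta_{\eta}(c) = (\delta_{\eta} \circ \nu)(\mathbf{T}^{\lambda})$. As a composition of $*$-homomorphisms, $\nu_{\eta}$ is again a $*$-homomorphism into $\mathbb{C}$; in particular it is multiplicative and $*$-preserving because both $\nu$ and $\delta_{\eta}$ are. To confirm that $\nu_{\eta}$ is a genuine (nonzero) one-dimensional representation rather than the zero map, I would evaluate on the identity operator $I = \mathbf{T}^{\lambda}_{f_{1}}$ (constant symbol $c \equiv 1$): since $\nu(I) = 1$, we obtain $\nu_{\eta}(I) = 1$.

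There is essentially no obstacle here. All the analytic content—the convergence of the Berezin transforms $\mathcal{B}_{\lambda+|\rho|+\ell}(T^{\lambda+|\rho|+\ell})$, the fact that their limit lies in ${\rm VO}_{\partial}(\mathbb{B}^{n-\ell})$, and the multiplicativity of $\nu$—has already been established in Theorem \ref{th:rep_VO} and Lemma \ref{le:rho_VO}. What remains is only the formal observation that a character of the target commutative algebra, pulled back through $\nu$, yields a one-dimensional representation of the source algebra.
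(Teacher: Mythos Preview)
Your proposal is correct and matches the paper's approach exactly: the paper states this corollary without proof, treating it as an immediate formal consequence of Lemma~\ref{le:rho_VO} (in precise parallel to how Corollary~\ref{co:rho_z} follows from Lemma~\ref{le:rho}), and your argument---composing the $*$-homomorphism $\nu$ with the point-evaluation character $\delta_{\eta}$---is precisely the intended justification.
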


We want to show now that these are all (up to the unitary equivalence) irreducible representations of $\boldsymbol{\mathcal{T}}_{\lambda}(1,{\rm VO}_{\partial}(\mathbb{B}^{n-\ell}))$.
\vspace{1ex}\par 
Denote by $\boldsymbol{\mathcal{K}}(\mathcal{A}_{\lambda}^2(\mathbb{B}^n))$ the set of all compact operators in $\mathcal{L}(\mathcal{A}_{\lambda}^2(\mathbb{B}^n))$. Moreover, put 
\[\boldsymbol{\mathcal{K}}_{\lambda}({\rm VO}_{\partial}(\mathbb{B}^{n-\ell})) := \{\mathbf{K}^{\lambda} \in \boldsymbol{\mathcal{T}}_{\lambda}(1,{\rm VO}_{\partial}(\mathbb{B}^{n-\ell})) : \nu(\mathbf{K}^{\lambda}) = 0\}.\]
In the representation \eqref{eq:unique:VO} these are exactly the operators of the form
\begin{equation} \label{eq:compacts}
\mathbf{K}^{\lambda} \asymp \bigoplus_{\rho \in \mathbb{Z}_+^m} I \otimes K_{|\rho|}
\end{equation}
with $K_{|\rho|}$ compact and $\|K_{|\rho|}\| \to 0$ as $|\rho| \to \infty$ (cf.~Remark \ref{rem:rec}). 

\begin{lem} \label{le:comp_VO}
We have $\boldsymbol{\mathcal{T}}_{\lambda}(1,{\rm VO}_{\partial}(\mathbb{B}^{n-\ell})) \cap 
\boldsymbol{\mathcal{K}}(\mathcal{A}_{\lambda}^2(\mathbb{B}^n)) = \boldsymbol{\mathcal{K}}_{\lambda}({\rm VO}_{\partial}(\mathbb{B}^{n-\ell}))$.
\end{lem}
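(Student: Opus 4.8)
The plan is to prove the two inclusions separately. In both directions I would work through the block-diagonal picture $U\mathbf{T}^{\lambda}U^{*}=\bigoplus_{\rho\in\mathbb{Z}_+^m} I\otimes T^{\lambda+|\rho|+\ell}$ on $\mathcal{H}=\bigoplus_{\rho}\mathscr{H}_{\rho}\otimes\mathcal{A}^2_{\lambda+|\rho|+\ell}(\mathbb{B}^{n-\ell})$, and rely on two elementary facts. First, since each $\mathscr{H}_{\rho}$ is finite-dimensional, $I\otimes K$ is compact on $\mathscr{H}_{\rho}\otimes\mathcal{A}^2_{\lambda+|\rho|+\ell}(\mathbb{B}^{n-\ell})$ whenever $K$ is compact, and $\|I\otimes K\|=\|K\|$. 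Second, a block-diagonal operator $\bigoplus_{\rho} S_{\rho}$ on $\bigoplus_{\rho}\mathcal{H}_{\rho}$ is compact if and only if $\|S_{\rho}\|\to 0$ as $|\rho|\to\infty$. I would justify the ``if'' part by approximating $\bigoplus_{\rho}S_{\rho}$ in operator norm by its truncations $\bigoplus_{|\rho|\le N}S_{\rho}$; each truncation is a finite direct sum of compacts (there are only finitely many $\rho\in\mathbb{Z}_+^m$ with $|\rho|\le N$), hence compact, and the truncation error is $\sup_{|\rho|>N}\|S_{\rho}\|\to 0$.

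For the inclusion $\boldsymbol{\mathcal{K}}_{\lambda}({\rm VO}_{\partial}(\mathbb{B}^{n-\ell}))\subseteq\boldsymbol{\mathcal{T}}_{\lambda}(1,{\rm VO}_{\partial}(\mathbb{B}^{n-\ell}))\cap\boldsymbol{\mathcal{K}}(\mathcal{A}_{\lambda}^2(\mathbb{B}^n))$ I would take $\mathbf{K}^{\lambda}$ in the left-hand set. By definition it already lies in $\boldsymbol{\mathcal{T}}_{\lambda}(1,{\rm VO}_{\partial}(\mathbb{B}^{n-\ell}))$ and, by \eqref{eq:compacts}, has the form $\mathbf{K}^{\lambda}\asymp\bigoplus_{\rho} I\otimes K_{|\rho|}$ with each $K_{|\rho|}$ compact and $\|K_{|\rho|}\|\to 0$. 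Then each block $I\otimes K_{|\rho|}$ is compact and $\|I\otimes K_{|\rho|}\|=\|K_{|\rho|}\|\to 0$, so the block-diagonal criterion shows that $\mathbf{K}^{\lambda}$ is compact on $\mathcal{A}_{\lambda}^2(\mathbb{B}^n)$.

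For the reverse inclusion I would take $\mathbf{T}^{\lambda}\in\boldsymbol{\mathcal{T}}_{\lambda}(1,{\rm VO}_{\partial}(\mathbb{B}^{n-\ell}))$ that is also compact and write it in the unique form of Theorem \ref{th:rep_VO}, namely \eqref{eq:unique:VO}, with $c\in{\rm VO}_{\partial}(\mathbb{B}^{n-\ell})$ and $\|K_{|\rho|}\|\to 0$. Compactness of $\mathbf{T}^{\lambda}$ forces $\|T_c^{\lambda+|\rho|+\ell}+K_{|\rho|}\|\to 0$ as $|\rho|\to\infty$ by the criterion, and since $\|K_{|\rho|}\|\to 0$ this gives $\|T_c^{\lambda+|\rho|+\ell}\|\to 0$. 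On the other hand, the norm identity recorded in the proof of Lemma \ref{le:rho} (which applies verbatim on this subalgebra) states $\lim_{|\rho|\to\infty}\|T_c^{\lambda+|\rho|+\ell}\|=\|c\|_{\infty}$. Comparing the two, $\|c\|_{\infty}=0$, so $c=0$ and hence $\nu(\mathbf{T}^{\lambda})=0$ by Lemma \ref{le:rho_VO}; that is, $\mathbf{T}^{\lambda}\in\boldsymbol{\mathcal{K}}_{\lambda}({\rm VO}_{\partial}(\mathbb{B}^{n-\ell}))$.

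The only step that is not purely formal is the ``only if'' half of the block-diagonal compactness criterion, which I expect to be the main (though mild) obstacle. I would argue it by contradiction: if $\|S_{\rho}\|\not\to 0$, there are $\varepsilon>0$ and infinitely many distinct levels $\rho^{(j)}$ with $|\rho^{(j)}|\to\infty$ and unit vectors $v_j\in\mathcal{H}_{\rho^{(j)}}$ satisfying $\|S_{\rho^{(j)}}v_j\|>\varepsilon$; because the blocks are mutually orthogonal the $v_j$ are orthonormal, hence weakly null, so a compact $\bigoplus_{\rho}S_{\rho}$ would send them to a norm-null sequence, contradicting $\|(\bigoplus_{\rho}S_{\rho})v_j\|=\|S_{\rho^{(j)}}v_j\|>\varepsilon$. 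Once this is in hand, both inclusions follow immediately from the structural results of Theorem \ref{th:rep_VO} and Lemma \ref{le:rho_VO}.
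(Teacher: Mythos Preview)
Your proof is correct and follows essentially the same route as the paper: both directions rely on the block-diagonal picture, with the forward inclusion handled by truncation (using that each $\mathscr{H}_{\rho}$ is finite-dimensional) and the reverse inclusion by showing $\nu(\mathbf{T}^{\lambda})=0$. The only minor difference is that for the reverse inclusion the paper bounds the Berezin transform directly, writing $|\nu(\mathbf{T}^{\lambda})|=\lim_{|\rho|\to\infty}|\mathcal{B}_{\lambda+|\rho|+\ell}(T^{\lambda+|\rho|+\ell})|\le\lim_{|\rho|\to\infty}\|T^{\lambda+|\rho|+\ell}\|=0$, whereas you first invoke the decomposition of Theorem~\ref{th:rep_VO} and the norm identity $\lim_{|\rho|\to\infty}\|T_c^{\lambda+|\rho|+\ell}\|=\|c\|_{\infty}$; both lead to the same conclusion and your extra justification of the block-diagonal compactness criterion is a welcome addition.
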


\begin{proof}
Let $\mathbf{K}^{\lambda} \asymp \bigoplus_{\rho \in \mathbb{Z}_+^m} I \otimes K_{|\rho|} \in \boldsymbol{\mathcal{K}}_{\lambda}({\rm VO}_{\partial}(\mathbb{B}^{n-\ell}))$. \\
Since $K^{\rho} \in \mathcal{K}(\mathcal{A}^2_{\lambda+|\rho|+\ell}(\mathbb{B}^{n-\ell}))$ and the spaces $\mathscr{H}_{\rho}$ in (\ref{defn_mathcal_H}) are finite-dimensional, 
the operators $\bigoplus\limits_{|\rho| \leq k} I \otimes K_{|\rho|}$ are compact for all $k \in \N$. Moreover,
\[\|\mathbf{K}^{\lambda} -  \bigoplus\limits_{|\rho| \leq k} I \otimes K_{|\rho|}\| \leq \sup\limits_{|\rho| > k} \|I \otimes K_{|\rho|}\| = \sup\limits_{|\rho| > k} \|K_{|\rho|}\| \to 0\]
as $k \to \infty$. This implies that $\mathbf{K}^{\lambda}$ is indeed compact.
\vspace{1ex}\par 
Now let $\mathbf{T}^{\lambda} \asymp \bigoplus\limits_{\rho \in \mathbb{Z}_+^m} I \otimes T^{\lambda+|\rho|+\ell} \in \boldsymbol{\mathcal{T}}_{\lambda}(1,{\rm VO}_{\partial}(\mathbb{B}^{n-\ell}))$ be compact. 
Then  $\|T^{\lambda+|\rho|+\ell}\| \to 0$ as $|\rho| \to \infty$ and therefore 
\[|\nu(\mathbf{T}^{\lambda})| = \lim\limits_{|\rho|\to \infty} |\mathcal{B}_{\lambda+|\rho|+\ell}(T^{\lambda+|\rho|+\ell})| \leq \lim_{|\rho|\to \infty} \|T^{\lambda+|\rho|+\ell}\| = 0,\]
and hence $\mathbf{T}^{\lambda} \in \boldsymbol{\mathcal{K}}_{\lambda}({\rm VO}_{\partial}(\mathbb{B}^{n-\ell}))$.
\end{proof}\noindent 
This implies that the Calkin algebra $\boldsymbol{\mathcal{T}}_{\lambda}(1,{\rm VO}_{\partial}(\mathbb{B}^{n-\ell})) / (\boldsymbol{\mathcal{T}}_{\lambda}(1,{\rm VO}_{\partial}(\mathbb{B}^{n-\ell})) \cap \boldsymbol{\mathcal{K}}(\mathcal{A}_{\lambda}^2(\mathbb{B}^n)))$ is isomorphic to ${\rm VO}_{\partial}(\mathbb{B}^{n-\ell})$ via the induced mapping
\[\hat{\nu} \, : \ \mathbf{T}^{\lambda} + \boldsymbol{\mathcal{T}}_{\lambda}(1,{\rm VO}_{\partial}(\mathbb{B}^{n-\ell})) \cap \boldsymbol{\mathcal{K}}(\mathcal{A}_{\lambda}^2(\mathbb{B}^n))
 \ \ \longmapsto \ \ \lim_{|\rho|\to \infty} \mathcal{B}_{\lambda+|\rho|+\ell}(T^{\lambda+|\rho|+\ell}).\]

\begin{thm} \label{th:rep_list}
Up to unitary equivalence the following list of irreducible representations of the $C^*$-algebra $\boldsymbol{\mathcal{T}}_{\lambda}(1,{\rm VO}_{\partial}(\mathbb{B}^{n-\ell}))$ is complete: \begin{itemize}
	\item[\textup{(1)}] $\iota_k : \mathbf{T}^{\lambda} \asymp \bigoplus_{\rho \in \mathbb{Z}_+^m} I \otimes (T_c^{\lambda+|\rho|+\ell} + K_{|\rho|}) \mapsto 
	T_c^{\lambda+k+\ell} + K_k$ for $k \in \mathbb{Z}_+$,
	\item[\textup{(2)}] $\nu_{\eta} \, : \ \mathbf{T}^{\lambda} \ \ \longmapsto \ \ \nu(\mathbf{T}^{\lambda}) = c \ \ \longmapsto \ \ c(\eta) \in \mathbb{C}$ for 
	$\eta \in M(\textup{VO})$.   
	\end{itemize}
Moreover, the above representations are pairwise not unitarily equivalent.
\end{thm}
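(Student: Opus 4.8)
The plan is to read off the irreducible representations from the short exact sequence
\[
0 \longrightarrow \boldsymbol{\mathcal{K}}_{\lambda}({\rm VO}_{\partial}(\mathbb{B}^{n-\ell})) \longrightarrow \boldsymbol{\mathcal{T}}_{\lambda}(1,{\rm VO}_{\partial}(\mathbb{B}^{n-\ell})) \stackrel{\nu}{\longrightarrow} C(M(\textup{VO})) \longrightarrow 0,
\]
whose ideal $\boldsymbol{\mathcal{K}}_{\lambda}({\rm VO}_{\partial}(\mathbb{B}^{n-\ell})) = \ker \nu$ equals $\boldsymbol{\mathcal{T}}_{\lambda}(1,{\rm VO}_{\partial}(\mathbb{B}^{n-\ell})) \cap \boldsymbol{\mathcal{K}}(\mathcal{A}_{\lambda}^2(\mathbb{B}^n))$ by Lemma \ref{le:comp_VO}, and whose quotient is identified with $C(M(\textup{VO}))$ through the induced isomorphism $\hat{\nu}$ (Lemma \ref{le:rho_VO} and the discussion following Lemma \ref{le:comp_VO}). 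I would then invoke the standard dichotomy: an irreducible representation $\pi$ either annihilates the ideal $J:=\boldsymbol{\mathcal{K}}_{\lambda}({\rm VO}_{\partial}(\mathbb{B}^{n-\ell}))$ or it does not, and these two families are in bijection (up to unitary equivalence) with $\widehat{C(M(\textup{VO}))}$ and with $\widehat{J}$, respectively, the latter via the unique extension of each irreducible representation of an ideal to the whole algebra.

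The case $\pi|_J = 0$ is immediate: $\pi$ factors through $\boldsymbol{\mathcal{T}}_{\lambda}(1,{\rm VO}_{\partial}(\mathbb{B}^{n-\ell}))/\ker\nu \cong C(M(\textup{VO}))$, and since the irreducible representations of a commutative $C^*$-algebra are its characters, $\pi$ is equivalent to a point evaluation $\nu_{\eta}$, $\eta \in M(\textup{VO})$, as in Corollary \ref{co:rho_z_VO}. This accounts exactly for the representations in item (2).

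For $\pi|_J \neq 0$ the key point is that $J$ consists entirely of compact operators on $\mathcal{A}_{\lambda}^2(\mathbb{B}^n)$. By the classical structure theory of $C^*$-algebras of compact operators, $J$ is isomorphic to a $c_0$-direct sum $\bigoplus_{i \in I} \mathcal{K}(H_i)$ of elementary algebras, it is CCR, and every irreducible representation of $J$ is, up to equivalence, one of the pairwise inequivalent coordinate projections onto a single summand. Hence it suffices to show that the family $\{\iota_k|_J\}_{k \in \mathbb{Z}_+}$ already exhausts $\widehat{J}$. Here I would check three facts. First, each $\iota_k|_J$ is nonzero -- indeed the semi-commutators $\mathbf{T}^{\lambda}_{f_c}\mathbf{T}^{\lambda}_{f_d}-\mathbf{T}^{\lambda}_{f_{cd}}$ lie in $J$ and map under $\iota_k$ onto the full ideal of compact operators $\mathcal{K}(\mathcal{A}^2_{\lambda+k+\ell}(\mathbb{B}^{n-\ell}))$ -- so $\iota_k|_J$ is irreducible. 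Second, the assignment $\Phi = \bigoplus_{k}\iota_k|_J$, sending $\mathbf{K}^{\lambda}$ to the sequence of its components in the decomposition (\ref{eq:compacts}), is faithful, since $\|\mathbf{K}^{\lambda}\| = \sup_{k}\|\iota_k(\mathbf{K}^{\lambda})\|$ by Lemma \ref{le:sup-norm}. Third, the $\iota_k|_J$ are pairwise inequivalent: equivalent restrictions would extend uniquely to equivalent representations $\iota_{k_1}\cong\iota_{k_2}$ of the full algebra, contradicting Proposition \ref{iota_rho_VO}. Injectivity of $k \mapsto [\iota_k|_J]$ together with faithfulness of $\Phi$ then forces this map to meet every summand of $J$ (a summand annihilated by all $\iota_k$ would lie in $\ker\Phi = 0$), whence $\widehat{J} = \{[\iota_k|_J] : k \in \mathbb{Z}_+\}$; lifting back gives precisely the representations $\iota_k$ of item (1).

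Finally the pairwise inequivalence of the full list is routine: the $\iota_k$ are infinite-dimensional and the $\nu_{\eta}$ one-dimensional, so no $\iota_k$ is equivalent to any $\nu_{\eta}$; $\iota_{k_1}\not\cong\iota_{k_2}$ for $k_1\neq k_2$ by Proposition \ref{iota_rho_VO}; and $\nu_{\eta_1}\not\cong\nu_{\eta_2}$ for $\eta_1\neq\eta_2$ because distinct points of $M(\textup{VO})$ yield distinct characters of $C(M(\textup{VO}))$. I expect the main obstacle to sit in the case $\pi|_J\neq 0$, precisely in resisting the plausible-looking but (it seems) false identification of $J$ with the entire $c_0$-direct sum $\bigoplus_{k}\mathcal{K}(\mathcal{A}^2_{\lambda+k+\ell}(\mathbb{B}^{n-\ell}))$: the asymptotics $\|T^{\lambda+k+\ell}_{1-|z|^2}\| \to 1$ as $k\to\infty$ obstruct separating a single level, so one cannot write down $J$ explicitly and must instead deduce completeness of $\widehat{J}$ abstractly, from faithfulness of $\Phi$, the unique-extension property, and the structure theorem for compact-operator algebras.
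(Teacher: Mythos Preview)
Your proof is correct and follows the same strategy as the paper: invoke the Dixmier dichotomy (\cite[Proposition~2.11.2]{Di}) for the ideal $J=\ker\nu=\boldsymbol{\mathcal{K}}_{\lambda}({\rm VO}_{\partial}(\mathbb{B}^{n-\ell}))$, identify the quotient representations with the characters $\nu_\eta$ of $C(M(\textup{VO}))$, and show that the irreducible representations of $J$ are exactly the $\iota_k|_J$. Your handling of the ideal case via the abstract structure theorem for $C^*$-subalgebras of the compacts, together with faithfulness of $\Phi=\bigoplus_k\iota_k|_J$, is somewhat more careful than the paper's terse level-by-level argument, and your caution about not identifying $J$ concretely with the full $c_0$-sum $\bigoplus_k\mathcal{K}(\mathcal{A}^2_{\lambda+k+\ell}(\mathbb{B}^{n-\ell}))$ is well placed---though note that this isomorphism does in fact follow \emph{a posteriori} from your own argument, so the identification is not false, merely not needed as an input.
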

\begin{proof}
By Proposition \ref{iota_rho_VO} and Corollary \ref{co:rho_z_VO} it remains to show that the list is complete. \cite[Proposition 2.11.2]{Di} implies that every irreducible representation of 
$\boldsymbol{\mathcal{T}}_{\lambda}(1,{\rm VO}_{\partial}(\mathbb{B}^{n-\ell}))$ is either induced by a representation of 
$$\boldsymbol{\mathcal{T}}_{\lambda}(1,{\rm VO}_{\partial}(\mathbb{B}^{n-\ell})) / (\boldsymbol{\mathcal{T}}_{\lambda}(1,{\rm VO}_{\partial}(\mathbb{B}^{n-\ell})) \cap \boldsymbol{\mathcal{K}}(\mathcal{A}_{\lambda}^2(\mathbb{B}^n))) \cong {\rm VO}_{\partial}(\mathbb{B}^{n-\ell})$$ 
or is the extension of an irreducible representation of 
$\boldsymbol{\mathcal{T}}_{\lambda}(1,{\rm VO}_{\partial}(\mathbb{B}^{n-\ell})) \cap \boldsymbol{\mathcal{K}}(\mathcal{A}_{\lambda}^2(\mathbb{B}^n))$. The former representations are exactly the representations $\nu_{\eta}$. So consider an irreducible representation of 
$\boldsymbol{\mathcal{T}}_{\lambda}(1,{\rm VO}_{\partial}(\mathbb{B}^{n-\ell})) \cap \boldsymbol{\mathcal{K}}(\mathcal{A}_{\lambda}
 ^2(\mathbb{B}^n)) = \boldsymbol{\mathcal{K}}_{\lambda}({\rm VO}_{\partial}(\mathbb{B}^{n-\ell}))$. Restricted to the different levels $\rho$, this representation is either $0$ or an irreducible representation of $\mathcal{K}(\mathcal{A}^2_{\lambda+|\rho|+\ell}(\mathbb{B}^{n-\ell}))$ (cf.~Proposition \ref{iota_rho_VO}). Since the only irreducible representation of $\mathcal{K}(\mathcal{A}^2_{\lambda+|\rho|+\ell}(\mathbb{B}^{n-\ell}))$ is the identical representation and $\|K_{|\rho|}\| \to 0$ in \eqref{eq:compacts}, we can deduce that $\iota_k$, with $k =|\rho|$, are the only additional representations coming from $\boldsymbol{\mathcal{T}}_{\lambda}(1,{\rm VO}_{\partial}(\mathbb{B}^{n-\ell})) \cap \boldsymbol{\mathcal{K}}(\mathcal{A}_{\lambda}^2(\mathbb{B}^n))$.
\end{proof}

As a corollary of Lemma \ref{le:rho_VO} and Lemma \ref{le:comp_VO}, we obtain a result on the Fredholmness of an operator $\mathbf{T}^{\lambda} \in \boldsymbol{\mathcal{T}}_{\lambda}(1,{\rm VO}_{\partial}(\mathbb{B}^{n-\ell}))$.

\begin{cor} \label{co:Fred}
An operator $$\mathbf{T}^{\lambda} \asymp \bigoplus_{\rho \in \mathbb{Z}_+^m} I \otimes (T_c^{\lambda+|\rho|+\ell} + K_{|\rho|}) \in \boldsymbol{\mathcal{T}}_{\lambda}(1,{\rm VO}_{\partial}(\mathbb{B}^{n-\ell}))$$ 
is Fredholm if and only if $c(\eta) \neq 0$ for all $\eta \in M(\textup{VO})$. In particular, $\esssp \mathbf{T}^{\lambda} = c(M(\textup{VO}))$.
\end{cor}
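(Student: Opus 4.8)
The plan is to reduce the Fredholm question to the invertibility of the symbol $c$ in the commutative $C^*$-algebra $C(M(\textup{VO}))$, using the identification of the Calkin-type quotient established immediately before the statement. First I would recall the abstract criterion: $\mathbf{T}^{\lambda}$ is Fredholm if and only if its coset $\mathbf{T}^{\lambda} + \boldsymbol{\mathcal{K}}(\mathcal{A}_{\lambda}^2(\mathbb{B}^n))$ is invertible in the full Calkin algebra $\mathcal{L}(\mathcal{A}_{\lambda}^2(\mathbb{B}^n))/\boldsymbol{\mathcal{K}}(\mathcal{A}_{\lambda}^2(\mathbb{B}^n))$. Then, by Lemma \ref{le:comp_VO} and the quotient identification via $\hat{\nu}$ stated just above, the image of the unital $C^*$-algebra $\boldsymbol{\mathcal{T}}_{\lambda}(1,{\rm VO}_{\partial}(\mathbb{B}^{n-\ell}))$ under the Calkin projection is the $C^*$-subalgebra
$$
\boldsymbol{\mathcal{T}}_{\lambda}(1,{\rm VO}_{\partial}(\mathbb{B}^{n-\ell}))\big/\big(\boldsymbol{\mathcal{T}}_{\lambda}(1,{\rm VO}_{\partial}(\mathbb{B}^{n-\ell})) \cap \boldsymbol{\mathcal{K}}(\mathcal{A}_{\lambda}^2(\mathbb{B}^n))\big) \cong C(M(\textup{VO})),
$$
the isomorphism $\hat{\nu}$ sending $\mathbf{T}^{\lambda} + \boldsymbol{\mathcal{K}}$ to $c = \nu(\mathbf{T}^{\lambda})$.

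The central step is to pass from invertibility in the ambient Calkin algebra to invertibility in this commutative subalgebra. Here I would invoke the spectral permanence (inverse-closedness) of $C^*$-subalgebras: an element of a unital $C^*$-subalgebra is invertible in the ambient $C^*$-algebra if and only if it is invertible in the subalgebra. Applying this to the coset $\mathbf{T}^{\lambda} + \boldsymbol{\mathcal{K}}$, which lies in the copy of $C(M(\textup{VO}))$, Fredholmness of $\mathbf{T}^{\lambda}$ becomes equivalent to invertibility of $c$ in $C(M(\textup{VO}))$. Since invertibility in $C(X)$ over a compact Hausdorff space means nonvanishing, this holds precisely when $c(\eta) \neq 0$ for every $\eta \in M(\textup{VO})$, which is the first assertion.

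For the essential spectrum I would apply this Fredholm criterion to $\mathbf{T}^{\lambda} - zI$ for $z \in \mathbb{C}$. As the algebra is unital and $\nu$ is a unital $*$-homomorphism, one has $\nu(\mathbf{T}^{\lambda} - zI) = c - z$, and in the representation (\ref{eq:unique:VO}) the operator $\mathbf{T}^{\lambda} - zI$ has symbol $c - z$ (using $zI = T_z$ and linearity of the symbol map, with constants lying in ${\rm VO}_{\partial}(\mathbb{B}^{n-\ell})$). Hence $\mathbf{T}^{\lambda} - zI$ fails to be Fredholm exactly when $(c-z)(\eta) = 0$ for some $\eta$, i.e.\ when $z \in c(M(\textup{VO}))$, yielding $\esssp \mathbf{T}^{\lambda} = c(M(\textup{VO}))$. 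The only genuine subtlety is the spectral-permanence step; everything else is a direct reading of Lemmas \ref{le:rho_VO} and \ref{le:comp_VO} together with the quotient identification preceding the corollary.
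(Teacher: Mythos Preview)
Your proposal is correct and follows exactly the route the paper has in mind: the corollary is stated without proof, as an immediate consequence of Lemmas \ref{le:rho_VO} and \ref{le:comp_VO} together with the isomorphism $\hat{\nu}$ identifying the quotient by compacts with $C(M(\textup{VO}))$. You have simply made explicit the one point the paper leaves tacit, namely spectral permanence for unital $C^*$-subalgebras of the Calkin algebra, and then read off the essential spectrum by applying the criterion to $\mathbf{T}^{\lambda}-zI$.
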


Without presenting much details we briefly discuss the Fredholm index in the more general situation of matrix-valued symbols. As is shown in \cite[Section 2]{XZ}, the index formula for Fredholm operators with $\mathrm{VO}_{\partial}$-symbols can be reduced to the case of operator symbols that are continuous up to the boundary. More precisely, given a matrix-valued function 
$$c \in \mathrm{Mat}_p(\mathrm{VO}_{\partial}(\mathbb{B}^{n-\ell})):= \mathrm{VO}_{\partial}(\mathbb{B}^{n-\ell}) \otimes \mathrm{Mat}_p(\mathbb{C})$$ 
we define
\begin{equation} \label{eq:c_s}
 c_s(r\zeta) = \begin{cases}
                c(r\zeta), & \mathrm{if} \ \ 0 \leq r \leq s, \\
                c(s\zeta), & \mathrm{if} \ \ s < r \leq 1,
               \end{cases}
\end{equation}
for each $s \in (0,1)$, where $r = |z|$ and $\zeta \in S^{2(n - \ell)-1}= \partial \mathbb{B}^{n-\ell}$. Then $c_s(r\zeta) \in \mathrm{Mat}_p(C(\overline{\mathbb{B}}^{n-\ell}))$ for all $s \in (0,1)$.
The matrix-valued version of Corollary \ref{co:Fred} reads as follows:
\begin{cor}\label{Corollary_Fredholm_property_and_index}
Given $c \in \mathrm{Mat}_p(\mathrm{VO}_{\partial}(\mathbb{B}^{n-\ell}))$, the operator $$\mathbf{T}^{\lambda} \asymp \bigoplus_{\rho \in \mathbb{Z}_+^m} I \otimes (T_c^{\lambda+|\rho|+\ell} 
+ K_{|\rho|}) \in \boldsymbol{\mathcal{T}}_{\lambda}(1,{\rm VO}_{\partial}(\mathbb{B}^{n-\ell})) \otimes \mathrm{Mat}_p(\mathbb{C})$$ 
is Fredholm if and only if the matrix $c(\eta)$ is invertible for all $\eta \in M(\textup{VO})$. In particular, 
\begin{equation*}
 \esssp \mathbf{T}^{\lambda} = \big{\{} \det c(\eta)\, : \ \eta \in M(\textup{VO})\big{\}}.
\end{equation*}
In case of being Fredholm,
\begin{equation*}
 \Ind \mathbf{T}^{\lambda} = \Ind \left[\bigoplus_{\rho \in \mathbb{Z}_+^m} I \otimes (T_c^{\lambda+|\rho|+\ell} + K_{|\rho|})\right] = 0.
\end{equation*}
\end{cor}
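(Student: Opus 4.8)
The plan is to reduce the Fredholm and spectral assertions to the scalar analysis of Section~\ref{se:VO} by tensoring with $\mathrm{Mat}_p(\mathbb{C})$, and then to settle the index via the deformation $c\mapsto c_s$ from \eqref{eq:c_s}. First I would pass to the Calkin picture. Tensoring Lemma~\ref{le:comp_VO} with $\mathrm{Mat}_p(\mathbb{C})$ gives
\[
\big(\boldsymbol{\mathcal{T}}_{\lambda}(1,{\rm VO}_{\partial}(\mathbb{B}^{n-\ell}))\otimes \mathrm{Mat}_p(\mathbb{C})\big)\cap \boldsymbol{\mathcal{K}}(\mathcal{A}_{\lambda}^2(\mathbb{B}^n)\otimes\mathbb{C}^p)=\boldsymbol{\mathcal{K}}_{\lambda}({\rm VO}_{\partial}(\mathbb{B}^{n-\ell}))\otimes \mathrm{Mat}_p(\mathbb{C}),
\]
so the image of $\boldsymbol{\mathcal{T}}_{\lambda}(1,{\rm VO}_{\partial})\otimes\mathrm{Mat}_p(\mathbb{C})$ in the full Calkin algebra is $*$-isomorphic, via $\hat{\nu}\otimes\mathrm{id}$, to $C(M(\textup{VO}))\otimes\mathrm{Mat}_p(\mathbb{C})=C(M(\textup{VO}),\mathrm{Mat}_p(\mathbb{C}))$, sending $\mathbf{T}^{\lambda}$ to its symbol $c$. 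Since invertibility is preserved under passage between a $C^*$-algebra and a $C^*$-subalgebra (spectral permanence), $\mathbf{T}^{\lambda}$ is Fredholm iff its image $c$ is invertible in $C(M(\textup{VO}),\mathrm{Mat}_p(\mathbb{C}))$, i.e.\ iff $c(\eta)$ is invertible for every $\eta\in M(\textup{VO})$. This is the Fredholm criterion.

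For the essential spectrum I would apply the same criterion to $\mathbf{T}^{\lambda}-\mu I$, whose symbol is $c-\mu I$: the point $\mu$ lies in $\esssp\mathbf{T}^{\lambda}$ exactly when $c(\eta)-\mu I$ fails to be invertible for some $\eta$, that is when $\det\!\big(c(\eta)-\mu I\big)=0$ for some $\eta\in M(\textup{VO})$. In particular $0\in\esssp\mathbf{T}^{\lambda}$ precisely when $\det c(\eta)=0$ for some $\eta$, which is the stated determinant condition and recovers the Fredholm criterion.

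The index is the only genuinely new point. Here I would invoke the reduction of \cite[Section~2]{XZ}: for $s$ close to $1$ the operators $T_c^{\mu}$ and $T_{c_s}^{\mu}$ have the same index, where $c_s\in\mathrm{Mat}_p(C(\overline{\mathbb{B}}^{n-\ell}))$ is defined in \eqref{eq:c_s}. The key observation is that Fredholmness forces $c(\eta)$ to be invertible for \emph{all} $\eta\in M(\textup{VO})$, and since $\mathbb{B}^{n-\ell}\subset M(\textup{VO})$ this means $c(z)\in GL_p(\mathbb{C})$ for every interior point $z$. Consequently $c_s(r\zeta)=c(\min(r,s)\zeta)$ takes values in $GL_p(\mathbb{C})$ on the whole closed ball, so $c_s:\overline{\mathbb{B}}^{n-\ell}\to GL_p(\mathbb{C})$ is continuous on a contractible space and hence null-homotopic; its boundary restriction is therefore null-homotopic in $GL_p(\mathbb{C})$, so it carries the trivial class under the index homomorphism and $\Ind T_{c_s}^{\mu}=0$, whence $\Ind T_c^{\mu}=0$ for every admissible weight $\mu=\lambda+|\rho|+\ell$. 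Note that this argument is uniform in the dimension $n-\ell$, and in particular covers the disc case $n-\ell=1$, where a nonzero winding number is excluded precisely because $c_s$ extends invertibly into the interior.

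Finally I would assemble the levels. Each summand $S_{\rho}:=T_c^{\lambda+|\rho|+\ell}+K_{|\rho|}$ is a compact perturbation of $T_c^{\lambda+|\rho|+\ell}$, hence $\Ind S_{\rho}=0$, i.e.\ $\dim\ker S_{\rho}=\dim\mathrm{coker}\,S_{\rho}$; tensoring with the finite-dimensional $\mathscr{H}_{\rho}$ preserves this balance, so $\dim\ker(I\otimes S_{\rho})=\dim\mathrm{coker}(I\otimes S_{\rho})$. Identifying cokernels with orthogonal complements of ranges one has $\ker\mathbf{T}^{\lambda}=\bigoplus_{\rho}\ker(I\otimes S_{\rho})$ and $\mathrm{coker}\,\mathbf{T}^{\lambda}=\bigoplus_{\rho}\mathrm{coker}(I\otimes S_{\rho})$, and since $\mathbf{T}^{\lambda}$ is Fredholm both are finite-dimensional; therefore $\Ind\mathbf{T}^{\lambda}=\sum_{\rho}\big(\dim\ker(I\otimes S_{\rho})-\dim\mathrm{coker}(I\otimes S_{\rho})\big)=0$. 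I expect the main obstacle to be this index step: carefully justifying the homotopy invariance of the index under the $c_s$-reduction of \cite{XZ} and controlling the passage from the level-wise indices to the infinite orthogonal sum; by contrast, the Fredholm and essential-spectrum parts are routine consequences of the $C^*$-algebraic symbol calculus already established for the scalar algebra.
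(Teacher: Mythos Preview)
Your proposal is correct and follows essentially the same route as the paper: the Fredholm criterion and essential spectrum come from tensoring the scalar Calkin picture with $\mathrm{Mat}_p(\mathbb{C})$, and the index is obtained by applying the $c\mapsto c_s$ reduction of \cite{XZ} on each level, using that $c_s$ is an invertible continuous matrix function on the contractible closed ball (hence homotopic to a constant) to conclude $\Ind(T_{c_s}^{\lambda+|\rho|+\ell}+K_{|\rho|})=0$ and then summing over levels. The only cosmetic difference is that the paper phrases the homotopy as ``$c_s$ homotopic to a constant matrix $c_{|\rho|}$'' rather than via null-homotopy of the boundary restriction, and it refers to \cite[p.~730]{BV2} for the finiteness of $\ker$ and $\mathrm{coker}$ that you supply directly.
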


\begin{proof}
 Only the index calculation needs to be justified. If $\mathbf{T}^{\lambda}$ is Fredholm, then the matrix-valued function $c$ is invertible, and
\begin{equation*}
 \Ind \mathbf{T}^{\lambda} = \sum_{\rho \in \mathbb{Z}_+^m} \dim \mathscr{H}_{\rho} \times \Ind (T_c^{\lambda+|\rho|+\ell} + K_{|\rho|}). 
\end{equation*}
For each $\rho \in \mathbb{Z}_+^m$ Theorem 2.6 of  \cite{XZ} ensures that there is $s_{|\rho|,0} \in (0,1)$ such that for every $s \in (s_{|\rho|,0},1)$, each operator 
$T_{c_s}^{\lambda+|\rho|+\ell} + K_{|\rho|}$ is Fredholm and 
\begin{equation*}
 \Ind (T_c^{\lambda+|\rho|+\ell} + K_{|\rho|}) = \Ind (T_{c_s}^{\lambda+|\rho|+\ell} + K_{|\rho|}),
\end{equation*}
where $c_s$ is defined in \eqref{eq:c_s}. The matrix-valued function $c_s$ is continuous on the closed unit ball $\overline{\mathbb{B}}^{n-\ell}$, which is retractable to a point. Hence $c_s$ is homotopic to a constant matrix, 
say $c_{|\rho|}$, in a class of invertible continuous matrix-functions on $\overline{\mathbb{B}}^{n-\ell}$. This implies that the operator $T_{c_s}^{\lambda+|\rho|+\ell} 
+ K_{|\rho|}$ is homotopic to the scalar-matrix multiplication operator $c_{|\rho|}$. Thus, for each $\rho \in \mathbb{Z}_+^m$,
\begin{equation*}
 \Ind (T_c^{\lambda+|\rho|+\ell} + K_{|\rho|}) = \Ind c_{|\rho|}I = 0. \qedhere
\end{equation*}
For a discussion ensuring that $\ker \mathbf{T}^{\lambda}$ and $\text{coker}\, \mathbf{T}^{\lambda}$ are finite dimensional see \cite[p. 730]{BV2}.
\end{proof}
%%%%%%%%%%%%%%%%%%%%%%%%%%%%%%%%%%%%%%%%%%%%%%%%%%%%%%%%%%%%%%%%%%%%%%%%%%%%%%%%%%%%
\section{The algebra $\boldsymbol{\mathcal{T}}_{\lambda}(L^{\infty}_{k\textup{-}qr}, {\rm VO}_{\partial}(\mathbb{B}^{n-\ell}))$}
\setcounter{equation}{0}
\label{Section_6}
%%%%%%%%%%%%%%%%%%%%%%%%%%%%%%%%%%%%%%%%%%%%%%%%%%%%%%%%%%%%%%%%%%%%%%%%%%%%%%%%%%%%
Given $m \leq \ell$, we fix a multi-index $k = (k_1,...,k_m) \in \mathbb{N}^m$ with $|k| = k_1 + ...+ k_m = \ell$, and introduce the algebra $L^{\infty}_{k\textup{-}qr}$ of \emph{$k$-quasi-radial} functions
\begin{equation*}
L^{\infty}_{k\textup{-}qr} = \big{\{} a(z') = a(|z'_{(1)}|, ..., |z'_{(m)}|) \in L^{\infty}(\mathbb{B}^{\ell})\: : \: a \in L^{\infty}(\tau( \mathbb{B}^m)) \big{\}},
\end{equation*}
where $ \tau( \mathbb{B}^m)=\{ r=(r_1, \ldots, r_m) \in \mathbb{R}_+^m \: : \: 0 \leq r <1\}$ is the base of the ball $\mathbb{B}^m$ considered as a \emph{Reinhardt domain}.
Note that in the extreme cases: $m=1$ so that $k = (\ell)$ and $m=\ell$ so that $k = (1,\ldots,1)$, we deal with \emph{radial} and \emph{separately radial} symbols, respectively. In the 
following we write $r_j:= |z'_{(j)}|$ for all $j = 1,...,m$ and $a=a(r_1,...,r_m) \in L^{\infty}(\tau(\mathbb{B}^m))$. 
\vspace{1ex}\par 
According to \cite[Lemma 3.1]{V1}, a Toeplitz operator $T^{\lambda}_a$ with a  $k$-quasi-radial symbol $a=a(r_1, \cdots,r_m)$ is diagonal with respect to $(e^{\lambda}_{\alpha'}(z'))_{\alpha^{\prime} 
\in \mathbb{Z}_+^{\ell}}$. For each multi-index $\alpha = (\alpha_1, \cdots, \alpha_{\ell}) = (\alpha_{(1)}, \cdots, \alpha_{(m)}) \in \mathbb{Z}^{\ell}_+$ we denote by $\rho = \rho(\alpha) = (\rho_1,\cdots,\rho_m) \in \mathbb{Z}_+^m$ the multi-index with the entries $\rho_j = |\alpha_{(j)}|$, for all $j=1,\cdots,m$. In particular, we have $|\alpha| = |\rho|$. The eigenvalue $\gamma_{a,k,\lambda}(\rho)$ of $T^{\lambda}_a$ with respect to $e^{\lambda}_{\alpha'}$ only depends on $\rho=\rho(\alpha)$, i.e.,
\begin{equation*}
T^{\lambda}_ae^{\lambda}_{\alpha'}=\gamma_{a,k,\lambda}(\rho) e^{\lambda}_{\alpha'}, 
\end{equation*}
where
\begin{equation*}
\gamma_{a,k,\lambda}(\rho)=\frac{2^m\Gamma(n+|\rho|+\lambda+1)}{\Gamma(\lambda+1)\prod_{j=1}^m(k_j-1+\rho_j)!}
\int_{\mathcal{\tau}(\mathbb{B}^m)}a(r)(1-|r|^2)^{\lambda}\prod_{j=1}^mr_j^{2\rho_j+2k_j-1}dr.
\end{equation*} 
\par 
In particular, the $C^*$-algebra $\mathcal{T}_{\lambda}(L_{k\textup{-}qr}^{\infty})$ is infinitely generated and unital. It consists of certain operators that are diagonal with respect to 
the orthonormal basis $[e^{\lambda}_{\alpha'}\: : \: \alpha' \in \mathbb{Z}_+^{\ell}]$. By identifying elements in  $\mathcal{T}_{\lambda}(L_{k\textup{-}qr}^{\infty})$ with its eigenvalue 
sequence, we can interpret $\mathcal{T}_{\lambda}(L_{k\textup{-}qr}^{\infty})$ as a sub-algebra of $l_{\infty}(\mathbb{Z}_+^m)$. In fact, the algebra 
$\mathcal{T}_{\lambda}(L_{k\textup{-}qr}^{\infty})$ can even be embedded into a smaller algebra of, in a certain sense, slowly oscillating  sequences (see \cite[Section 3]{V1} for 
details). We denote by $\mathrm{SO}(\mathbb{Z}_+^m)$ the image of $\mathcal{T}_{\lambda}(L_{k\textup{-}qr}^{\infty})$ under this identification. 
We denote the compact set of maximal ideals of $\mathrm{SO}(\mathbb{Z}_+^m)$ (coinciding with the compact of maximal ideals of  $\mathcal{T}_{\lambda}(L_{k\textup{-}qr}^{\infty})$) by 
$M(\textup{SO})$. Note that $M(\textup{SO})$ densely contains $\mathbb{Z}_+^m$  via evaluation maps, and let $M_{\infty}= M(\textup{SO})\setminus \mathbb{Z}_+^m$. Although this 
will not be used in the paper, we mention that the set $M_{\infty}$ admits a quite sophisticated fibration, whose description is contained in \cite[Section 3]{BV1}.

\medskip
By Corollary \ref{co:f_ac} (and Remark \ref{re:abrev}), for each $a \in L_{k\textup{-}qr}^{\infty}$ and $c \in {\rm VO}_{\partial}(\mathbb{B}^{n-\ell})$, we have
\begin{equation} \label{quasi-radial_VO_decomposition}
 {\bf T}^{\lambda}_{f_{ac}}  \asymp \bigoplus_{\rho \in \mathbb{Z}_+^m} \gamma_{a,k,\lambda}(\rho)I \otimes T^{\lambda+|\rho|+\ell}_c 
 = \bigoplus_{\rho \in \mathbb{Z}_+^m} I_{\mathscr{H}_{\rho}} \otimes T^{\lambda+|\rho|+\ell}_{\gamma_{a,k,\lambda}(\rho)\,c}.
\end{equation}
Here the operator $I_{\mathscr{H}_{\rho}}$ indicates that  $T^{\lambda+|\rho|+\ell}_{\gamma_{a,k,\lambda}(\rho)\,c}$ is taken with multiplicity $\dim \mathscr{H}_{\rho}$.
\begin{lem} \label{le:comp_k-qr_VO}
 The intersection $\boldsymbol{\mathcal{T}}_{\lambda}(L^{\infty}_{k\textup{-}qr}, {\rm VO}_{\partial}(\mathbb{B}^{n-\ell})) \cap \boldsymbol{\mathcal{K}}(\mathcal{A}^2_{\lambda}(\mathbb{B}^n))$ 
 consists of all operators of the form
\begin{equation*}
 \boldsymbol{K}^{\lambda} \asymp \bigoplus_{\rho \in \mathbb{Z}_+^m} I_{\mathscr{H}_{\rho}} \otimes K_{\rho},
\end{equation*}
where each $K_{\rho}$ is compact on $\mathcal{A}^2_{\lambda+|\rho|+\ell}(\mathbb{B}^{n-\ell})$ and $\|K_{\rho}\| \to 0$ as $|\rho| \to \infty$.
\end{lem}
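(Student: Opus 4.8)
The plan is to prove the two inclusions separately, observing first that every element of $\boldsymbol{\mathcal{T}}_{\lambda}(L^{\infty}_{k\textup{-}qr}, {\rm VO}_{\partial}(\mathbb{B}^{n-\ell}))$ is block diagonal with respect to the decomposition \eqref{eq:caligraphic_H}. Indeed, by Corollary \ref{co:f_ac} and \eqref{quasi-radial_VO_decomposition} every generator has the form $\bigoplus_{\rho} I_{\mathscr{H}_{\rho}} \otimes S_{\rho}$, and the collection of all such operators with $\sup_{\rho} \|S_{\rho}\| < \infty$ is a norm-closed $*$-subalgebra of $\mathcal{L}(\mathcal{H})$; hence the whole algebra is contained in it, and the analysis reduces to the blocks $S_{\rho} \in \mathcal{L}(\mathcal{A}^2_{\lambda+|\rho|+\ell}(\mathbb{B}^{n-\ell}))$.

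For the inclusion ``$\subseteq$'', I would take a compact $\boldsymbol{T}^{\lambda} \asymp \bigoplus_{\rho} I_{\mathscr{H}_{\rho}} \otimes S_{\rho}$. Restricting a compact operator to the invariant subspace $\mathcal{H}_{\rho}$ shows that each $I_{\mathscr{H}_{\rho}} \otimes S_{\rho}$ is compact, and since $\mathscr{H}_{\rho}$ is nonzero and finite-dimensional this forces $S_{\rho} =: K_{\rho}$ to be compact. To see $\|K_{\rho}\| \to 0$ as $|\rho|\to\infty$, I would argue by contradiction: if $\|K_{\rho^{(j)}}\| \geq \varepsilon$ along some sequence with $|\rho^{(j)}| \to \infty$, pick unit vectors $v_{\rho^{(j)}} \in \mathcal{H}_{\rho^{(j)}}$ with $\|(I_{\mathscr{H}_{\rho^{(j)}}} \otimes K_{\rho^{(j)}}) v_{\rho^{(j)}}\| \geq \varepsilon/2$; these are orthonormal, hence weakly null, so a compact operator must send them to $0$ in norm, contradicting $\|\boldsymbol{T}^{\lambda} v_{\rho^{(j)}}\| \geq \varepsilon/2$.

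For the reverse inclusion ``$\supseteq$'', compactness of $\bigoplus_{\rho} I_{\mathscr{H}_{\rho}} \otimes K_{\rho}$ (with $K_{\rho}$ compact and $\|K_{\rho}\| \to 0$) follows exactly as in Lemma \ref{le:comp_VO}: the finite truncations $\bigoplus_{|\rho|\le N} I_{\mathscr{H}_{\rho}} \otimes K_{\rho}$ are compact because each $\mathscr{H}_{\rho}$ is finite-dimensional, and they converge in norm since $\sup_{|\rho|>N}\|K_{\rho}\| \to 0$. The genuinely new point, and the main obstacle, is to show that such operators actually \emph{lie in the algebra}. Here I would use the $k$-quasi-radial factor to isolate a single level: since finitely supported sequences belong to $\mathrm{SO}(\mathbb{Z}_+^m)$ (equivalently $c_0(\mathbb{Z}_+^m) \subseteq \mathrm{SO}(\mathbb{Z}_+^m)$), for each fixed $\rho_0$ the subalgebra $\boldsymbol{\mathcal{T}}_{\lambda}(L^{\infty}_{k\textup{-}qr}, 1)$ contains the element $\bigoplus_{\rho} \delta_{\rho,\rho_0} I_{\mathscr{H}_{\rho}} \otimes I$. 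Multiplying it by the generators $\boldsymbol{T}^{\lambda}_{f_c}$ and passing to the closed subalgebra generated by these products over all $c \in {\rm VO}_{\partial}(\mathbb{B}^{n-\ell})$ produces, on the single level $\rho_0$, a copy of $\mathcal{T}_{\lambda+|\rho_0|+\ell}({\rm VO}_{\partial}(\mathbb{B}^{n-\ell}))$. As this contains $\mathcal{T}_{\lambda+|\rho_0|+\ell}(C(\overline{\mathbb{B}}^{n-\ell}))$ and therefore the full ideal $\mathcal{K}(\mathcal{A}^2_{\lambda+|\rho_0|+\ell}(\mathbb{B}^{n-\ell}))$, the algebra contains every $\bigoplus_{\rho} \delta_{\rho,\rho_0} I_{\mathscr{H}_{\rho}} \otimes K$ with $K$ compact. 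Summing finitely many of these yields all finitely supported $\bigoplus_{\rho} I_{\mathscr{H}_{\rho}} \otimes K_{\rho}$, and the truncation argument together with norm-closedness of the algebra completes the proof.

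The delicate step is thus the level isolation, which hinges on $\mathrm{SO}(\mathbb{Z}_+^m)$ containing the characteristic sequences of single points. I would verify this directly from the definition of slow oscillation, noting that any sequence vanishing off a finite set trivially has vanishing oscillation at infinity, so that $c_0(\mathbb{Z}_+^m)$ sits inside $\mathrm{SO}(\mathbb{Z}_+^m)$; everything else is the by-now familiar combination of the block-diagonal compactness criterion and approximation by finite truncations.
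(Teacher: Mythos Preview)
Your proof is correct and follows essentially the same route as the paper: block-diagonal reduction, the truncation argument for compactness (as in Lemma~\ref{le:comp_VO}), and level isolation via a projection in $\boldsymbol{\mathcal{T}}_{\lambda}(L^{\infty}_{k\textup{-}qr},1)$ combined with the fact that $\iota_{|\rho|}$ maps $\boldsymbol{\mathcal{T}}_{\lambda}(1,{\rm VO}_{\partial}(\mathbb{B}^{n-\ell}))$ onto $\mathcal{T}_{\lambda+|\rho|+\ell}({\rm VO}_{\partial}(\mathbb{B}^{n-\ell}))\supset\mathcal{K}$. The one point to tighten is your justification of the level projection: in this paper $\mathrm{SO}(\mathbb{Z}_+^m)$ is \emph{defined} as the image of $\mathcal{T}_{\lambda}(L^{\infty}_{k\textup{-}qr})$ under the eigenvalue map, so ``vanishing oscillation at infinity'' alone does not establish $\delta_{\rho,\rho_0}\in\mathrm{SO}(\mathbb{Z}_+^m)$; the paper invokes \cite[Corollary~3.3]{BV1} for exactly this fact, and you should do likewise (or supply the short argument that $c_0(\mathbb{Z}_+^m)$ lies in the eigenvalue algebra).
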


\begin{proof}
By Equation \eqref{quasi-radial_VO_decomposition} every operator $\boldsymbol{K}^{\lambda}$ in $\boldsymbol{\mathcal{T}}_{\lambda}
(L^{\infty}_{k\textup{-}qr}, {\rm VO}_{\partial}(\mathbb{B}^{n-\ell}))$ has  the form
\[ \boldsymbol{K}^{\lambda} \asymp \bigoplus_{\rho \in \mathbb{Z}_+^m} I_{\mathscr{H}_{\rho}} \otimes K_{\rho}.\]
If $\boldsymbol{K}^{\lambda}$ is compact, it is easy to see that every $K_{\rho}$ has to be compact and $\|K_{\rho}\| \to 0$ as $|\rho| \to \infty$.

Conversely, for $\rho \in \mathbb{Z}_+^m$ let $K_{\rho} \in \mathcal{K}(\mathcal{A}^2_{\lambda+|\rho|+\ell}(\mathbb{B}^{n-\ell}))$ be arbitrary compact operators with $\|K_{\rho}\| \to 0$ as 
$|\rho| \to \infty$. Consider $\boldsymbol{K}_n^{\lambda} :\asymp \bigoplus_{|\rho| \leq n} I_{\mathscr{H}_{\rho}} \otimes K_{\rho}$ for $n \in \N$. Then the sequence 
$(\boldsymbol{K}_n^{\lambda})_{n \in \N}$ converges to $\boldsymbol{K}^{\lambda} \asymp \bigoplus_{\rho \in \mathbb{Z}_+^m} I_{\mathscr{H}_{\rho}} \otimes K_{\rho}$ as in the proof of 
Lemma \ref{le:comp_VO}. In particular, $\boldsymbol{K}^{\lambda}$ is compact. 
It remains to show that all operators $\boldsymbol{K}_n^{\lambda}$ belong to $\boldsymbol{\mathcal{T}}_{\lambda}(L^{\infty}_{k\textup{-}qr}, {\rm VO}_{\partial}(\mathbb{B}^{n-\ell}))$, or that 
for each $\rho \in \mathbb{Z}_+^m$ the operator
\begin{equation} \label{eq:T}
 {\bf T}^{\lambda}_{\rho} \asymp \bigoplus_{\kappa \in \mathbb{Z}_+^m} T_{\kappa}, \ \ \ \textrm{where} \ \ T_{\kappa} =
\begin{cases}
 0, &  \textrm{for} \ \ \kappa \neq \rho, \\
 I_{\mathscr{H}_{\rho}} \otimes K_{\rho}, &  \textrm{for} \ \ \kappa = \rho.                                                                                                                                                                                                            
\end{cases}
\end{equation}
belongs to $\boldsymbol{\mathcal{T}}_{\lambda}(L^{\infty}_{k\textup{-}qr}, {\rm VO}_{\partial}(\mathbb{B}^{n-\ell}))$.

By \cite[Corollary 3.3]{BV1} and Corollary \ref{co:f_ac}, there is an operator ${\bf P}^{\lambda}_{\rho} \in \boldsymbol{\mathcal{T}}_{\lambda}(L^{\infty}_{k\textup{-}qr}, 1)$
such that
\begin{equation*}
 {\bf P}^{\lambda}_{\rho} \asymp P_{\rho} \otimes I = \bigoplus_{\kappa \in \mathbb{Z}_+^m}
\delta_{\kappa,\rho} I_{\mathscr{H}_{\kappa}} \otimes I,
\end{equation*}
where $P_{\rho}$ is the orthogonal projection of $\mathcal{A}^2_{\lambda}(\mathbb{B}^{\ell})$ onto $\mathscr{H}_{\kappa}$. Then, by Proposition \ref{iota_rho_VO}, there is an operator ${\bf K}_{\rho} \in \boldsymbol{\mathcal{T}}_{\lambda}(1, {\rm VO}_{\partial}(\mathbb{B}^{n-\ell}))$
such that $\iota_{|\rho|}({\bf K}_{\rho}) = K_{\rho}$. Thus ${\bf P}^{\lambda}_{\rho}{\bf K}_{\rho}$ belongs to $\boldsymbol{\mathcal{T}}_{\lambda}(L^{\infty}_{k\textup{-}qr}, {\rm VO}_{\partial}(\mathbb{B}^{n-\ell}))$ and is exactly the operator in \eqref{eq:T}.
\end{proof}

Below we frequently consider the tensor product $\mathcal{A} \otimes \mathcal{B}$ of two commutative $C^*$-algebras $\mathcal{A}$ and $\mathcal{B}$. Recall that as $\mathcal{A}$ and $\mathcal{B}$ are commutative (and thus nuclear), the $C^*$-norm on $\mathcal{A} \otimes \mathcal{B}$ is uniquely defined. In particular, if $M(\mathcal{A})$ and $M(\mathcal{B})$ are the (locally) compact sets of maximal ideals of $\mathcal{A}$ and $\mathcal{B}$, respectively, then 
\begin{equation} \label{tensor_product_maximal_ideals}
 \mathcal{A} \otimes \mathcal{B} \cong C(M(\mathcal{A})) \otimes C(M(\mathcal{B})) = 
 C(M(\mathcal{A}) \times M(\mathcal{B})).
\end{equation}
\par 
By $\mathcal{A} \otimes_a \mathcal{B}$ we denote the algebraic tensor product of $\mathcal{A}$ and $\mathcal{B}$, which consists of all finite sums of the form $\sum a_k \otimes b_k$, $a_k \in \mathcal{A}$ and $b_k \in \mathcal{B}$. Another auxiliary proposition is needed:

\begin{prop} \label{prop:Berezin_VO}
{\rm (see \cite[Theorem 25]{BCZ} for the unweighted case)}\\
Let $f \in {\rm VO}_{\partial}(\mathbb{B}^{n-\ell})$ and $\mu > -1$. Then
\[\lim_{|z| \to 1} |f(z) - \mathcal{B}_{\mu}[f](z)| = 0.\]
\end{prop}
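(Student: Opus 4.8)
The plan is to exploit the probabilistic normalization of the measure $dv_\mu$ together with the Möbius invariance of the Bergman metric. Since the constant $c_\mu^{(n-\ell)}$ is chosen so that $v_\mu(\mathbb{B}^{n-\ell}) = 1$, and since $\mathcal{B}_\mu[f](z) = \int_{\mathbb{B}^{n-\ell}} f\circ\varphi_z(w)\, dv_\mu(w)$, where $\varphi_z$ is the Möbius automorphism interchanging $0$ and $z$, I can subtract the constant $f(z) = \int_{\mathbb{B}^{n-\ell}} f(z)\, dv_\mu(w)$ to write
\[
f(z) - \mathcal{B}_\mu[f](z) = \int_{\mathbb{B}^{n-\ell}} \big(f(z) - f(\varphi_z(w))\big)\, dv_\mu(w).
\]
It then suffices to control the integrand by an oscillation of $f$ and estimate the integral.

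The geometric heart of the argument is that each $\varphi_z$ is an isometry of the Bergman metric $\beta$ with $\varphi_z(0) = z$; consequently $\beta(\varphi_z(w), z) = \beta(\varphi_z(w), \varphi_z(0)) = \beta(w, 0)$, which is independent of $z$. Thus, if $w$ ranges over a fixed Bergman ball $B_R := \{w \in \mathbb{B}^{n-\ell} : \beta(w,0) \le R\}$ about the origin, the averaged point $\varphi_z(w)$ stays within Bergman distance $R$ of $z$, and the integrand is dominated by the oscillation
\[
\omega_R(f)(z) := \sup\{|f(u) - f(z)| : \beta(u,z) \le R\}.
\]
By the defining property of ${\rm VO}_\partial(\mathbb{B}^{n-\ell})$ one has $\omega_R(f)(z) \to 0$ as $|z| \to 1$ for every fixed $R > 0$.

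With these two ingredients I would run the standard $\varepsilon/2$ splitting of the integral over $B_R$ and its complement. Given $\varepsilon > 0$, I first use that $v_\mu$ is a finite measure and that $B_R$ increases to $\mathbb{B}^{n-\ell}$ as $R \to \infty$ (every point of $\mathbb{B}^{n-\ell}$ lies at finite Bergman distance from $0$) to pick $R$ so large that $2\|f\|_\infty\, v_\mu(\mathbb{B}^{n-\ell} \setminus B_R) < \varepsilon/2$; on this exceptional set the integrand is bounded crudely by $2\|f\|_\infty$. With $R$ now fixed, the contribution of $B_R$ is at most $\omega_R(f)(z)\, v_\mu(B_R) \le \omega_R(f)(z)$, which drops below $\varepsilon/2$ once $|z|$ is close enough to $1$. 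Adding the two pieces yields $|f(z) - \mathcal{B}_\mu[f](z)| < \varepsilon$ for all $z$ with $|z|$ sufficiently near $1$, which is the assertion.

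The only genuinely delicate point is the identity $\beta(\varphi_z(w), z) = \beta(w,0)$, which transports the hypothesis of vanishing oscillation near the boundary — phrased relative to the center $z$ — onto the averaged variable $\varphi_z(w)$; everything else is the routine estimate above. If the definition of ${\rm VO}_\partial(\mathbb{B}^{n-\ell})$ at hand is stated for a single Bergman radius rather than for all radii, I would first invoke the standard equivalence (valid within ${\rm BUC}$, since a Bergman ball of radius $R$ can be chained from balls of radius $1$ whose centers still approach $\partial\mathbb{B}^{n-\ell}$) so that the fixed $R$ chosen in the splitting step is legitimate.
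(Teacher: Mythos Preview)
Your proof is correct and follows essentially the same approach as the paper's: both split the integral $\int_{\mathbb{B}^{n-\ell}} |f(z)-f(\varphi_z(w))|\,dv_\mu(w)$ into a large ball about the origin (Euclidean $r\mathbb{B}^{n-\ell}$ in the paper, Bergman $B_R$ for you --- these coincide since the Bergman metric is radial at $0$) and its small-measure complement, then invoke the vanishing-oscillation hypothesis on the inner piece and the crude bound $2\|f\|_\infty$ on the outer one. Your version makes the M\"obius isometry $\beta(\varphi_z(w),z)=\beta(w,0)$ explicit, which the paper uses implicitly when it asserts $|f(z)-f(w)|<\varepsilon$ on $\varphi_z^{-1}(r\mathbb{B}^{n-\ell})$.
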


\begin{proof}
Let $\varepsilon > 0$ and choose $r \in (0,1)$ sufficiently large such that $dv_{\mu}(r\mathbb{B}^{n-\ell}) > 1 - \frac{\varepsilon}{2\|f\|_{\infty}}$. Now choose $z$ sufficiently close to the boundary such 
that $|f(z) - f(w)| < \varepsilon$ for all $w \in \varphi_z^{-1}(r\mathbb{B}^{n-\ell})$, where $\varphi_z$ is the usual automorphism on $\mathbb{B}^{n-\ell}$ that interchanges $0$ and $z$. Then
\begin{align*}
|f(z) - \mathcal{B}_{\mu}[f](z)| &\leq \int_{\mathbb{B}^{n-\ell}} |f(z)-f(\varphi_z(w))| \, dv_{\mu}(w)\\
&= \int_{r\mathbb{B}^{n-\ell}} |f(z)-f(\varphi_z(w))| \, dv_{\mu}(w) \\
&+ \int_{\mathbb{B}^{n-\ell} \setminus r\mathbb{B}^{n-\ell}} |f(z)-f(\varphi_z(w))| \, dv_{\mu}(w)\\
&< \int_{r\mathbb{B}^{n-\ell}} \varepsilon \, dv_{\mu}(w) + \int_{\mathbb{B}^{n-\ell} \setminus r\mathbb{B}^{n-\ell}} 2\|f\|_{\infty} \, dv_{\mu}(w) < 2\varepsilon,
\end{align*}
and the proposition follows.
\end{proof}

Let $T_f$ be compact with symbol $f \in {\rm VO}_{\partial}(\mathbb{B}^{n-\ell})$. Then \cite[Theorem 5.5]{MiSuWi} implies that $\mathcal{B}_{\mu}(f)$ is contained in $C_0(\mathbb{B}^{n-\ell})$, the 
set of continuous functions on  $\mathbb{B}^{n-\ell}$ vanishing at the boundary $\partial\mathbb{B}^{n-\ell}$. Hence, by Proposition \ref{prop:Berezin_VO}, $f \in C_0(\mathbb{B}^{n-\ell})$. On the other hand, every Toeplitz operator with symbol in $C_0(\mathbb{B}^{n-\ell})$ is compact. Therefore $T_f$ with $f \in {\rm VO}_{\partial}(\mathbb{B}^{n-\ell})$ is compact if and only if $f$ belongs to $C_0(\mathbb{B}^{n-\ell})$. 
\begin{rem}{\rm 
 We remark that  the above statement remains true for compact Toeplitz operators with symbols in ${\rm BUC}(\mathbb{B}^{n-\ell})$ by \cite[Theorem 3.8]{BC2} 
(and even boundedness of the symbol is not required). }
\end{rem}

\begin{cor} \label{co:comp}
An operator $\boldsymbol{T}^{\lambda} \in \boldsymbol{\mathcal{T}}_{\lambda}(L^{\infty}_{k\textup{-}qr}, {\rm VO}_{\partial}(\mathbb{B}^{n-\ell}))$ of the form 
\begin{equation*}
 \boldsymbol{T}^{\lambda} \asymp \bigoplus_{\rho \in \mathbb{Z}_+^m} I_{\mathscr{H}_{\rho}} \otimes T_{d(\rho,z'')}^{\lambda+|\rho|+\ell}
\end{equation*}
with $d(\rho,z'') \in \mathrm{SO}(\mathbb{Z}_+^m) \otimes {\rm VO}_{\partial}(\mathbb{B}^{n-\ell})$ is compact if and only if $d(\rho,z'') \in c_0 \otimes C_0(\mathbb{B}^{n-\ell})$.
\end{cor}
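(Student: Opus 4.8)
The plan is to match the two defining conditions of compactness supplied by Lemma~\ref{le:comp_k-qr_VO} against the two conditions that describe membership in $c_0 \otimes C_0(\mathbb{B}^{n-\ell})$, working first level by level in $\rho$ and then asymptotically as $|\rho| \to \infty$. Writing $\mu_\rho := \lambda + |\rho| + \ell$ and $d_\rho := d(\rho,\cdot) \in {\rm VO}_{\partial}(\mathbb{B}^{n-\ell})$ for the slice obtained by evaluating the $\mathrm{SO}(\mathbb{Z}_+^m)$-factor at $\rho$, Lemma~\ref{le:comp_k-qr_VO} tells us that $\boldsymbol{T}^{\lambda} \asymp \bigoplus_{\rho} I_{\mathscr{H}_{\rho}} \otimes T_{d_\rho}^{\mu_\rho}$ is compact if and only if (i) each $T_{d_\rho}^{\mu_\rho}$ is compact and (ii) $\|T_{d_\rho}^{\mu_\rho}\| \to 0$ as $|\rho| \to \infty$. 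On the other side, using the identification $c_0 \otimes C_0(\mathbb{B}^{n-\ell}) \cong C_0(\mathbb{Z}_+^m \times \mathbb{B}^{n-\ell})$ together with the discreteness of $\mathbb{Z}_+^m$, one checks directly that $d \in c_0 \otimes C_0(\mathbb{B}^{n-\ell})$ if and only if (a) $d_\rho \in C_0(\mathbb{B}^{n-\ell})$ for every $\rho$ and (b) $\|d_\rho\|_\infty \to 0$ as $|\rho| \to \infty$. The corollary then reduces to proving the two equivalences (i)$\Leftrightarrow$(a) and (ii)$\Leftrightarrow$(b).

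The equivalence (i)$\Leftrightarrow$(a) is immediate from the compactness criterion established just above the corollary (via \cite[Theorem 5.5]{MiSuWi} and Proposition~\ref{prop:Berezin_VO}): for a symbol in ${\rm VO}_{\partial}(\mathbb{B}^{n-\ell})$ the operator $T_{d_\rho}^{\mu_\rho}$ is compact precisely when $d_\rho \in C_0(\mathbb{B}^{n-\ell})$, and this holds for each weight $\mu_\rho > -1$. For (ii)$\Leftrightarrow$(b), the direction (b)$\Rightarrow$(ii) is trivial since $\|T_{d_\rho}^{\mu_\rho}\| \le \|d_\rho\|_\infty$. The reverse direction (ii)$\Rightarrow$(b) is the substantive part, and the crucial feature is that the weights $\mu_\rho$ grow to infinity along with $|\rho|$, so a sharp single-weight norm identity is not needed. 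I would write $\|d_\rho\|_\infty \le \|\mathcal{B}_{\mu_\rho}[d_\rho]\|_\infty + \|d_\rho - \mathcal{B}_{\mu_\rho}[d_\rho]\|_\infty$, bound the first summand by $\|\mathcal{B}_{\mu_\rho}[d_\rho]\|_\infty \le \|T_{d_\rho}^{\mu_\rho}\|$ (because $\mathcal{B}_{\mu_\rho}[d_\rho](z) = \langle T_{d_\rho}^{\mu_\rho} k_z^{\mu_\rho}, k_z^{\mu_\rho}\rangle_{\mu_\rho}$ with $\|k_z^{\mu_\rho}\| = 1$), and then show the second summand tends to $0$ as $|\rho| \to \infty$.

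Controlling $\|d_\rho - \mathcal{B}_{\mu_\rho}[d_\rho]\|_\infty$ uniformly in $\rho$ is where the argument has teeth, and it rests on a \emph{uniform modulus of continuity} for the family $\{d_\rho\}_\rho$ in the Bergman metric $\beta$. From the M\"obius invariance $\beta(z,\varphi_z(w)) = \beta(0,w)$ one obtains the pointwise bound $|d_\rho(z) - \mathcal{B}_{\mu_\rho}[d_\rho](z)| \le \int_{\mathbb{B}^{n-\ell}} \omega_{d_\rho}(\beta(0,w)) \, dv_{\mu_\rho}(w)$, where $\omega_{d_\rho}$ is the modulus of continuity of $d_\rho$. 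I would establish that $\omega(\delta) := \sup_\rho \omega_{d_\rho}(\delta) \to 0$ as $\delta \to 0$ by verifying it first on the algebraic tensor product $\mathrm{SO}(\mathbb{Z}_+^m) \otimes_a {\rm VO}_{\partial}(\mathbb{B}^{n-\ell})$, where a finite sum $\sum_k s_k(\rho) c_k$ has modulus bounded by $\sum_k \|s_k\|_\infty \omega_{c_k}$ independently of $\rho$, and then passing to the norm closure by an $\varepsilon$-approximation, evaluation at $\rho$ being contractive. With such a uniform $\omega$ in hand, the concentration of the measures $dv_{\mu_\rho}$ at the origin as $\mu_\rho \to \infty$ (the quantitative content of \cite[Proposition 4.4]{BC1}) forces $\int \omega(\beta(0,w)) \, dv_{\mu_\rho}(w) \to 0$, which combined with (ii) yields (b). The main obstacle is exactly this uniform-continuity step: for a fixed weight $\|T_c^{\mu}\|$ may be strictly smaller than $\|c\|_\infty$, so one cannot argue level by level, and the proof genuinely relies on the joint tensor-product structure of $d$ together with the growth $\mu_\rho \to \infty$.
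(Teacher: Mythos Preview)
Your proof is correct and matches the approach the paper implicitly has in mind: the corollary is stated without proof, as a direct consequence of Lemma~\ref{le:comp_k-qr_VO} together with the compactness criterion $T_f^{\mu}$ compact $\Leftrightarrow f\in C_0(\mathbb{B}^{n-\ell})$ recorded just before it. You have correctly identified the one nontrivial detail the paper glosses over, namely (ii)$\Rightarrow$(b): since the symbol $d_\rho$ varies with $\rho$, one cannot simply invoke \cite[Proposition~4.4]{BC1} for a fixed function, and your uniform-modulus-of-continuity argument (first on the algebraic tensor product, then passing to the closure) is exactly what is needed to make the Berezin-transform estimate $\|d_\rho - \mathcal{B}_{\mu_\rho}[d_\rho]\|_\infty \to 0$ go through uniformly as $|\rho|\to\infty$.
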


We describe now a general form of elements from  $\boldsymbol{\mathcal{T}}_{\lambda}(L^{\infty}_{k\textup{-}qr}, {\rm VO}_{\partial}(\mathbb{B}^{n-\ell}))$.

\begin{thm} \label{th:big-repr}
 Each element $\boldsymbol{T}^{\lambda} \in \boldsymbol{\mathcal{T}}_{\lambda}(L^{\infty}_{k\textup{-}qr}, {\rm VO}_{\partial}(\mathbb{B}^{n-\ell}))$ admits the representation
\begin{equation} \label{eq:big-repr} 
 \boldsymbol{T}^{\lambda} \asymp \bigoplus_{\rho \in \mathbb{Z}_+^m} I_{\mathscr{H}_{\rho}} \otimes (T_{c(\rho,z'')}^{\lambda+|\rho|+\ell} + K_{\rho}),
\end{equation}
where $c = c(\rho,z'') \in \mathrm{SO}(\mathbb{Z}_+^m) \otimes {\rm VO}_{\partial}(\mathbb{B}^{n-\ell})$, the operators $K_{\rho}$ are compact for all $\rho \in \mathbb{Z}_+^m$ 
and $\|K_{\rho}\| \to 0$ as $|\rho| \to \infty$.
\end{thm}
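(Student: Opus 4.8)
The plan is to follow the strategy of Theorem \ref{th:rep_VO}: first establish the representation on a dense subalgebra and then pass to the norm closure. The essential new difficulty compared with Section \ref{se:VO} is that the symbol now genuinely depends on $\rho$ and the representation is no longer unique, so the limiting argument must be carried out in a suitable quotient where the non-unique (interior, finite-level) part of the symbol has been divided out.

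First I would treat the dense subalgebra $\boldsymbol{\mathcal{D}}_{\lambda}$ of finite sums of finite products of generators ${\bf T}^{\lambda}_{f_{ac}}$. By Corollary \ref{co:f_ac} every such product splits as a $k$-quasi-radial factor (diagonal, with eigenvalue sequence in $\mathrm{SO}(\mathbb{Z}_+^m)$) times a factor from $\boldsymbol{\mathcal{T}}_{\lambda}(1,{\rm VO}_{\partial}(\mathbb{B}^{n-\ell}))$. Using the decomposition \eqref{quasi-radial_VO_decomposition} one obtains at level $\rho$ (with $\mu = \lambda+|\rho|+\ell$) an operator $s(\rho)\prod_i T^{\mu}_{c_i}$ with $s \in \mathrm{SO}(\mathbb{Z}_+^m)$; the compact semi-commutator property of ${\rm VO}_{\partial}(\mathbb{B}^{n-\ell})$ lets me rewrite $\prod_i T^{\mu}_{c_i} = T^{\mu}_{\prod_i c_i} + R_{\rho}$ with $R_{\rho}$ compact, and \cite[Theorem 3.8]{BaHaVa} gives $\|R_{\rho}\| \to 0$ as $|\rho| \to \infty$. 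Hence every element of $\boldsymbol{\mathcal{D}}_{\lambda}$ has the form \eqref{eq:big-repr} with symbol in the algebraic tensor product $\mathrm{SO}(\mathbb{Z}_+^m) \otimes_a {\rm VO}_{\partial}(\mathbb{B}^{n-\ell})$ and compact corrections whose norms vanish at infinity.

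For the closure, I would fix ${\bf T}^{\lambda,k} \to {\bf T}^{\lambda}$ in $\boldsymbol{\mathcal{D}}_{\lambda}$ with symbols $c_k$ and corrections $K^k_{\rho}$, and show that the images $\hat c_k$ of $c_k$ in the quotient
\[\mathcal{Q} := \big(\mathrm{SO}(\mathbb{Z}_+^m) \otimes {\rm VO}_{\partial}(\mathbb{B}^{n-\ell})\big)/\big(c_0(\mathbb{Z}_+^m) \otimes C_0(\mathbb{B}^{n-\ell})\big)\]
form a Cauchy sequence. Via \eqref{tensor_product_maximal_ideals} I identify $\mathcal{Q} \cong C(B)$ with $B = (M_{\infty} \times M(\textup{VO})) \cup (M(\textup{SO}) \times M_{\partial})$, so that $\|\hat d\|_{\mathcal{Q}} = \sup_B |d|$. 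I would bound this supremum by $\|{\bf T}^{\lambda,k}-{\bf T}^{\lambda,m}\|$ in two pieces: on $M(\textup{SO}) \times M_{\partial}$ by restricting to each level $\rho$ (a contractive $*$-homomorphism $\pi_{\rho}$ onto $\mathcal{T}_{\mu}({\rm VO}_{\partial}(\mathbb{B}^{n-\ell}))$, as in Proposition \ref{iota_rho_VO}) and using the Calkin isomorphism $\mathcal{T}_{\mu}({\rm VO}_{\partial}(\mathbb{B}^{n-\ell}))/\mathcal{K} \cong C(M_{\partial})$; and on $M_{\infty} \times M(\textup{VO})$ by letting $|\rho| \to \infty$, where $\|K^k_{\rho}-K^m_{\rho}\| \to 0$ and, since $c_k-c_m$ is a finite sum of elementary tensors, $\|\mathcal{B}_{\mu}[(c_k-c_m)(\rho,\cdot)] - (c_k-c_m)(\rho,\cdot)\|_{\infty} \to 0$ uniformly in $\rho$ by \cite[Proposition 4.4]{BC1}, so that $\|T^{\mu}_{(c_k-c_m)(\rho,\cdot)}\|$ recovers $\|(c_k-c_m)(\rho,\cdot)\|_{\infty}$ in the limit. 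This yields $\|\hat c_k - \hat c_m\|_{\mathcal{Q}} \leq \|{\bf T}^{\lambda,k}-{\bf T}^{\lambda,m}\|$, hence $\hat c_k \to \hat c$ in $\mathcal{Q}$, and I lift $\hat c$ to some $c \in \mathrm{SO}(\mathbb{Z}_+^m) \otimes {\rm VO}_{\partial}(\mathbb{B}^{n-\ell})$.

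Finally I would set $K_{\rho} := \pi_{\rho}({\bf T}^{\lambda}) - T^{\mu}_{c(\rho,\cdot)}$. Compactness of $K_{\rho}$ follows because $\pi_{\rho}({\bf T}^{\lambda}) \in \mathcal{T}_{\mu}({\rm VO}_{\partial}(\mathbb{B}^{n-\ell}))$ and its Calkin class equals $[T^{\mu}_{c(\rho,\cdot)}]$, the boundary symbols $c_k(\rho,\cdot)|_{M_{\partial}}$ converging to $c(\rho,\cdot)|_{M_{\partial}}$ because $M(\textup{SO}) \times M_{\partial} \subset B$. The decay $\|K_{\rho}\| \to 0$ as $|\rho| \to \infty$ I would obtain by the usual $\varepsilon/3$-argument: $\|K_{\rho}\| \leq \|{\bf T}^{\lambda}-{\bf T}^{\lambda,k}\| + \|K^k_{\rho}\| + \|T^{\mu}_{(c_k-c)(\rho,\cdot)}\|$, where $\limsup_{|\rho| \to \infty}$ of the last term is bounded by $\|\hat c_k - \hat c\|_{\mathcal{Q}}$ (continuity of $c_k-c$ on $M(\textup{SO}) \times M(\textup{VO})$ together with $M_{\infty} \times M(\textup{VO}) \subset B$), and both $k$-dependent terms tend to $0$. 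The hard part is precisely this two-sided norm estimate bounding the quotient norm of the symbol by the operator norm of ${\bf T}^{\lambda}$; everything else is bookkeeping parallel to Theorem \ref{th:rep}.
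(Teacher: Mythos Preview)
Your proposal is correct and follows essentially the same route as the paper: both first handle the dense subalgebra via the compact semi-commutator property and \cite[Theorem 3.8]{BaHaVa}, then pass to the closure by showing that the symbols converge on the ``boundary'' set $\Diamond = (M_{\infty} \times M(\textup{VO})) \cup (M(\textup{SO}) \times M_{\partial})$ using Berezin-transform and Calkin-quotient estimates, and finally recover a global symbol by extension. The only cosmetic difference is that you phrase the limiting step in the quotient $\mathcal{Q} \cong C(\Diamond)$ and lift at the end, whereas the paper works pointwise on $\Diamond$, applies Tietze's theorem explicitly, and uses the auxiliary symbols $c_q' = c - d_q$ (with $d_q$ a controlled extension of $c - c_q|_{\Diamond}$) to make the $\rho$-uniform convergence of the Toeplitz parts visible; the underlying analytic content is identical.
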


\begin{proof}
By \eqref{quasi-radial_VO_decomposition}, the generators $\boldsymbol{T}^{\lambda}$ of the algebra $\boldsymbol{\mathcal{T}}_{\lambda}(
L^{\infty}_{k\textup{-}qr}, {\rm VO}_{\partial}(\mathbb{B}^{n-\ell}))$ take the form \eqref{eq:big-repr}. Now consider finite sums of finite products of generators. To show that these operators 
can again be represented in the form \eqref{eq:big-repr} it clearly suffices to consider finite products. For  $j = 1, \ldots, q$  let $\boldsymbol{T}_j^{\lambda} :\asymp 
\bigoplus_{\rho \in \mathbb{Z}_+^m} I_{\mathscr{H}_{\rho}} \otimes T^{\lambda+|\rho|+\ell}_{\gamma_{a_j,k,\lambda}(\rho)\,c_j}$ be generators. Then
\begin{align*}
\boldsymbol{T}_1^{\lambda} \cdots \boldsymbol{T}_q^{\lambda} &\asymp \bigoplus_{\rho \in \mathbb{Z}_+^m} I_{\mathscr{H}_{\rho}} \otimes 
(T^{\lambda+|\rho|+\ell}_{\gamma_{a_1,k,\lambda}(\rho)\,c_1} \cdots T^{\lambda+|\rho|+\ell}_{\gamma_{a_q,k,\lambda}(\rho)\,c_q})\\
&= \bigoplus_{\rho \in \mathbb{Z}_+^m} I_{\mathscr{H}_{\rho}} \otimes \gamma_{a_1,k,\lambda}(\rho) \cdots \gamma_{a_q,k,\lambda}(\rho)(T^{\lambda+|\rho|+\ell}_{c_1} 
\cdots T^{\lambda+|\rho|+\ell}_{c_q})\\
&= \bigoplus_{\rho \in \mathbb{Z}_+^m} I_{\mathscr{H}_{\rho}} \otimes (T^{\lambda+|\rho|+\ell}_{c(\rho,z'')} + K_{\rho}),
\end{align*}
where $c = c(\rho,z'') := \prod_{j = 1}^q \gamma_{a_j,k,\lambda}(\rho)c_j(z'') \in \mathrm{SO}(\mathbb{Z}_+^m) \otimes {\rm VO}_{\partial}(\mathbb{B}^{n-\ell})$,
\[K_{\rho}:= \left(\prod_{j = 1}^q \gamma_{a_j,k,\lambda}(\rho)\right)(T^{\lambda+|\rho|+\ell}_{c_1} \cdots T^{\lambda+|\rho|+\ell}_{c_q} - T^{\lambda+|\rho|+\ell}_{c(\rho,z'')})\]
is compact by the compact semi-commutator property of ${\rm VO}_{\partial}(\mathbb{B}^{n-\ell})$ and $\|K_{\rho}\| \to 0$ as $|\rho| \to \infty$ by \cite[Corollary 3.10]{BaHaVa}. 
Hence all finite sums of finite products of generators can be expressed in this form. The most difficult part now is to show that this remains true for operators in the closure.

Let $\boldsymbol{T}^{\lambda} \in \boldsymbol{\mathcal{T}}_{\lambda}(L^{\infty}_{k\textup{-}qr}, {\rm VO}_{\partial}(\mathbb{B}^{n-\ell}))$. Then there is a sequence of operators $(\boldsymbol{T}_q^{\lambda})_{q \in \N}$ that have the representation
\[\boldsymbol{T}_q^{\lambda} \asymp \bigoplus_{\rho \in \mathbb{Z}_+^m} I_{\mathscr{H}_{\rho}} \otimes (T_{c_q(\rho,z'')}^{\lambda+|\rho|+\ell} + K_{\rho,q})\]
and $\|\boldsymbol{T}^{\lambda} - \boldsymbol{T}_q^{\lambda}\| \to 0$ as $q \to \infty$.

By \eqref{tensor_product_maximal_ideals}, we may interpret $c_q$ as a continuous function on $M(\textup{SO}) \times M(\textup{VO})$.  
Let $\mu \in M_{\infty}$, and choose a net $(\rho_{\eta})$ in $\mathbb{Z}_+^m$ that converges to $\mu$. Clearly, $|\rho_{\eta}| \to \infty$ as $\rho_{\eta} \to \mu$. Now since
\[\left\|\mathcal{B}_{\lambda+|\rho|+\ell}[K_{\rho,q}]\right\|_{\infty} \leq \|K_{\rho,q}\| \to 0\]
as $|\rho| \to \infty$, we get
\begin{align*}
& \lim_{\rho_{\eta} \to \mu} \mathcal{B}_{\lambda+|\rho_{\eta}|+\ell}\big{[}T_{c_q(\rho_{\eta},\cdot)}^{\lambda+|\rho_{\eta}|+\ell} + K_{\rho_{\eta},q}\big{]}(z'') 
= \lim_{\rho_{\eta} \to \mu} \mathcal{B}_{\lambda+|\rho_{\eta}|+\ell}\big{[}T_{c_q(\rho_{\eta},\cdot)}^{\lambda+|\rho_{\eta}|+\ell}\big{]}(z'')\\
& \ = \lim_{\rho_{\eta} \to \mu} \mathcal{B}_{\lambda+|\rho_{\eta}|+\ell}\big{(}c_q(\rho_{\eta},\cdot)\big{)}(z'') 
= \lim_{|\rho_{\eta}| \to \infty} \mathcal{B}_{\lambda+|\rho_{\eta}|+\ell}(c_q(\mu,\cdot))(z'')\\
& \  + \lim_{\rho_{\eta} \to \mu} \mathcal{B}_{\lambda+|\rho_{\eta}|+\ell}\big{(}c_q(\rho_{\eta},\cdot) - c_q(\mu,\cdot)\big{)}(z'') = c_q(\mu,z'')
\end{align*}
uniformly for all $z'' \in \mathbb{B}^{n-\ell}$ by \cite[Proposition 4.4]{BC1}. As
\[\|\boldsymbol{T}_{q_1}^{\lambda} - \boldsymbol{T}_{q_2}^{\lambda}\| 
= \sup_{\rho \in \mathbb{Z}_+^m} \|T_{c_{q_1}(\rho,\cdot)}^{\lambda+|\rho|+\ell} + K_{\rho,q_1} - 
T_{c_{q_2}(\rho,\cdot)}^{\lambda+|\rho|+\ell} + K_{\rho,q_2}\|,\]
this implies $\|c_{q_1}(\mu,\cdot) - c_{q_2}(\mu,\cdot)\|_{\infty} \leq \|\boldsymbol{T}_{q_1}^{\lambda} - \boldsymbol{T}_{q_2}^{\lambda}\|$. Therefore the sequence $(c_q)_{q \in \N}$ 
restricted to $\mu \in M_{\infty}$ is a Cauchy sequence and hence converges uniformly to some function 
$$c_{\infty} \in  C\big{(}M_{\infty} \times  M(\textup{VO})\big{)}.$$
\par 
Similarly, let $\nu \in  M_{\partial}$ and choose a net $(z''_{\eta})$ in $\mathbb{B}^{n-\ell}$ that converges to $\nu$. 
Clearly, $|z''_{\eta}| \to 1$ as $z''_{\eta} \to \nu$. By \cite[Theorem 5.5]{MiSuWi}, we have \\ $\mathcal{B}_{\lambda+|\rho|+\ell}[K_{\rho,q}](z''_{\eta}) \to 0$ for every 
fixed $\rho \in \mathbb{Z}_+^m$ as $|z''_{\eta}| \to 1$. Thus
\begin{align*}
\lim_{z''_{\eta} \to \nu} \mathcal{B}_{\lambda+|\rho|+\ell}\big{[}T_{c_q(\rho,\cdot)}^{\lambda+|\rho|+\ell} + K_{\rho,q}\big{]}(z''_{\eta}) 
&= \lim_{z''_{\eta} \to \nu} \mathcal{B}_{\lambda+|\rho|+\ell}[T_{c_q(\rho,\cdot)}^{\lambda+|\rho|+\ell}](z''_{\eta})\\
&= \lim_{z''_{\eta} \to \nu} \mathcal{B}_{\lambda+|\rho|+\ell}\big{(}c_q(\rho,\cdot)\big{)}(z''_{\eta}) = c_q(\rho,\nu)
\end{align*}
for every $\rho \in \mathbb{Z}_+^m$ by Proposition \ref{prop:Berezin_VO}. As above, it follows that $(c_q)_{q \in \N}$ restricted to 
$\nu \in M_{\partial}$ uniformly converges to some $c_{\partial} \in C(M(\textup{SO})  \times M_{\partial})$. 
Hence we may define a continuous function $c$ as follows:
\[c(\mu,\nu) := \begin{cases} c_{\infty}(\mu,\nu) & \text{for } (\mu,\nu) \in M_{\infty} \times M(\textup{VO}), \\ 
c_{\partial} (\mu,\nu) & \text{for } (\mu,\nu) \in M(\textup{SO})  \times M_{\partial}. 
\end{cases}\]
Let
\begin{equation} \label{diamond}
\Diamond := \left( M_{\infty} \times    M(\textup{VO}) \right) \cup
 \left(M(\textup{SO})  \times M_{\partial} \right),
 \end{equation}
i.e.~the domain of $c$. As $\Diamond$ is a closed subset of $M(\textup{SO})  \times M(\textup{VO})$, we may extend $c$ to all of 
$M(\textup{SO})  \times M(\textup{VO})$ without increasing its norm by {\it Tietze's theorem}. Similarly, we may define the remainders 
$d_q(\mu,\nu) := c(\mu,\nu) - c_q(\mu,\nu)$ on $\Diamond$ and extend them to $M(\textup{SO}) \times M(\textup{VO})$ 
without increasing their norms. In particular, $d_q$ converges uniformly to $0$ as $q \to \infty$. Now consider $c_q' := c - d_q$ which coincides with $c_q$ on $\Diamond$ and therefore $c_q'-c_q \in c_0 \otimes C_0(\mathbb{B}^{n-\ell})$. By Corollary \ref{co:comp}, this implies that $T_{c_q'(\rho,z'')-c_q(\rho,z'')}^{\lambda+|\rho|+\ell}$ is compact. We may write
\begin{eqnarray*}
 \boldsymbol{T}_q^{\lambda} &\asymp& \bigoplus_{\rho \in \mathbb{Z}_+^m} I_{\mathscr{H}_{\rho}} \otimes \big{(}T_{c_q(\rho,z'')}^{\lambda+|\rho|+\ell} + K_{\rho,q}\big{)} \\
&=& \bigoplus_{\rho \in \mathbb{Z}_+^m} I_{\mathscr{H}_{\rho}} \otimes \big{(}T_{c_q'(\rho,z'')}^{\lambda+|\rho|+\ell} + T_{c_q(\rho,z'')-c_q^{\prime}(\rho,z'')}^{\lambda+|\rho|+\ell} + K_{\rho,q}\big{)},
\end{eqnarray*}
where $T_{c_q'(\rho,z'')}^{\lambda+|\rho|+\ell}$ converges to $T_{c(\rho,z'')}^{\lambda+|\rho|+\ell}$ (uniformly in $\rho$) and $T_{c_q(\rho,z'')-c_q^{\prime}(\rho,z'')}^{\lambda+|\rho|+\ell} + K_{\rho,q}$ is compact and tends to $0$ as $|\rho| \to \infty$. Taking the limit $q \to \infty$ implies that $\boldsymbol{T}^{\lambda}$ may be written as
\[\boldsymbol{T}^{\lambda} \asymp \bigoplus_{\rho \in \mathbb{Z}_+^m} I_{\mathscr{H}_{\rho}} \otimes (T_{c(\rho,z'')}^{\lambda+|\rho|+\ell} + K_{\rho}),\]
where $K_{\rho}$ is a compact operator. It remains to show that $\|K_{\rho}\| \to 0$ as $|\rho| \to \infty$. Choose $q$ sufficiently large such that 
$\|K_{\rho} - T_{c_q'(\rho,z'')-c_q(\rho,z'')}^{\lambda+|\rho|+\ell} - K_{\rho,q}\| < \varepsilon$ for all $\rho \in \mathbb{Z}_+^m$. Now choose $|\rho|$ sufficiently large such that 
$\|T_{c_q'(\rho,z'')-c_q(\rho,z'')}^{\lambda+|\rho|+\ell} - K_{\rho,q}\| < \varepsilon$. This implies $\|K_{\rho}\| < 2\varepsilon$ and the conclusion follows.
\end{proof}
We remark that, contrary to the statement of Theorem \ref{th:rep_VO}, the representation (\ref{eq:big-repr}) of an operator $\boldsymbol{T}^{\lambda} \in \boldsymbol{\mathcal{T}}_{\lambda}(L^{\infty}_{k\textup{-}qr}, {\rm VO}_{\partial}(\mathbb{B}^{n-\ell}))$ is not unique. The source of such a non-uniqueness is due to the following non-unique representation of the zero operator (Corollary \ref{co:comp})
\begin{equation} \label{eq:0-op}
  0 \asymp \bigoplus_{\rho \in \mathbb{Z}_+^m} I_{\mathscr{H}_{\rho}} \otimes (T_{d(\rho,z'')}^{\lambda+|\rho|+\ell} + K_{\rho}),
\end{equation}
where $d=d(\rho,z'') \in c_0 \otimes C_0(\mathbb{B}^{n-\ell})$ and $K_{\rho} = - T_{d(\rho,z'')}^{\lambda+|\rho|+\ell}$.
\vspace{1mm}\par 
We describe now a procedure of how to recover a representation \eqref{eq:big-repr} for an operator $\boldsymbol{T}^{\lambda} \in 
\boldsymbol{\mathcal{T}}_{\lambda}(L^{\infty}_{k\textup{-}qr}, {\rm VO}_{\partial}(\mathbb{B}^{n-\ell}))$. Decomposing $\boldsymbol{T}^{\lambda}$ into levels, we get
\[\boldsymbol{T}^{\lambda} \asymp \bigoplus_{\rho \in \mathbb{Z}_+^m} I_{\mathscr{H}_{\rho}} \otimes T_{\rho}\]
for some operators $T_{\rho}$ (cf.~Equation \eqref{quasi-radial_VO_decomposition}). According to Theorem \ref{th:big-repr}, there is a function 
$c = c(\rho,z'') \in \mathrm{SO}(\mathbb{Z}_+^m) \otimes {\rm VO}_{\partial}(\mathbb{B}^{n-\ell})$ such that
\[T_{\rho} = T_{c(\rho,\cdot)}^{\lambda+|\rho|+\ell} + K_{\rho}\]
for some compact operator $K_{\rho}$. This function $c$ is actually unique modulo $c_0 \otimes C_0(\mathbb{B}^{n-\ell})$ (which just produces different compact operators 
$K_{\rho}$) and can be derived as follows:
\vspace{1mm}\par 
Let $\mu \in M(\textup{SO})  \setminus \mathbb{Z}_+^m$ and choose a net $(\rho_{\eta})$ in $\mathbb{Z}_+^m$ that converges to $\mu$. Then, as in the proof of Theorem \ref{th:big-repr}, we have
\begin{align*}
c(\mu,z'') &= \lim_{\rho_{\eta} \to \mu} \mathcal{B}_{\lambda+|\rho_{\eta}|+\ell}\big{(}c(\rho_{\eta},\cdot)\big{)}(z'') 
= \lim_{\rho_{\eta} \to \mu} \mathcal{B}_{\lambda+|\rho_{\eta}|+\ell}\big{[}T_{c(\rho_{\eta},\cdot)}^{\lambda+|\rho_{\eta}|+\ell}\big{]}(z'')\\
&= \lim_{\rho_{\eta} \to \mu} \mathcal{B}_{\lambda+|\rho_{\eta}|+\ell}\big{[}T_{c(\rho_{\eta},\cdot)}^{\lambda+|\rho_{\eta}|+\ell} + K_{\rho_{\eta}}\big{]}(z'') 
= \lim_{\rho_{\eta} \to \mu} \mathcal{B}_{\lambda+|\rho_{\eta}|+\ell}\big{[}T_{\rho_{\eta}}\big{]}(z'')
\end{align*}
for every $z'' \in \mathbb{B}^{n-\ell}$. Likewise, let $\nu \in M_{\partial}$ and choose a net $(z''_{\eta})$ in $\mathbb{B}^{n-\ell}$ that converges to $\nu$. Then
\begin{align*}
c(\rho,\nu) &= \lim_{z''_{\eta} \to \nu} \mathcal{B}_{\lambda+|\rho|+\ell}\big{(}c(\rho,\cdot)\big{)}(z''_{\eta})
= \lim_{z''_{\eta} \to \nu} \mathcal{B}_{\lambda+|\rho|+\ell}\big{[}T_{c(\rho,\cdot)}^{\lambda+|\rho_{\eta}|+\ell}\big{]}(z''_{\eta})\\
&= \lim_{z''_{\eta} \to \nu} \mathcal{B}_{\lambda+|\rho|+\ell}\big{[}T_{c(\rho,\cdot)}^{\lambda+|\rho|+\ell} + K_{\rho}\big{]}(z''_{\eta}) 
= \lim_{z''_{\eta} \to \nu} \mathcal{B}_{\lambda+|\rho|+\ell}\big{[}T_{\rho}\big{]}(z''_{\eta})
\end{align*}
for $\rho \in \mathbb{Z}_+^m$. This fixes the symbol $c$ on the set $\Diamond$ defined in (\ref{diamond}).  
By applying the same arguments as in the proof of Theorem \ref{th:big-repr} it follows that $c$ is uniquely determined up to a function in $c_0 \otimes C_0(\mathbb{B}^{n-\ell})$. 
The compact part is now of course just $T_{\rho} - T_{c(\rho,\cdot)}^{\lambda+|\rho|+\ell}$, which  then depends on the actual choice of $c$.
\vspace{1mm}\par 
We wish to describe all irreducible representations of the algebra $\boldsymbol{\mathcal{T}}_{\lambda}(L^{\infty}_{k\textup{-}qr}, {\rm VO}_{\partial}(\mathbb{B}^{n-\ell}))$. The direct sum decomposition (\ref{eq:big-repr}) of its elements induces the irreducible representations:

\begin{itemize}
 \item [\textup{(i)}] {\it infinite dimensional (identical) representations} $\iota_{\rho}$ on $\mathcal{A}^2_{\lambda+|\rho|+\ell}(\mathbb{B}^{n-\ell})$, with $\rho \in \mathbb{Z}_+^m$, defined  by
\begin{equation*}
  \iota_{\rho}\, : \ \boldsymbol{T}^{\lambda} \asymp \bigoplus_{\rho \in \mathbb{Z}_+^m} I_{\mathscr{H}_{\rho}} \otimes (T_{c(\rho,z'')}^{\lambda+|\rho|+\ell} + K_{\rho}) \ \ \longmapsto \ \ T_{c(\rho,z'')}^{\lambda+|\rho|+\ell} + K_{\rho};
\end{equation*}
  \item [\textup{(ii)}] {\it one-dimensional representations} $\pi_{\rho, \nu}$ with $(\rho,\nu) \in \mathbb{Z}_+^m \times  M_{\partial} $, defined  by
\begin{equation*}
 \pi_{\rho,\nu}\, : \ \boldsymbol{T}^{\lambda} \asymp \bigoplus_{\rho \in \mathbb{Z}_+^m} I_{\mathscr{H}_{\rho}} \otimes (T_{c(\rho,z'')}^{\lambda+|\rho|+\ell} + K_{\rho}) \ \ \longmapsto \ \ c(\rho,\nu).
\end{equation*}
\end{itemize}
It is straightforward to see that these representations are not pairwise unitary equivalent. The ambiguity of the form (\ref{eq:big-repr}), caused by (\ref{eq:0-op}), does not effect 
the action of the above representations. We list representations which are induced by the ``quantization effect'': 
\vspace{1mm}\par
Given any $\mu \in M_{\infty}$, let $\{\rho_{\beta}\}_{\beta \in B}$ be a net converging to $\mu$. Then the map
\begin{eqnarray*}
 \nu_{\mu} &:&  \boldsymbol{T}^{\lambda} \asymp \bigoplus_{\rho \in \mathbb{Z}_+^m} I_{\mathscr{H}_{\rho}} \otimes (T_{c(\rho,z'')}^{\lambda+|\rho|+\ell} + K_{\rho}) \ 
 \longmapsto \ \lim_{\beta} \mathcal{B}_{\lambda+|\rho_{\beta}|+\ell} \big{[}T_{c(\rho_{\beta},z'')}^{\lambda+|\rho_{\beta}|+\ell} +  K_{\rho_{\beta}}\big{]} \\
&& = \ \lim_{\beta} \mathcal{B}_{\lambda+|\rho_{\beta}|+\ell} \big{[}T_{c(\rho_{\beta},z'')}^{\lambda+|\rho_{\beta}|+\ell}\big{]} =  c(\mu,\cdot)
\end{eqnarray*}
is well defined and a $^*$-homomorphism of the $C^*$-algebra $\boldsymbol{\mathcal{T}}_{\lambda}(L^{\infty}_{k\textup{-}qr}, {\rm VO}_ {\partial}(\mathbb{B}^{n-\ell}))$ onto 
the algebra $C(M(\textup{VO}))$. Note that the ambiguity of the form (\ref{eq:big-repr}), caused by (\ref{eq:0-op}), does not effect the action of this map.
\vspace{1mm}\par 
Each such map $\nu_{\mu}$ induces a family of one-dimensional representations of the $C^*$-algebra $\boldsymbol{\mathcal{T}}_{\lambda}(L^{\infty}_{k\textup{-}qr}, {\rm VO}_ {\partial}(\mathbb{B}^{n-\ell}))$, 
defined for each  $(\mu,z'') \in M_{\infty} \times M(\textup{VO})$ as follows:
\begin{eqnarray*}
 \nu_{\mu,z''} &:& \boldsymbol{T}^{\lambda} \asymp \bigoplus_{\rho \in \mathbb{Z}_+^m} I_{\mathscr{H}_{\rho}} \otimes (T_{c(\rho,z'')}^{\lambda+|\rho|+\ell} + K_{\rho}) \ 
 \overset{\nu_{\mu}}{\longmapsto} \ c(\mu,\cdot) \in C\big{(}M(\textup{VO})\big{)} 
 \\   && 
 \ \mapsto \ c(\mu,z'') \in \mathbb{C}.
\end{eqnarray*}
Consider operators  $\boldsymbol{T}_j^{\lambda} = \boldsymbol{T}_{c_j}^{\lambda} + \boldsymbol{K}_j^{\lambda}$ with $c_j \in \mathrm{SO}(\mathbb{Z}_+^m) \otimes {\rm VO}_{\partial}(\mathbb{B}^{n-\ell})$ and $\boldsymbol{K}_j^{\lambda}$ compact for $j=1,2$. Then the difference $\boldsymbol{T}_1^{\lambda} -\boldsymbol{T}_2^{\lambda}$ 
is compact if and only if $ \boldsymbol{T}_{c_1-c_2}^{\lambda}$ is compact which is equivalent to $$c_1-c_2 \in c_0 \times C_0(\mathbb{B}^{n-\ell})$$ (see Corollary \ref{co:comp}). 
As a consequence the Calkin algebra $\widehat{\boldsymbol{\mathcal{T}}}_{\lambda}\big{(}L^{\infty}_{k\textup{-}qr}, {\rm VO}_ {\partial}(\mathbb{B}^{n-\ell})\big{)}$, defined as
$$
\boldsymbol{\mathcal{T}}_{\lambda}\big{(}L^{\infty}_{k\textup{-}qr}, {\rm VO}_ {\partial}(\mathbb{B}^{n-\ell})\big{)} 
/ \left(\boldsymbol{\mathcal{T}}_{\lambda}\big{(}L^{\infty}_{k\textup{-}qr}, {\rm VO}_ {\partial}(\mathbb{B}^{n-\ell})\big{)} \cap 
\boldsymbol{\mathcal{K}}(\boldsymbol{\mathcal{A}}^2_{\lambda}(\mathbb{B}^n))\right),$$
is isomorphic and isometric to the quotient
\begin{equation*}
C\big{(} M(\textup{SO})   \times  M(\textup{VO}) \big{)} / \left(c_0 \times C_0(\mathbb{B}^{n-\ell})\right)
= C\big{(}M_{\infty} \times  M(\textup{VO}) \cup \mathbb{Z}_+^m \times M_{\partial} \big{)}.
\end{equation*}

\begin{thm}\label{Section_6_complete_list_of_representations}
The following list of irreducible representations of \\ $\boldsymbol{\mathcal{T}}_{\lambda}(L^{\infty}_{k\textup{-}qr}, {\rm VO}_ {\partial}(\mathbb{B}^{n-\ell}))$ is complete up to unitary equivalence:
\begin{itemize}
	\item[\textup{(i)}] $\iota_{\rho}\, : \ \boldsymbol{T}^{\lambda} \asymp \bigoplus_{\rho \in \mathbb{Z}_+^m} I_{\mathscr{H}_{\rho}} \otimes (T_{c(\rho,z'')}^{\lambda+|\rho|+\ell} + K_{\rho}) \ 
	\longmapsto \ T_{c(\rho,z'')}^{\lambda+|\rho|+\ell} + K_{\rho}$ for $\rho \in \mathbb{Z}_+^m$,
	\item[\textup{(ii)}] $\pi_{\rho,\nu}\, : \ \boldsymbol{T}^{\lambda} \asymp \bigoplus_{\rho \in \mathbb{Z}_+^m} I_{\mathscr{H}_{\rho}} \otimes (T_{c(\rho,z'')}^{\lambda+|\rho|+\ell} + K_{\rho}) \ \longmapsto \ c(\rho,\nu)$ 
	for $(\rho,\nu) \in \mathbb{Z}_+^m \times M_{\partial}$,
	\item[\textup{(iii)}] $\nu_{\mu,z''}\, : \ \boldsymbol{T}^{\lambda} \asymp \bigoplus_{\rho \in \mathbb{Z}_+^m} I_{\mathscr{H}_{\rho}} \otimes (T_{c(\rho,z'')}^{\lambda+|\rho|+\ell} + K_{\rho}) \longmapsto c(\mu,z'')$ for $(\mu,z'') \in  M_{\infty} \times M(\textup{VO})$.
\end{itemize}
Moreover, the above representations are pairwise not unitarily equivalent.
\end{thm}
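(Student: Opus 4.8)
The plan is to realize $A := \boldsymbol{\mathcal{T}}_{\lambda}(L^{\infty}_{k\textup{-}qr}, {\rm VO}_{\partial}(\mathbb{B}^{n-\ell}))$ through the short exact sequence attached to its ideal of compact operators and to apply the dichotomy of \cite[Proposition 2.11.2]{Di}, just as in the proof of Theorem~\ref{th:rep_list}. Put
\[\boldsymbol{\mathcal{J}} := A \cap \boldsymbol{\mathcal{K}}(\mathcal{A}_{\lambda}^2(\mathbb{B}^n)).\]
By Lemma~\ref{le:comp_k-qr_VO} its elements are exactly the operators $\bigoplus_{\rho} I_{\mathscr{H}_{\rho}} \otimes K_{\rho}$ with $K_{\rho}$ compact and $\|K_{\rho}\| \to 0$; thus $\bigoplus_{\rho} I_{\mathscr{H}_{\rho}} \otimes K_{\rho} \mapsto (K_{\rho})_{\rho \in \mathbb{Z}_+^m}$ identifies $\boldsymbol{\mathcal{J}}$ with the $c_0$-direct sum of the elementary algebras $\mathcal{K}(\mathcal{A}^2_{\lambda+|\rho|+\ell}(\mathbb{B}^{n-\ell}))$. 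On the other hand, the computation preceding the theorem identifies the quotient $A/\boldsymbol{\mathcal{J}}$ with the commutative algebra $C(\Diamond)$, where $\Diamond = (M_{\infty} \times M(\textup{VO})) \cup (\mathbb{Z}_+^m \times M_{\partial})$ is the set from \eqref{diamond}---a \emph{disjoint} union, since $M_{\infty} \cap \mathbb{Z}_+^m = \emptyset$---and the isomorphism carries the class of $\boldsymbol{T}^{\lambda}$ to the restriction $c|_{\Diamond}$ of its symbol.

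By \cite[Proposition 2.11.2]{Di} any irreducible representation $\pi$ of $A$ either annihilates $\boldsymbol{\mathcal{J}}$, hence factors through $A/\boldsymbol{\mathcal{J}} \cong C(\Diamond)$, or does not, in which case $\pi|_{\boldsymbol{\mathcal{J}}}$ is irreducible and $\pi$ is its unique irreducible extension to $A$. First I would dispose of the annihilating case. An irreducible representation of the commutative algebra $C(\Diamond)$ is one dimensional and given by evaluation at a point of $\Diamond$; unwinding the quotient isomorphism, evaluation at $(\mu, z'') \in M_{\infty} \times M(\textup{VO})$ is the map $\boldsymbol{T}^{\lambda} \mapsto c(\mu, z'')$, i.e.~$\nu_{\mu,z''}$ of type (iii), while evaluation at $(\rho, \nu) \in \mathbb{Z}_+^m \times M_{\partial}$ is $\boldsymbol{T}^{\lambda} \mapsto c(\rho, \nu)$, i.e.~$\pi_{\rho,\nu}$ of type (ii). That these evaluations do not depend on the non-unique choice in \eqref{eq:big-repr} (recall \eqref{eq:0-op}) is precisely the content of the symbol-recovery procedure recorded after Theorem~\ref{th:big-repr}, which fixes $c$ on $\Diamond$. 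Hence the representations killing $\boldsymbol{\mathcal{J}}$ are exactly families (ii) and (iii).

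Next I would treat the representations not vanishing on $\boldsymbol{\mathcal{J}}$. The irreducible representations of a $c_0$-direct sum of elementary $C^*$-algebras are the identity representations of the individual summands; since the identity representation is the only irreducible representation of each $\mathcal{K}(\mathcal{A}^2_{\lambda+|\rho|+\ell}(\mathbb{B}^{n-\ell}))$, the irreducible representations of $\boldsymbol{\mathcal{J}}$ are exactly the maps $\textup{id}_{\rho} : \bigoplus_{\kappa} I_{\mathscr{H}_{\kappa}} \otimes K_{\kappa} \mapsto K_{\rho}$ acting on $\mathcal{A}^2_{\lambda+|\rho|+\ell}(\mathbb{B}^{n-\ell})$, one for each $\rho \in \mathbb{Z}_+^m$. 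I would then check that $\iota_{\rho}$ is the unique irreducible extension of $\textup{id}_{\rho}$: the $\rho$-th component $T_{\rho} = T_{c(\rho,z'')}^{\lambda+|\rho|+\ell} + K_{\rho}$ is well defined independently of the ambiguity \eqref{eq:0-op}, so $\iota_{\rho}$ is a bona fide representation; it restricts to $\textup{id}_{\rho}$ on $\boldsymbol{\mathcal{J}}$; its image contains $\mathcal{K}(\mathcal{A}^2_{\lambda+|\rho|+\ell}(\mathbb{B}^{n-\ell}))$ by the construction in the proof of Lemma~\ref{le:comp_k-qr_VO} and is therefore irreducible; and uniqueness is the standard fact that an irreducible representation is determined by its restriction to a non-annihilated ideal. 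Thus family (i) accounts for all representations that survive on $\boldsymbol{\mathcal{J}}$.

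Finally I would verify pairwise inequivalence, the most case-heavy but least conceptual step. The type (i) representations are infinite dimensional and types (ii), (iii) are one dimensional, so no representation of type (i) is equivalent to one of type (ii) or (iii). For $\rho_1 \neq \rho_2$ the representations $\iota_{\rho_1}, \iota_{\rho_2}$ restrict on $\boldsymbol{\mathcal{J}}$ to $\textup{id}_{\rho_1}, \textup{id}_{\rho_2}$, which are supported on different summands and hence inequivalent, so $\iota_{\rho_1} \not\cong \iota_{\rho_2}$. The one-dimensional representations (ii) and (iii) are exactly the characters of $C(\Diamond)$ and therefore separate the points of $\Diamond$; as the two pieces of $\Diamond$ are disjoint and each parametrizes distinct points, all of (ii) and (iii) are distinct characters, hence pairwise inequivalent. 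I expect the main obstacle to lie not in this classification scheme---which is driven entirely by the preparatory results (Lemma~\ref{le:comp_k-qr_VO}, Theorem~\ref{th:big-repr}, Corollary~\ref{co:comp} and the Calkin-algebra computation)---but in the careful bookkeeping that matches the abstract point evaluations on $\Diamond$ with the concrete maps $\pi_{\rho,\nu}$ and $\nu_{\mu,z''}$, where the non-uniqueness \eqref{eq:0-op} of the representation \eqref{eq:big-repr} has to be neutralized through the symbol-recovery argument.
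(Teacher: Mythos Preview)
Your proposal is correct and follows essentially the same route as the paper: both apply the dichotomy of \cite[Proposition~2.11.2]{Di} to the ideal $\boldsymbol{\mathcal{J}}=A\cap\boldsymbol{\mathcal{K}}(\mathcal{A}_{\lambda}^2(\mathbb{B}^n))$, invoke Lemma~\ref{le:comp_k-qr_VO} to identify the irreducible representations surviving on $\boldsymbol{\mathcal{J}}$ with the $\iota_{\rho}$, and read off the one-dimensional representations (ii) and (iii) from the Calkin-algebra computation preceding the theorem. One small bookkeeping remark: the set you call $\Diamond$ and describe as the disjoint union $(M_{\infty}\times M(\textup{VO}))\cup(\mathbb{Z}_+^m\times M_{\partial})$ is indeed the correct parameter space for the characters, but note that in \eqref{diamond} the paper writes $\Diamond=(M_{\infty}\times M(\textup{VO}))\cup(M(\textup{SO})\times M_{\partial})$, which is the same set presented with overlapping pieces; your disjoint presentation is the one actually used in the Calkin computation just before the theorem.
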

\begin{proof}
By the discussion above, it remains to show that the list is complete. We may proceed as in the proof of Theorem \ref{th:rep_list}. By \cite[Proposition 2.11.2]{Di}, every irreducible representation of 
$\boldsymbol{\mathcal{T}}_{\lambda}(L^{\infty}_{k\textup{-}qr},{\rm VO}_{\partial}(\mathbb{B}^{n-\ell}))$ is either induced by a representation of the quotient
\begin{equation}\label{Quotient_Main_theorem_Section_6}
\boldsymbol{\mathcal{T}}_{\lambda}\big{(}L^{\infty}_{k\textup{-}qr},{\rm VO}_{\partial}(\mathbb{B}^{n-\ell})\big{)} / \big{(}\boldsymbol{\mathcal{T}}_{\lambda}\big{(}L^{\infty}_{k\textup{-}qr},{\rm VO}_{\partial}(\mathbb{B}^{n-\ell})) \cap \boldsymbol{\mathcal{K}}(\mathcal{A}_{\lambda}^2(\mathbb{B}^n))\big{)}
\end{equation}
or extends an irreducible representation of 
\begin{equation}\label{Ideal_Main_theorem_Section_6}
\boldsymbol{\mathcal{T}}_{\lambda}(L^{\infty}_{k\textup{-}qr},{\rm VO}_{\partial}(\mathbb{B}^{n-\ell})) \cap \boldsymbol{\mathcal{K}}(\mathcal{A}_{\lambda}^2(\mathbb{B}^n)). 
\end{equation}
Since by our previous remark (\ref{Quotient_Main_theorem_Section_6}) is isometrically isomorphic to 
\begin{equation*}
C\big{(}M_{\infty} \times  M(\textup{VO}) \cup \mathbb{Z}_+^m \times M_{\partial} \big{)},
\end{equation*}
the former representations are exactly $\nu_{\mu,z''}$ and $\pi_{\rho,\nu}$, respectively. It follows from Lemma \ref{le:comp_k-qr_VO} that every irreducible representation of 
(\ref{Ideal_Main_theorem_Section_6})  is induced by an irreducible representation of $\mathcal{K}(\mathcal{A}^2_{\lambda+|\rho|+\ell}(\mathbb{B}^{n-\ell}))$ for some $\rho \in \mathbb{Z}_+^m$ on the respective level. As the only irreducible representation of $\mathcal{K}(\mathcal{A}^2_{\lambda+|\rho|+\ell}(\mathbb{B}^{n-\ell}))$ is the identical representation, we conclude that the representations $\iota_{\rho}$ are the only additional representations we get.
\end{proof}

Similarly to the Fredholm characterization of Toeplitz operators at the end of Section~\ref{se:VO}, we have the following result.

\begin{prop}
 Let $c(\rho, z'') \in \left(\mathrm{SO}(\mathbb{Z}_+^m) \otimes {\rm VO}_{\partial}(\mathbb{B}^{n-\ell})\right) \otimes \mathrm{Mat}_p(\mathbb{C})$. Then the Toeplitz operator
\begin{equation*}
 \mathbf{T}^{\lambda} \asymp \bigoplus_{\rho \in \mathbb{Z}_+^m} I \otimes (T_c^{\lambda+|\rho|+\ell} + K_{\rho}) \in \boldsymbol{\mathcal{T}}_{\lambda}(L^{\infty}_{k\textup{-}qr},{\rm VO}_{\partial}(\mathbb{B}^{n-\ell})) \otimes \mathrm{Mat}_p(\mathbb{C})
\end{equation*}
is Fredholm if and only if the restriction of the matrix $c(\eta)$ onto $(M_{\infty} \times M(\textup{VO})) \cup (\mathbb{Z}_+^m \times  M_{\partial})$ is invertible. 
The essential spectrum of the operator $\mathbf{T}^{\lambda}$ is given by
\begin{equation*}
 \esssp \mathbf{T}^{\lambda} = \mathrm{Range}\,\det c|_{M_{\infty} \times   M(\textup{VO}) \cup \mathbb{Z}_+^m \times M_{\partial}}.
\end{equation*}
\end{prop}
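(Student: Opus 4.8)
The plan is to read both assertions off the Calkin algebra of $\boldsymbol{\mathcal{T}}_{\lambda}(L^{\infty}_{k\textup{-}qr},{\rm VO}_{\partial}(\mathbb{B}^{n-\ell}))$ computed just before Theorem~\ref{Section_6_complete_list_of_representations}, together with the elementary fact that a bounded operator is Fredholm precisely when its image in the Calkin algebra is invertible. Recall from Lemma~\ref{le:comp_k-qr_VO}, Corollary~\ref{co:comp} and that discussion that the quotient $\widehat{\boldsymbol{\mathcal{T}}}_{\lambda}(L^{\infty}_{k\textup{-}qr},{\rm VO}_{\partial}(\mathbb{B}^{n-\ell}))$ is isometrically isomorphic to $C(\Diamond)$ with $\Diamond$ as in \eqref{diamond}, the isomorphism sending the class of an operator of the form \eqref{eq:big-repr} to $c|_{\Diamond}$; this is well defined since the ambiguity \eqref{eq:0-op} changes $c$ only by an element of $c_0\otimes C_0(\mathbb{B}^{n-\ell})$, which vanishes on $\Diamond$.

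First I would amplify this picture by $\mathrm{Mat}_p(\mathbb{C})$. As $\mathrm{Mat}_p(\mathbb{C})$ is finite dimensional, hence nuclear, tensoring the defining short exact sequence by it stays exact, the compact operators on $H:=\mathcal{A}^2_{\lambda}(\mathbb{B}^n)\otimes\mathbb{C}^p$ are $\boldsymbol{\mathcal{K}}(\mathcal{A}^2_{\lambda}(\mathbb{B}^n))\otimes\mathrm{Mat}_p(\mathbb{C})$, and the compact ideal of the amplified algebra is the entrywise version of Lemma~\ref{le:comp_k-qr_VO}. Consequently the Calkin algebra of $\boldsymbol{\mathcal{T}}_{\lambda}(L^{\infty}_{k\textup{-}qr},{\rm VO}_{\partial}(\mathbb{B}^{n-\ell}))\otimes\mathrm{Mat}_p(\mathbb{C})$ is
\[
C(\Diamond)\otimes\mathrm{Mat}_p(\mathbb{C})\;\cong\;\mathrm{Mat}_p\bigl(C(\Diamond)\bigr)\;=\;C\bigl(\Diamond,\mathrm{Mat}_p(\mathbb{C})\bigr),
\]
with symbol map sending $\mathbf{T}^{\lambda}$ as in \eqref{eq:big-repr} to the matrix-valued function $c|_{\Diamond}$.

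Next I would apply Atkinson's theorem: $\mathbf{T}^{\lambda}$ is Fredholm iff its image in the full Calkin algebra $\mathcal{L}(H)/\boldsymbol{\mathcal{K}}(H)$ is invertible. Since a unital $C^*$-subalgebra is inverse closed (spectral permanence), this is equivalent to invertibility of $c|_{\Diamond}$ in $C(\Diamond,\mathrm{Mat}_p(\mathbb{C}))$, and a continuous matrix function is invertible there exactly when $c(\eta)$ is an invertible matrix for every $\eta\in\Diamond$, i.e.\ $\det c(\eta)\neq 0$ on $\Diamond$; this is the claimed Fredholm criterion. For the essential spectrum I would run the same criterion on $\mathbf{T}^{\lambda}-\zeta I$: here $\zeta\notin\esssp\mathbf{T}^{\lambda}$ iff $c(\eta)-\zeta I$ is invertible for all $\eta\in\Diamond$, i.e.\ iff $\det\bigl(c(\eta)-\zeta I\bigr)\neq0$ for all $\eta$, so that $\esssp\mathbf{T}^{\lambda}=\bigcup_{\eta\in\Diamond}\mathrm{sp}\,c(\eta)$, which in the scalar case $p=1$ reduces to $\mathrm{Range}\,(\det c)|_{\Diamond}=\mathrm{Range}\,c|_{\Diamond}$.

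The genuinely routine parts are the tensor-product exactness and the spectral-permanence step. The point requiring the most care is that the matrix symbol map $\mathbf{T}^{\lambda}\mapsto c|_{\Diamond}$ is well defined and isometric on the amplified Calkin algebra despite the non-uniqueness \eqref{eq:0-op} of the representation \eqref{eq:big-repr}; this is controlled exactly by the Berezin-transform recovery procedure established after Theorem~\ref{th:big-repr}, applied entrywise, which pins down $c|_{\Diamond}$ and hence makes the Fredholm criterion and the essential spectrum independent of the chosen representation.
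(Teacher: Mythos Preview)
Your argument is correct and is exactly the approach the paper has in mind: the proposition is stated without proof, prefaced by ``Similarly to the Fredholm characterization \ldots at the end of Section~\ref{se:VO}'', i.e.\ it is meant to follow immediately from the Calkin-algebra identification established just before Theorem~\ref{Section_6_complete_list_of_representations} together with Atkinson and spectral permanence, precisely as you spell out. Your remark that the essential-spectrum formula $\bigcup_{\eta\in\Diamond}\mathrm{sp}\,c(\eta)$ only literally matches the paper's displayed expression $\mathrm{Range}\,\det c|_{\Diamond}$ when $p=1$ is a fair reading; for $p>1$ the paper's formula should be understood in this eigenvalue sense rather than verbatim.
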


Finally, we give an index formula for Fredholm operators in the algebra \[\boldsymbol{\mathcal{T}}_{\lambda}(L^{\infty}_{k\textup{-}qr},{\rm VO}_{\partial}(\mathbb{B}^{n-\ell})) \otimes \mathrm{Mat}_p(\mathbb{C}).\]
Let the operator 
\begin{equation*}
 \mathbf{T}^{\lambda} \asymp \bigoplus_{\rho \in \mathbb{Z}_+^m} I \otimes (T_c^{\lambda+|\rho|+\ell} + K_{\rho})
\end{equation*}
be Fredholm. Then, by \cite[Section 2]{XZ}, for each $\rho \in \mathbb{Z}_+^m$ there is $s_{\rho,0} \in (0,1)$ such that for every $s_{\rho} \in (s_{\rho,0},1)$ we have
\begin{equation*}
 \Ind (T_c^{\lambda+|\rho|+\ell} + K_{\rho}) = \Ind (T_{c_{s_{\rho}}}^{\lambda+|\rho|+\ell} + K_{\rho}),
\end{equation*}
where $c_{s_{\rho}} = c_{s_{\rho}}(\rho, \cdot) \in C(\overline{\mathbb{B}}^{n-\ell})\otimes \mathrm{Mat}_p(\mathbb{C})$ is the matrix-valued function of type \eqref{eq:c_s}. Thus we have the following index formula
\begin{eqnarray*}
 \Ind \mathbf{T}^{\lambda} &=& \sum_{\rho \in \mathbb{Z}_+^m} \dim \mathscr{H}_{\rho} \times \Ind (T_c^{\lambda+|\rho|+\ell} + K_{\rho}) \\
 &=& \sum_{\rho \in \mathbb{Z}_+^m} \dim \mathscr{H}_{\rho} \times \Ind (T_{c_{s_{\rho}}}^{\lambda+|\rho|+\ell} + K_{\rho}),
\end{eqnarray*}
where the indices of the operators in the last line can be calculated by {\rm \cite[Theorem 1.5]{Venugopalkrishna72}}. \\ Note that the above infinite sum has, in fact, only a finite number of non-zero summands.
%%%%%%%%%%%%%%%%%%%%%%%%%%%%%%%%%%%%%%%%%%%%%%%%%%%%%%%%%%%%%%%%%%%%%%%%%%%%%%%%%%%%%%%%
%%%%%%%%%%%%%%%%%%%%%%%%%%%%%%%%%%%%%%%%%%%%%%%%%%%%%%%%%%%%%%%%%%%%%%%%%%%%%%%%%%%%%%%%

\end{document}